\DeclareMathOperator{\C}{\mathcal{C}}
\newtheorem{theorem}{Theorem}[section]
\newtheorem{lemma}[theorem]{Lemma}
\newtheorem{definition}[theorem]{Definition}
\newtheorem{proposition}[theorem]{Proposition}
\newtheorem{remark}[theorem]{Remark}
\newcommand{\fqn}{\mathbb{F}_{q^n}}
\newcommand{\cC}{{\mathcal C}}
\newcommand{\KK}{{\mathbb K}}
\newcommand{\fq}{{\mathbb F}_{q}}
\newcommand{\fqq}{{\mathbb F}_{q^4}}
\title{On a family of linear MRD codes with parameters $[8\times8,16,7]_q$}
\author[M. Timpanella]{Marco Timpanella}
\address{Marco Timpanella, \textnormal{Universit\`a degli Studi di Perugia, Dipartimento di Matematica e Informatica, Via Vanvitelli, 1, 06123 Perugia PG, Italy}}
\email{marco.timpanella@unipg.it}
\author[G. Zini]{Giovanni Zini}
\address{Giovanni Zini, \textnormal{Universit\`a degli Studi di Modena e Reggio Emilia, Dipartimento di Scienze Fisiche, Informatiche e Matematiche, Via Campi, 213/b Modena MO, Italy}}
\email{giovanni.zini@unimore.it}
\subjclass{14G50,11T06}
\keywords{MRD codes, linearized polynomials, algebraic varieties}
\begin{document}

\maketitle

\begin{abstract}
In this paper we consider a family $\mathcal{F}$ of $2n$-dimensional $\mathbb{F}_q$-linear rank metric codes in $\mathbb{F}_q^{n\times n}$ arising from polynomials of the form $x^{q^s}+\delta x^{q^{\frac{n}{2}+s}}\in\mathbb{F}_{q^n}[x]$. The family $\mathcal{F}$ was introduced by Csajb\'ok, Marino, Polverino and Zanella (2018) as a potential source for maximum rank distance (MRD) codes. Indeed, they showed that $\mathcal{F}$ contains MRD codes for $n=8$, and other subsequent partial results have been provided in the literature towards the classification of MRD codes in $\mathcal{F}$ for any  $n$. In particular, the classification has been reached when $n$ is smaller than $8$, and also for $n$ greater than $8$ provided that $s$ is small enough with respect to $n$.
In this paper we deal with the open case $n=8$, providing a classification for any large enough odd prime power $q$.
The techniques are from algebraic geometry over finite fields, since our strategy requires the analysis of certain $3$-dimensional $\mathbb{F}_q$-rational algebraic varieties in a $7$-dimensional projective space.
We also show that the MRD codes in $\mathcal{F}$ are not equivalent to any other MRD codes known so far.
\end{abstract}

\section{Introduction}
Rank metric codes over finite fields were introduced by Delsarte in its seminal paper \cite{Delsarte} in 1978, and increasingly studied since then.
The interest is mainly boosted by their applications in information theory, such as crisscross error correction \cite{Roth}, code-based cryptography \cite{GPT} and linear network coding \cite{SKK}, but also by the connections with other mathematical objects, such as semifields \cite{John} and linear sets \cite{ZZ}.

An \emph{$\fq$-linear rank metric code} $\mathcal{C}$ is an $\fq$-vector subspace of the space $\mathbb{F}_{q}^{m\times n}$ of $m\times n$ matrices over the finite field $\mathbb{F}_q$, equipped with the rank distance $d(A,B):={\rm rank}(A-B)$.
We denote the main parameters of $\mathcal{C}$ by $[m\times n,k,d]_q$, where $k$ is the $\fq$-dimension of $\mathcal{C}$ and $d$ is the minimum distance of $\mathcal{C}$, i.e. the minimum rank distance between two distinct elements of $\cC$.
The Singleton-like bound $k\leq \max\{m,n\}(\min\{m,n\}-d+1)$ holds; see \cite{Delsarte}. When equality holds, $\cC$ is called \emph{maximum rank distance} (MRD for short). Codes of this sort have particular interest because of the optimality of their parameters.
The first examples of MRD codes were constructed by Delsarte \cite{Delsarte} and independently by Gabidulin \cite{Gabidulin}, and are known as \emph{Gabidulin codes}.


We are interested in the square case $m=n$.
In this case, we can identify $\mathbb{F}_q^{n\times n}$ with the $\fq$-algebra $\mathcal{L}_{n,q}$ of $q$-polynomials over $\fqn$ of degree smaller than $q^n$, with composition modulo $x^{q^n}-x$; see \cite{PZ}. Thus, $\fq$-linear $[n\times n,k,d]_q$ rank metric codes $\mathcal{C}$ can be seen as $k$-dimensional $\fq$-vector subspaces of $\mathcal{L}_{n,q}$ with minimum rank distance $d$.
If in addition $\mathcal{C}$ is an $\fqn$-vector subspace of $\mathcal{L}_{n,q}$, we say that $\mathcal{C}$ is $\fqn$-linear.

Most of the known families of $\fq$-linear MRD codes are indeed $[n\times n,2n,d]_q$ $\fqn$-linear codes $\mathcal{C}_f$ for some $q$-polynomial $f(x)\in\mathcal{L}_{n,q}$, where
\[
\mathcal{C}_{f}:=\langle x,f(x)\rangle_{\fqn}=\left\{ax+bf(x)\colon a,b\in\fqn\right\}\subseteq\mathcal{L}_{n,q};
\]
see e.g. \cite{NSZ} and the references therein.
Such codes are strictly connected with so-called linear sets of the projective line ${\rm PG}(1,q^n)$ over $\fqn$. When $\mathcal{C}_f$ is MRD, the polynomial $f(x)$ is said to be \emph{scattered} and defines a \emph{scattered linear set} in ${\rm PG}(1,q^n)$; see \cite{PZ}.

Csajb\'ok et al. \cite{CMPZ} introduced for $n$ even the family of rank metric codes $\mathcal{C}_{\delta,s}:=\mathcal{C}_{f_{\delta,s}}$ with
\[
f_{\delta,s}(x):= x^{q^s}+\delta x^{q^{\frac{n}{2}+s}}\in\mathcal{L}_{n,q},
\]
where $\delta\in\mathbb{F}_{q^n}$, $\delta\ne0$, and $s\in\{1,\ldots,n-1\}$ is coprime with $n/2$.
Whenever $\delta^{1+q^{n/2}}\ne1$, the minimum distance of $\mathcal{C}_{\delta,s}$ is large: either $d=n-1$, i.e. $\mathcal{C}_{\delta,s}$ is MRD; or $d=n-2$, i.e. $\mathcal{C}_{\delta,s}$ is as close as possible to being MRD. In the latter case $\mathcal{C}_{\delta,s}$ is called Almost MRD, in analogy with Almost MDS codes in the Hamming metric; see \cite[Definition 3.1]{DLC}.

Several papers have provided partial results towards the classification of MRD codes of type $\mathcal{C}_{\delta,s}\subseteq\mathcal{L}_{n,q}$.
A sufficient condition was given in the same paper \cite{CMPZ} when $n=8$ and $q$ is odd: if $\delta^{1+q^4}=-1$, then $\mathcal{C}_{\delta,s}\subseteq\mathcal{L}_{8,q}$ is MRD.
For smaller values of $n$, a characterization of MRD codes $\mathcal{C}_{\delta,s}$ is known; the case $n=6$ has been dealt with in \cite{BCM} and \cite{PZnumber}.
For higher values of $n$, a characterization of MRD codes $\mathcal{C}_{\delta,s}$ has been obtained in \cite{PZZ} when $n$ is large enough, namely when
\[
n\geq\begin{cases}
        8s+4 & \mbox{if }q=3\mbox{ and }s>1,\mbox{ or }q=2\mbox{ and }s>2;\\
        8s+2 & \mbox{otherwise}.
\end{cases}
\]
For instance, this rules out any $n\geq10$ when $s=1$.

In this paper, we focus on the open case $n=8$.
Our main result is the characterization of the MRD codes of type $\mathcal{C}_{\delta,s}\subseteq\mathcal{L}_{8,q}$ under the assumption that $q$ is odd and large enough.
We also show that such MRD codes are not equivalent to any other known ones.
Our results are summarized as follows (the notions of equivalence and idealisers will be given in Section \ref{sec:prelim}).

\begin{theorem}\label{th:main}
Let $q\geq 1039891$ be odd, $\delta\in\mathbb{F}_{q^8}^*$, $s\in\{1,3,5,7\}$, and $f_{\delta,s}=x^{q^s}+\delta x^{q^{4+s}}\in\mathcal{L}_{8,q}$.
Then
\[
\mathcal{C}_{\delta,s}=\langle x,\,f_{\delta,s}(x)\rangle_{\mathbb{F}_{q^8}} \subseteq \mathcal{L}_{8,q}
\]
is MRD if and only if $\delta^{1+q^4}=-1$.

The $[8\times8,16,7]_q$ MRD codes $\mathcal{C}_{\delta,s}$ have left idealiser isomorphic to $\mathbb{F}_{q^8}$, right idealiser isomorphic to $\mathbb{F}_{q^4}$, and are not equivalent to the other MRD codes known so far in the literature.
\end{theorem}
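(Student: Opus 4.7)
The overall plan is to handle the two directions of the equivalence, then compute the idealisers, and finally establish inequivalence with the known families. The ``if'' direction, namely that $\delta^{1+q^4}=-1$ forces $\cC_{\delta,s}$ to be MRD, is already available in the work of Csajb\'ok--Marino--Polverino--Zanella and requires no further argument, so the entire effort goes into the converse statement and into the invariants.

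For the ``only if'' direction I would argue by contrapositive. The starting point is the standard characterisation: $\cC_{\delta,s}$ is MRD exactly when $f_{\delta,s}$ is a scattered $q$-polynomial, i.e.\ when no $\mathbb{F}_q$-linearly independent pair $(x_1,x_2)\in(\mathbb{F}_{q^8}^*)^2$ satisfies $x_2 f_{\delta,s}(x_1)=x_1 f_{\delta,s}(x_2)$. Assume $\delta^{1+q^4}\ne -1$; the goal is to produce such a pair for every $q\geq 1039891$. I would encode this identity, after substituting $f_{\delta,s}(x)=x^{q^s}+\delta x^{q^{s+4}}$ and adjoining the Frobenius conjugates of $x_1,x_2$ as extra coordinates, as an algebraic system with coefficients in $\mathbb{F}_q$. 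After clearing the $\mathbb{F}_q^*$-scaling and eliminating Frobenius-diagonal symmetries, the associated projective variety $\mathcal{V}_{\delta,s}\subset\mathbb{P}^7$ should be $\mathbb{F}_q$-rational of dimension $3$, matching the setup announced in the abstract; the witness pairs correspond precisely to its non-trivial $\mathbb{F}_q$-points.

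The key step, and the main obstacle, is to prove that $\mathcal{V}_{\delta,s}$ contains an absolutely irreducible $\mathbb{F}_q$-rational component $\mathcal{W}$ of dimension $3$ whose $\mathbb{F}_q$-points are not entirely accounted for by the trivial locus of $\mathbb{F}_q$-proportional or Galois-twisted pairs. Absolute irreducibility typically requires a Jacobian-criterion analysis of the singular locus together with a case-by-case exclusion of possible factorisations over $\overline{\mathbb{F}_q}$; it is precisely here that the hypothesis $\delta^{1+q^4}\ne -1$ should be used to prevent degeneration. The four admissible values $s\in\{1,3,5,7\}$, i.e.\ those with $\gcd(s,4)=1$, I would try to reduce to a single one (say $s=1$) via a Galois twist by $x\mapsto x^q$. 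Once $\mathcal{W}$ is in hand, an application of the Lang--Weil bound of the form
\[
|\mathcal{W}(\mathbb{F}_q)| \;\geq\; q^3 - C q^{5/2},
\]
for an explicit $C$ depending only on the defining degrees, together with a count of size $O(q^2)$ of the trivial locus, produces a non-trivial $\mathbb{F}_q$-point as soon as $q$ exceeds an explicit threshold. The precise bound $q\geq 1039891$ is what I would obtain by carefully tracking the value of $C$ and the constants in the trivial count.

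For the last two assertions of the theorem, the idealiser computations are standard: left-linearity of $\cC_{\delta,s}$ over $\mathbb{F}_{q^8}$ together with the fact that $\cC_{\delta,s}$ has $\mathbb{F}_{q^8}$-dimension $2$ pin down the left idealiser as $\mathbb{F}_{q^8}$, while a direct monomial comparison starting from $f_{\delta,s}(x)$ (exploiting that the two exponents $q^s$ and $q^{s+4}$ differ by $q^s(q^4-1)$) gives the right idealiser as $\mathbb{F}_{q^4}$. The inequivalence with the other known MRD codes in $\mathcal{L}_{8,q}$ is then handled by passing through the standard list of $\mathbb{F}_{q^8}$-linear MRD families (Gabidulin, generalised and twisted Gabidulin, Trombetti--Zhou, additive generalised twisted Gabidulin, and the recently constructed scattered-polynomial families), comparing the pair of idealiser orders $(q^8,q^4)$, which is an equivalence invariant, and, for those families that happen to share these invariants, ruling out $\Gamma\mathrm{L}(2,\mathbb{F}_{q^8})$-equivalence by a direct comparison of defining polynomials.
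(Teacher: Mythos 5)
Your high-level architecture (reduce to $s=1$, encode the failure of the MRD property as the existence of an $\mathbb{F}_q$-point on a $3$-dimensional variety in $\mathbb{P}^7$, exhibit an absolutely irreducible $\mathbb{F}_q$-rational component, apply Lang--Weil) matches the paper's, and your treatment of the ``if'' direction, the left idealiser, and the inequivalence-by-invariants plan are all in order. But there are two genuine gaps. First, your variety is not set up correctly. Directly homogenising $x_2 f_{\delta,s}(x_1)=x_1 f_{\delta,s}(x_2)$ over $\mathbb{F}_q$ gives $16$ affine coordinates and $8$ equations, i.e.\ a variety in $\mathbb{P}^{15}$ of dimension at least $7$, which moreover contains the $q+1$ linear components $x_2=\lambda x_1$ ($\lambda\in\mathbb{F}_q\cup\{\infty\}$) of that same maximal dimension; no amount of ``clearing scalings'' turns this into a $3$-fold in $\mathbb{P}^7$, and separating a useful component from the trivial ones in this model is not addressed. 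The paper's $\mathbb{P}^7$ variety arises from a different object entirely: the existence of a $2$-dimensional kernel is first reduced (via \cite[Theorem 3.6]{PZZ}) to a system in auxiliary quantities $T,S,A,B\in\mathbb{F}_{q^4}$, then to a single plane curve $\mathcal{X}_{\delta,s}$ over $\mathbb{F}_{q^4}$ in two variables, and only then Weil-restricted to $\mathbb{F}_q$ ($2\times 4=8$ coordinates, $4$ equations, dimension $3$). Without that intermediate reduction your starting variety is the wrong one.

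Second, and more seriously, the step you yourself flag as ``the key step, and the main obstacle'' --- producing the absolutely irreducible $\mathbb{F}_q$-rational component --- is exactly where all the work of the paper lives, and you give no argument for it. The paper does this by passing to a projectively equivalent variety $\mathcal{V}$ (via a Moore-matrix change of coordinates), computing $\dim\mathcal{V}=3$ and $\deg\mathcal{V}=16$, and then splitting into the cases $\beta^{2q}\ne\beta^2$ and $\beta^{2q}=\beta^2$ (where $\beta$ is a function of $\mathrm{N}_{q^8/q^4}(\delta)$, not of $\delta^{1+q^4}\ne-1$ directly as you suggest). Each case requires a tower of Kummer extensions of function fields, Laurent-series expansions at infinite places, and resultant eliminations to show a certain quartic is not a square --- this is several pages of computation and is not replaceable by a generic ``Jacobian criterion plus case analysis of factorisations.'' A smaller but real gap: for the right idealiser you must rule out $R(\mathcal{C}_{\delta,s})\cong\mathbb{F}_{q^8}$ (Proposition \ref{prop:idealisers} only leaves $m\in\{4,8\}$ once you exhibit $\mathbb{F}_{q^4}$ inside it); the paper does this by invoking the classification of MRD codes with both idealisers maximal and the known non-equivalence of $U_{\delta,s}$ with $U_{x^{q^t}}$, and a ``direct monomial comparison'' alone does not close this.
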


Notice that Theorem \ref{th:main} is a partial answer to Conjecture 4.6 in \cite{PZZ}.
The techniques that we use to prove the first part of Theorem \ref{th:main} are from algebraic geometry over finite fields.
In particular, our starting point is an $\mathbb{F}_{q^{n/2}}$-rational plane curve $\mathcal{X}_{\delta,s}$ introduced in \cite{PZZ}; see Equation \eqref{eq:curveqodd_withs} below. For $n\geq10$, the degree of $\mathcal{X}_{\delta,s}$ is low with respect to the size of $\mathbb{F}_{q^{n/2}}$, and hence the Hasse-Weil lower bound on the number of $\mathbb{F}_{q^{n/2}}$-rational points of $\mathcal{X}_{\delta,s}$ was enough in \cite{PZZ} to deduce results on the MRD property for $\mathcal{C}_{\delta,s}$.
For $n=8$ this is not sufficient. We then translate the MRD property for $\mathcal{C}_{\delta,s}$ into the estimate of the number of $\mathbb{F}_{q}$-rational points of another $\mathbb{F}_{q}$-rational algebraic variety $\mathcal{W}$ of low degree. We investigate the absolutely irreducible components of $\mathcal{W}$ and apply the Lang-Weil lower bound.
In the study of MRD codes $\mathcal{C}_f$, similar algebraic geometric arguments have been already used in the literature, but only to deal with algebraic curves or hypersurfaces; see for instance \cite{MontanucciZanella}.
On the contrary, $\mathcal{W}$ turns out to be a $3$-dimensional variety in a $7$-dimensional projective space.

The paper is organized as follows.
Section \ref{sec:prelim} contains some preliminary results: after some generalities on algebraic varieties (Section \ref{sec:alggeom}) and codes (Section \ref{sec:codes}), we present what is known about codes $\mathcal{C}_{\delta,s}$ (Section \ref{sec:ourcodes}) and the curves $\mathcal{X}_{\delta,s}$ (Section \ref{sec:curve}).
Section \ref{sec:MRD} proves the first part of Theorem \ref{th:main} about the characterization of MRD codes $\mathcal{C}_{\delta,s}$; the proof of some technical lemmas is postponed to Section \ref{sec:lemmaqodd}.
Section \ref{sec:parequiv} completes the proof of Theorem \ref{th:main} with the investigation of parameters and equivalences.
Finally, we list in \ref{sec:open} some open problems on codes $\mathcal{C}_{\delta,s}$.

\section{Preliminaries}\label{sec:prelim}

For the rest of the paper, $q$ is an odd prime power.

\subsection{Algebraic varieties}\label{sec:alggeom}

We recall some basic facts on algebraic varieties, and refer the reader to \cite{Sha} and \cite{Sti} for a detailed introduction to varieties and function fields. A good reference for the applications of algebraic geometric techniques to polynomials over finite field is \cite{Bartoli}.

We denote by $\mathbb{K}=\overline{\mathbb{F}}_q$ the algebraic closure of $\fq$, and by $\mathbb{P}^N=\mathbb{P}^N(\mathbb{K})$ the $N$-dimensional projective space over $\mathbb{K}$.
We will be interested in determining \emph{dimension} and \emph{degree} of  \emph{varieties} $\mathcal{V}$, i.e. of projective, possibly reducible, algebraic varieties $\mathcal{V}\subseteq\mathbb{P}^N$.
A variety $\mathcal{V}$ which is irreducible over $\KK$, is said to be \emph{absolutely irreducible}, and can be studied through its \emph{function field} $\KK(\mathcal{V})$ over $\KK$.
If the ideal of $\mathcal{V}$ is generated by polynomials over $\fq$, we say that $\mathcal{V}$ is \emph{defined over $\fq$}, or simply \emph{$\fq$-rational}.
When $\mathcal{V}$ is both $\fq$-rational and absolutely irreducible, $\fq(\mathcal{V})$ is its function field over $\fq$.

The following proposition recalls some elementary facts on the dimension of a variety.

\begin{proposition}\label{prop:sha}
Let $\mathcal{V},\mathcal{V}_1,\mathcal{V}_2$ be algebraic varieties in $\mathbb{P}^N$.
\begin{itemize}
    \item[(i)] \cite[Sec. 6.2, Cor. 5]{Sha} If $\mathcal{V}$ is defined by $r$ equations, then the dimension of $\mathcal{V}$ is at least $N-r$.
    \item[(ii)] \cite[Sec. 6.2, Cor. 4]{Sha} If $s$ is the maximum dimension of a linear space of $\mathbb{P}^N$ disjoint from $\mathcal{V}$, then the dimension of $\mathcal{V}$ is $N-s-1$.
    \item[(iii)] \cite[Sec. 6.2, Th. 6]{Sha}
    If $\mathcal{V}_1$ and $\mathcal{V}_2$ are absolutely irreducible and have dimension $m_1$ and $m_2$ respectively, then any non-empty component of $\mathcal{V}_1\cap\mathcal{V}_2$ has dimension at least $m_1+m_2-N$.
\end{itemize}
\end{proposition}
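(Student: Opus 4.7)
The proposition collects three standard dimension-theoretic facts on projective varieties; each part is cited verbatim from Shafarevich's monograph, so the plan is essentially to refer to that source. Nevertheless, let me indicate how I would reconstruct each argument if a self-contained proof were required.

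For part (i), I would proceed by induction on the number $r$ of defining equations. The base case $r=0$ is immediate since $\mathcal{V}=\mathbb{P}^N$ has dimension $N$. For the inductive step, I peel off one hypersurface $H$ at a time from the defining system: if $\mathcal{W}$ has dimension $d$ and $H$ does not contain $\mathcal{W}$, then every component of $\mathcal{W}\cap H$ has dimension exactly $d-1$ by Krull's Hauptidealsatz; if instead $H\supseteq\mathcal{W}$, the dimension is preserved. Iterating $r$ times yields $\dim\mathcal{V}\geq N-r$.

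For part (ii), I would first note that any linear subspace $L\subseteq\mathbb{P}^N$ of dimension $N-\dim\mathcal{V}$ must meet $\mathcal{V}$, which follows immediately from (iii) applied to $\mathcal{V}$ and $L$; this forces $s\leq N-\dim\mathcal{V}-1$. The matching lower bound on $s$ requires exhibiting a disjoint linear space of dimension exactly $N-\dim\mathcal{V}-1$, which I would construct by a generic choice of hyperplanes, inductively arranged to avoid each of the finitely many irreducible components of $\mathcal{V}$ (a standard avoidance argument, feasible since over an infinite field the complement of finitely many proper subvarieties of the parameter space of hyperplanes is nonempty).

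Part (iii) is the subtlest of the three and is where I expect the main obstacle. The standard strategy reduces to (i) via the diagonal trick: realise $\mathcal{V}_1\cap\mathcal{V}_2$ as the intersection of $\mathcal{V}_1\times\mathcal{V}_2$, which has dimension $m_1+m_2$, with the diagonal $\Delta\subseteq\mathbb{P}^N\times\mathbb{P}^N$. Locally the diagonal is cut out by $N$ equations, so combining this with (i) applied on a suitable affine chart, each non-empty component of the intersection has dimension at least $(m_1+m_2)-N$. The delicate point is that $\Delta$ is not a global complete intersection in $\mathbb{P}^N\times\mathbb{P}^N$, so the bound must first be obtained locally on an affine cover and then globalized; this is the one step that goes beyond purely formal dimension counting, and is precisely why the conclusion of (iii) is stated only for absolutely irreducible $\mathcal{V}_1,\mathcal{V}_2$.
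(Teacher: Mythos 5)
The paper offers no proof of this proposition at all: all three parts are quoted directly from Shafarevich, so the ``paper's approach'' is pure citation, and your plan of deferring to that source together with sketches of the standard arguments is exactly in the spirit of what the authors do. Your reconstructions of (i) (iterated Hauptidealsatz) and (iii) (the diagonal trick, reducing to $N$ local equations on affine charts of $\mathbb{P}^N\times\mathbb{P}^N$) are the textbook proofs, and indeed the ones Shafarevich gives.

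One step in your sketch of (ii) does not follow as you state it. You claim that a linear space $L$ of dimension $N-\dim\mathcal{V}$ must meet $\mathcal{V}$ ``immediately from (iii)''; but (iii) only bounds the dimension of the \emph{non-empty} components of an intersection and says nothing about whether the intersection is non-empty in the first place. The non-emptiness is the projective dimension theorem, which is genuinely stronger and is usually proved by passing to the affine cones $C(\mathcal{V}), C(L)\subseteq\mathbb{A}^{N+1}$: both contain the origin, so by the affine analogue of (iii) their intersection has dimension at least $(\dim\mathcal{V}+1)+(N-\dim\mathcal{V}+1)-(N+1)=1$ at the origin, hence contains a point other than the origin, which projectivizes to a point of $\mathcal{V}\cap L$. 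With that substitution your argument for the inequality $s\leq N-\dim\mathcal{V}-1$ is correct, and the complementary generic-avoidance construction of a disjoint linear space of dimension $N-\dim\mathcal{V}-1$ is fine. Since the proposition is a cited standard fact, this is a gap in exposition rather than in the paper's logic, but it is worth being precise about which statement carries the existence content.
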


An estimate for the number of $\fq$-rational points of an $\fq$-rational absolutely irreducible variety is provided by the Lang-Weil bound, which is a generalization to higher dimension of the Hasse-Weil bound for curves.
We will use the Lang-Weil bound in the following improved version, due to Cafure and Matera.

\begin{proposition}\cite[Theorem 7.1]{CafureMatera}\label{Th:CafureMatera}
Let $\mathcal{V}\subseteq\mathbb{P}^N$ be an absolutely irreducible variety defined over $\mathbb{F}_q$, of dimension $m$ and degree $d$.
Let $A_q$ be the number of $\fq$-rational affine points of $\mathcal{V}$.
If $q>2(m+1)d^2$, then 
$$|A_q-q^m|\leq (d-1)(d-2)q^{m-\frac{1}{2}}+5d^{\frac{13}{3}} q^{m-1}.$$
\end{proposition}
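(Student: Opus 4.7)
The plan is to follow the strategy of Cafure and Matera, a quantitatively refined form of the Lang--Weil argument: reduce point-counting on the $m$-dimensional variety $\mathcal{V}$ to point-counting on a generic $\mathbb{F}_q$-rational linear section of dimension $1$, apply the Hasse--Weil bound there, and control the ``bad'' linear sections via an effective Bertini-type irreducibility theorem. The hypothesis $q > 2(m+1)d^2$ plays two roles: it guarantees the existence of good $\mathbb{F}_q$-rational linear sections, and it forces the Bertini-type error term to be smaller than the Hasse--Weil error term.

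First I would apply a Noether-normalization step: after a generic $\mathbb{F}_q$-rational linear change of coordinates in $\mathbb{P}^N$ (available because $q$ is large with respect to $d$ and $m$), $\mathcal{V}$ admits a finite birational projection onto an affine hypersurface $\mathcal{H}\subseteq\mathbb{A}^{m+1}$ of degree $d$. The $\mathbb{F}_q$-rational affine points of $\mathcal{V}$ differ from those of $\mathcal{H}$ only over a proper $\mathbb{F}_q$-rational subvariety of strictly lower dimension, whose contribution is absorbed by a crude bound of order $q^{m-1}$. This reduces the problem to estimating $\#\mathcal{H}(\mathbb{F}_q)$. Next, express the point count of $\mathcal{H}$ as an average of point counts of plane sections via a double-counting identity: for a generic $\mathbb{F}_q$-rational affine plane $\Pi\subseteq\mathbb{A}^{m+1}$, the intersection $\mathcal{C}_\Pi := \mathcal{H}\cap\Pi$ is an absolutely irreducible plane $\mathbb{F}_q$-rational curve of degree at most $d$. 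Its arithmetic genus is bounded by $(d-1)(d-2)/2$ (Pl\"ucker's formula applied to the projective closure), and the Hasse--Weil bound gives
$$\left|\#\mathcal{C}_\Pi(\mathbb{F}_q) - (q+1)\right| \leq (d-1)(d-2)\sqrt{q}.$$
Averaging over all good $\Pi$ in the parameter space (a Grassmannian with known $\mathbb{F}_q$-cardinality) yields the main term $q^m$ plus an error of order $(d-1)(d-2)q^{m-1/2}$.

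The main obstacle is bounding the contribution of the \emph{bad} planes $\Pi$, i.e. those for which $\mathcal{C}_\Pi$ fails to be absolutely irreducible. Here one needs an effective Bertini irreducibility statement with explicit constants: the set of bad $\Pi$ forms a subvariety of the Grassmannian of affine planes of codimension at least $2$, cut out by polynomial conditions of degree bounded polynomially in $d$. Combining this codimension-$2$ estimate with the trivial bound $\#\mathcal{C}_\Pi(\mathbb{F}_q)\leq dq$ on bad sections, and carefully tracking the explicit constants arising from the Chow form of $\mathcal{V}$ (which is the ultimate source of the peculiar exponent $13/3$), the bad contribution is absorbed into the error $5d^{13/3}q^{m-1}$. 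Assembling the good and bad contributions and using $q > 2(m+1)d^2$ to ensure both that the good locus is non-empty over $\mathbb{F}_q$ and that $(d-1)(d-2)q^{m-1/2}$ remains the dominant error term yields the stated inequality.
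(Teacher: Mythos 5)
This statement is not proved in the paper at all: Proposition \ref{Th:CafureMatera} is quoted verbatim, with citation, from Cafure and Matera \cite[Theorem 7.1]{CafureMatera} and is used as a black box in the proof of Theorem \ref{th:geomfin}. So there is no internal proof to compare your attempt against; the only meaningful comparison is with the cited source. At the level of strategy, your outline is a faithful reconstruction of what Cafure and Matera actually do: a generic $\mathbb{F}_q$-rational linear projection mapping $\mathcal{V}$ birationally onto a hypersurface of degree $d$ in $\mathbb{A}^{m+1}$ (with the discrepancy supported on a lower-dimensional subvariety and absorbed into the $q^{m-1}$ term), followed by the effective Lang--Weil argument for hypersurfaces: average over plane sections, apply Weil's bound to the absolutely irreducible sections, and control the bad sections by an effective first Bertini theorem with explicit degree bounds, which is indeed where the constant $5d^{13/3}$ originates.

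That said, as written this is a roadmap rather than a proof: every quantitatively hard step is asserted, not derived, and two assertions are off. First, the effective Bertini input in Cafure--Matera shows that the bad planes are contained in a \emph{hypersurface} of the parameter space whose degree is bounded explicitly in $d$; codimension one is both what is proved and all the averaging argument needs, whereas your claim of codimension at least $2$ is neither established by your sketch nor the mechanism that produces the $d^{13/3}q^{m-1}$ term (that comes from the degree bound on the bad locus times the trivial bound $\leq dq$ on points of a degree-$d$ plane curve). Second, the plane sections $\mathcal{C}_\Pi$ are in general singular, so the smooth-curve Hasse--Weil bound with genus $(d-1)(d-2)/2$ does not apply directly; one needs the version for absolutely irreducible, possibly singular plane curves (Aubry--Perret/Leep--Yeomans type), whose extra $O(d^2)$ correction terms must also be absorbed into the $q^{m-1}$ error. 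Filling these in, together with the explicit bookkeeping that shows $q>2(m+1)d^2$ suffices for the existence of a good $\mathbb{F}_q$-rational projection and for the stated constants, is exactly the content of the cited theorem; none of it is needed in this paper, which only applies the result with $m=3$ and $d\leq 16$.
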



\subsection{$q$-polynomials and rank metric codes}\label{sec:codes}

We now recall some preliminary results on $q$-polynomials and rank metric codes; for a detailed introduction to this topic we refer the reader to \cite{JohnSurvey} and \cite{PZ}.

Let $\mathcal{L}_{n,q}=\left\{\sum_{i=0}^{n-1}a_i x^{q^i}\colon a_i\in\mathbb{F}_{q^n}\right\}$ be the set of $q$-polynomials over $\fqn$ of $q$-degree smaller than $n$. Then $\mathcal{L}_{n,q}$ is an $n$-dimensional $\fq$-vector space with the usual sum and scalar multiplication, and also an $\fq$-algebra with the composition modulo $x^{q^n}-x$.
We identify polynomials in $\mathcal{L}_{n,q}$ with the associated $\fq$-linear polynomial maps over $\fq$, so that we can consider the rank and the kernel of polynomials in $\mathcal{L}_{n,q}$. This identification is an isomorphism of $\fq$-algebras between $\mathcal{L}_{n,q}$ and the space of $\fq$-endomorphisms of $\fqn$. Therefore, via the choice of an $\fq$-basis of $\fqn$, $\mathcal{L}_{n,q}$ is isomorphic as an $\fq$-algebra to the space $\mathbb{F}_q^{n\times n}$ of $n\times n$ matrices over $\fq$.
In this correspondence, the rank of a matrix equals the rank of the associated $q$-polynomial.
Therefore, $\mathcal{L}_{n,q}$ is a metric space with the rank metric $d(f,g):={\rm rank}(f-g)$, and $\fq$-linear $[n\times n,k,d]_q$ rank metric codes $\mathcal{C}$ are $k$-dimensional $\fq$-linear subspaces of $\mathcal{L}_{n,q}$ with minimum rank distance $d$ between two distinct elements of $\mathcal{C}$.
Since $\mathcal{C}$ is $\fq$-linear, $d$ equals the minimum rank of a non-zero element of $\mathcal{C}$. The Singleton-like bound $k\leq n(n-d+1)$ holds; see \cite{Delsarte}. The code $\mathcal{C}$ is \emph{maximum rank distance} (MRD for short) when it attains equality in the Singleton-like bound.

The following equivalence between rank metric codes preserves the parameters of a code; see \cite{John}.

\begin{definition}
Two $\fq$-linear $[n\times n,k,d]_q$-codes $\mathcal{C},\mathcal{C}^\prime$ are \emph{equivalent} if there exist two invertible polynomials $f_1(x),f_2(x)\in\mathcal{L}_{n,q}$ and a field automorphism $\varphi\in{\rm Aut}(\mathbb{F}_{q^n})$ such that
\[
\mathcal{C}^{\prime}=f_1\circ\mathcal{C}^{\varphi}\circ f_2=\left\{f_1\circ f^{\varphi}\circ f_2\colon f\in\mathcal{C} \right\},
\]
where $f^{\varphi}(x)$ is obtained from $f(x)$ by applying $\varphi$ to its coefficients.
\end{definition}

Related to the equivalence issue for rank metric codes is the tool of idealisers, as defined in \cite{LN}.
\begin{definition}
The \emph{left idealiser} $L(\mathcal{C})$ and \emph{right idealiser} $R(\mathcal{C})$ of an $\fq$-linear rank metric code $\mathcal{C}\subseteq\mathcal{L}_{n,q}$ are the following $\fq$-algebras:
\[
L(\mathcal{C})=\left\{h(x)\in\mathcal{L}_{n,q}\,\colon\, h\circ f\in\mathcal{C}\mbox{ for all }f\in\mathcal{C}\right\},\quad
R(\mathcal{C})=\left\{h(x)\in\mathcal{L}_{n,q}\,\colon\, f\circ h\in\mathcal{C}\mbox{ for all }f\in\mathcal{C}\right\}.
\]
\end{definition}

Idealisers are invariant under equivalence.

\begin{proposition}\cite[Proposition 4.1]{LTZ}
If $\mathcal{C},\mathcal{C}^{\prime}\subseteq\mathcal{L}_{n,q}$ are equivalent codes, then $L(\mathcal{C})\cong L(\mathcal{C}^{\prime})$ and $R(\mathcal{C})\cong R(\mathcal{C}^{\prime})$ as $\fq$-algebras.
\end{proposition}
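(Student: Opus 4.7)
The plan is to construct explicit $\fq$-algebra isomorphisms between the idealisers of $\cC$ and those of $\cC^\prime$. Given an equivalence $\cC^\prime = f_1 \circ \cC^\varphi \circ f_2$ with $f_1, f_2 \in \cL_{n,q}$ invertible and $\varphi \in \mathrm{Aut}(\fqn)$, the natural candidates are
\[
\Phi \colon L(\cC) \to L(\cC^\prime), \quad h \mapsto f_1 \circ h^\varphi \circ f_1^{-1},
\qquad
\Psi \colon R(\cC) \to R(\cC^\prime), \quad h \mapsto f_2^{-1} \circ h^\varphi \circ f_2.
\]
The key observation motivating these choices is that applying $\varphi$ coefficientwise commutes with composition of $q$-polynomials, i.e.\ $(g_1 \circ g_2)^\varphi = g_1^\varphi \circ g_2^\varphi$, and that $\varphi$ acts trivially on $\fq$.

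First I would verify that $\Phi$ takes values in $L(\cC^\prime)$. Any $g \in \cC^\prime$ has the form $g = f_1 \circ f^\varphi \circ f_2$ with $f \in \cC$, hence
\[
\Phi(h) \circ g \;=\; f_1 \circ h^\varphi \circ f_1^{-1} \circ f_1 \circ f^\varphi \circ f_2 \;=\; f_1 \circ (h \circ f)^\varphi \circ f_2,
\]
and since $h \in L(\cC)$ we have $h \circ f \in \cC$, so the right-hand side lies in $\cC^\prime$. The mirror computation works for $\Psi$, giving $g \circ \Psi(h) = f_1 \circ (f \circ h)^\varphi \circ f_2 \in \cC^\prime$; this is precisely why the conjugation is by $f_1$ on the left idealiser side and by $f_2$ on the right idealiser side.

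Next I would check that $\Phi$ is an $\fq$-algebra morphism. Additivity follows because every element of $\cL_{n,q}$ is $\fq$-linear as a map on $\fqn$, so composition with $f_1$ and $f_1^{-1}$ respects addition. $\fq$-homogeneity uses the fact that $\varphi$ fixes $\fq$ pointwise. Multiplicativity is immediate: the two inner copies of $f_1^{-1} \circ f_1$ collapse, yielding $\Phi(h_1) \circ \Phi(h_2) = f_1 \circ (h_1 \circ h_2)^\varphi \circ f_1^{-1} = \Phi(h_1 \circ h_2)$. Bijectivity is witnessed by the explicit inverse $h^\prime \mapsto (f_1^{-1} \circ h^\prime \circ f_1)^{\varphi^{-1}}$, whose values lie in $L(\cC)$ because the relation $\cC = f_1^{-1} \circ (\cC^\prime)^{\varphi^{-1}} \circ f_2^{-1}$ is itself an equivalence, so the previous step applies symmetrically. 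The verification for $\Psi$ follows the same template.

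There is no real obstacle here: the statement is essentially formal bookkeeping once the compatibility of $\varphi$ with composition is observed. The only point requiring care is to ensure that the \emph{correct} conjugating factor is used on each side, so that the extraneous outer factor cancels when one forms $\Phi(h) \circ g$ or $g \circ \Psi(h)$; choosing $\Phi$ with $f_2$ in place of $f_1^{-1}$, for instance, would not produce an element of $\cL_{n,q}$ that preserves $\cC^\prime$ under left composition.
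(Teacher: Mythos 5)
The paper offers no proof of this proposition: it is imported verbatim from \cite[Proposition 4.1]{LTZ}, so the comparison is with the standard argument given there, and your conjugation maps $\Phi(h)=f_1\circ h^{\varphi}\circ f_1^{-1}$ and $\Psi(h)=f_2^{-1}\circ h^{\varphi}\circ f_2$ are exactly that argument. Your verifications that $\Phi(h)\circ g=f_1\circ(h\circ f)^{\varphi}\circ f_2\in\mathcal{C}^{\prime}$ and $g\circ\Psi(h)=f_1\circ(f\circ h)^{\varphi}\circ f_2\in\mathcal{C}^{\prime}$, the multiplicativity via cancellation of $f_1^{-1}\circ f_1$, and the key identity $(g_1\circ g_2)^{\varphi}=g_1^{\varphi}\circ g_2^{\varphi}$ (valid because $\varphi$ is a power of the $p$-Frobenius and hence commutes with every $q^i$-power map) are all correct.

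There is, however, one step that fails as written: the claim that ``$\varphi$ acts trivially on $\mathbb{F}_q$''. In the paper's definition of equivalence, $\varphi$ ranges over all of ${\rm Aut}(\mathbb{F}_{q^n})={\rm Gal}(\mathbb{F}_{q^n}/\mathbb{F}_p)$, which stabilizes $\mathbb{F}_q$ only setwise; since $q$ is an arbitrary (odd) prime power here, $\sigma:=\varphi|_{\mathbb{F}_q}$ can be a nontrivial element of ${\rm Gal}(\mathbb{F}_q/\mathbb{F}_p)$. For $\lambda\in\mathbb{F}_q$ one then computes $\Phi(\lambda h)=\sigma(\lambda)\,\Phi(h)$ (the scalar $\sigma(\lambda)\in\mathbb{F}_q$ does pull out through $f_1$, precisely because it lies in $\mathbb{F}_q$), so $\Phi$ and $\Psi$ are in general only $\sigma$-semilinear: as written you have proved an isomorphism of rings (equivalently, of $\mathbb{F}_p$-algebras), not of $\mathbb{F}_q$-algebras. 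The repair is routine but must be said. When $q=p$ there is nothing to do. In general, $\Phi$ carries the central copy $\{\lambda x:\lambda\in\mathbb{F}_q\}$ onto itself via $\sigma$, and in every situation where the paper uses the proposition the idealisers are finite fields $\mathbb{F}_{q^{\ell}}\supseteq\mathbb{F}_q$ (Proposition \ref{prop:idealisers}); there one composes $\Phi$ with the extension of $\sigma^{-1}$ to an automorphism of the target field (which exists since ${\rm Gal}(\mathbb{F}_{q^{\ell}}/\mathbb{F}_p)\to{\rm Gal}(\mathbb{F}_q/\mathbb{F}_p)$ is onto) to obtain a genuinely $\mathbb{F}_q$-linear isomorphism. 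A second, cosmetic, slip: inverting the equivalence gives $\mathcal{C}=\left(f_1^{\varphi^{-1}}\right)^{-1}\circ(\mathcal{C}^{\prime})^{\varphi^{-1}}\circ\left(f_2^{\varphi^{-1}}\right)^{-1}$, not $\mathcal{C}=f_1^{-1}\circ(\mathcal{C}^{\prime})^{\varphi^{-1}}\circ f_2^{-1}$ as you wrote; since applying your recipe to the corrected relation yields exactly the inverse map $h^{\prime}\mapsto\left(f_1^{-1}\circ h^{\prime}\circ f_1\right)^{\varphi^{-1}}$ that you displayed, the bijectivity argument survives unchanged.
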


More structural information is known for idealisers of MRD codes.

\begin{proposition}\cite[Section 5]{LTZ} \label{prop:idealisers}
If $\mathcal{C}\subseteq\mathcal{L}_{n,q}$ is an MRD code, then $L(\C)$ and $R(\C)$ are both finite fields. Also, $|L(\C)|=q^{\ell}$ and $|R(\C)|=q^m$ for some divisors $\ell$ and $m$ of $n$.
\end{proposition}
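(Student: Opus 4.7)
The strategy is to reduce the statement to the single structural claim $(\star)$: every nonzero element of $L(\cC)$ (resp.\ $R(\cC)$) is invertible in $\mathcal{L}_{n,q}$ as an $\fq$-linear endomorphism of $\fqn$. Granted $(\star)$, the remainder is formal. Both $L(\cC)$ and $R(\cC)$ are finite-dimensional $\fq$-subalgebras of $\mathcal{L}_{n,q}$ containing the identity $x$ (closure under sum, composition, and $\fq$-scalars is immediate from the $\fq$-linearity of $\cC$). By $(\star)$ they contain no zero divisors, so for every nonzero $h$ in the algebra the left multiplication map $g\mapsto h\circ g$ is an $\fq$-linear injection, hence a bijection by finite dimensionality; the preimage of the identity then provides a two-sided inverse inside the algebra. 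Thus both idealisers are finite division algebras and, by Wedderburn's little theorem, finite fields. Since each contains $\fq$ via the embedding $a\mapsto ax$, its order is of the form $q^\ell$ (respectively $q^m$). Finally, the evaluation action $(h,x)\mapsto h(x)$ endows $\fqn$ with the structure of a vector space over each of these fields, forcing $q^n$ to be a power of the order and thus $\ell\mid n$ and $m\mid n$.

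The only nontrivial step is $(\star)$, and this is where the MRD hypothesis enters. I would prove it for $L(\cC)$ as follows; the case of $R(\cC)$ is symmetric. Assume $h\in L(\cC)\setminus\{0\}$ is not invertible, let $V=\ker h$ and $W=\mathrm{Im}(h)$, of $\fq$-dimensions $t\ge 1$ and $r=n-t$, and consider the $\fq$-linear map
\[
L_h\colon\cC\longrightarrow\cC,\qquad f\longmapsto h\circ f,
\]
which is well defined because $h\in L(\cC)$. One checks $\ker L_h=\{f\in\cC:\mathrm{Im}(f)\subseteq V\}$; choosing an $\fq$-isomorphism $V\cong\fq^t$ embeds $\ker L_h$ rank-preservingly in $\fq^{t\times n}$ as a rank metric code of minimum distance at least $d$, so the Singleton-like bound yields $\dim_{\fq}\ker L_h\le n\max\{0,t-d+1\}$. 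Dually, $\mathrm{Im}(L_h)\subseteq\{g\in\cC:\mathrm{Im}(g)\subseteq W\}$ embeds via $W\cong\fq^r$ as a rank metric code in $\fq^{r\times n}$ of minimum distance at least $d$, whence $\dim_{\fq}\mathrm{Im}(L_h)\le n\max\{0,r-d+1\}$. Rank-nullity for $L_h$, the identity $t+r=n$, and the MRD equality $\dim_{\fq}\cC=n(n-d+1)$ combine into
\[
n(n-d+1)\le n\max\{0,t-d+1\}+n\max\{0,r-d+1\}.
\]
A short case analysis on the signs of $t-d$ and $r-d$ (the principal case $t\ge d$, $r\ge d$ gives $n(n-d+1)\le n(n-2d+2)$, i.e.\ $d\le 1$; the three remaining cases contradict $t\ge 1$, $r\ge 1$, or $\dim\cC>0$) produces a contradiction under the tacit assumption $d\ge 2$ (the degenerate case $d=1$ yields $\cC=\mathcal{L}_{n,q}$, for which the statement is vacuous). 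Hence $t=0$ and $h$ is bijective.

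For $R(\cC)$ the same mechanism applies to $R_h\colon f\mapsto f\circ h$: now $\ker R_h=\{f\in\cC:W\subseteq\ker f\}$, whose elements factor through $\fqn/W$ and constitute a rank metric code in $\fq^{n\times t}$, while $\mathrm{Im}(R_h)\subseteq\{g\in\cC:V\subseteq\ker g\}$ embeds in $\fq^{n\times r}$; the identical Singleton bounds and rank-nullity computation deliver the same contradiction. The main obstacle I foresee is the Singleton estimate in the argument for $(\star)$: this is the sole place where the MRD equality $\dim_{\fq}\cC=n(n-d+1)$ is used in an essential way, and one has to view $\ker L_h$ and $\mathrm{Im}(L_h)$ as genuine rank metric codes in the correct lower-dimensional ambient spaces so that the Singleton-like bound applies with the right parameters.
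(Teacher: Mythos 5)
Your proof is correct, but note that the paper does not prove this proposition at all: it is quoted verbatim from \cite[Section 5]{LTZ}, so there is no internal proof to compare against. Your argument is in the same spirit as the one in Lunardon--Trombetti--Zhou, whose treatment of the nuclei (their name for the idealisers) likewise rests on applying the Singleton-like bound to the subcodes of $\mathcal{C}$ whose images (resp.\ kernels) are constrained by a singular element $h$, combined with rank--nullity for the multiplication map $L_h$ or $R_h$; your case analysis on the signs of $t-d$ and $r-d$, the passage from ``no zero divisors'' to ``division algebra'' via finite-dimensionality, Wedderburn's little theorem, and the divisibility $\ell\mid n$, $m\mid n$ obtained by viewing $\mathbb{F}_{q^n}$ as a vector space over each idealiser under the evaluation action are all sound and complete.

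One small correction of wording rather than substance: you describe the case $d=1$ as one where ``the statement is vacuous.'' It is not vacuous --- it is false. For $d=1$ the MRD condition forces $\mathcal{C}=\mathcal{L}_{n,q}$, whence $L(\mathcal{C})=R(\mathcal{C})=\mathcal{L}_{n,q}\cong\mathbb{F}_q^{n\times n}$, the full matrix algebra, which is not a field for $n\geq 2$. So the hypothesis $d>1$ is a genuine (implicit) assumption of the proposition, carried over from \cite{LTZ}, and your proof correctly requires it; in the present paper it is harmless since the codes $\mathcal{C}_{\delta,s}$ under study have $d\in\{n-1,n-2\}$ with $n=8$. You were right to isolate $(\star)$ as the only place where the MRD equality $\dim_{\mathbb{F}_q}\mathcal{C}=n(n-d+1)$ is used, and your care in placing $\ker L_h$ and $\mathrm{Im}(L_h)$ in the correct ambient spaces $\mathbb{F}_q^{t\times n}$ and $\mathbb{F}_q^{r\times n}$ (and dually $\mathbb{F}_q^{n\times t}$, $\mathbb{F}_q^{n\times r}$ for $R_h$, via factoring through the quotients $\mathbb{F}_{q^n}/W$ and $\mathbb{F}_{q^n}/V$) is exactly what makes the Singleton bound apply with the right parameters.
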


\subsection{Codes $\mathcal{C}_f$ and $\mathcal{C}_{\delta,s}$}\label{sec:ourcodes}

For any $f(x)\in\mathcal{L}_{n,q}$ of degree greater than $1$, define the $\fq$-linear code $\mathcal{C}_f$ and the $\fq$-linear space $U_f$ by
\[
\mathcal{C}_f:=\langle x,f(x)\rangle_{\fqn}=\{ax+bf(x)\colon a,b\in\fqn\}\subseteq\mathcal{L}_{n,q},
\qquad U_f:=\left\{(x,f(x))\colon x\in\mathbb{F}_{q^n} \right\}\subseteq\mathbb{F}_{q^n}\times\mathbb{F}_{q^n}.
\]
Since $\mathcal{C}_f$ has dimension $2$ over $\mathbb{F}_{q^n}$, the MRD property for $\mathcal{C}_f$ reads as follows.
\begin{remark}\label{rem:corrisp}
$\mathcal{C}_f$ is MRD if and only if $\dim_{\fq}\ker(g(x))\leq 1$ for any non-zero $g(x)\in\mathcal{C}_f$.
\end{remark}

Clearly, $\left\{\alpha x\colon \alpha\in\fqn\right\}\subseteq L(\mathcal{C}_f)$, and by Proposition \ref{prop:idealisers} equality holds if $\mathcal{C}_f$ is MRD.
Indeed, Proposition \ref{prop:caratt} shows that the parameters and the maximum size of the left idealiser are enough to characterize up to equivalence the family of codes $\mathcal{C}_f$ with scattered polynomials $f(x)$.
\begin{proposition}\label{prop:caratt}\cite[Proposition 6.1]{CMPZ}
If $\mathcal{C}\subseteq\mathcal{L}_{n,q}$ is an $[n\times n,2n,n-1]_q$ MRD code with $L(\mathcal{C})\cong\fqn$, then $\mathcal{C}$ is equivalent to $\mathcal{C}_f$ for some scattered polynomial $f(x)\in\mathcal{L}_{n,q}$.
\end{proposition}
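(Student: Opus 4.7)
The plan is to apply two equivalence transformations to $\mathcal{C}$: first normalise the left idealiser to the standard copy of $\mathbb{F}_{q^n}$ inside $\mathcal{L}_{n,q}$, and then post-compose with an invertible codeword so that $x$ lies in the resulting code. For the first step, recall that $\mathcal{L}_{n,q}\cong M_n(\mathbb{F}_q)$ is a central simple $\mathbb{F}_q$-algebra, so by Skolem--Noether any two $\mathbb{F}_q$-algebra embeddings of the field $\mathbb{F}_{q^n}$ into $M_n(\mathbb{F}_q)$ are conjugate. (Equivalently: $\mathbb{F}_{q^n}$, viewed as a module over any subfield $K\cong\mathbb{F}_{q^n}$ of $\mathcal{L}_{n,q}$, is necessarily $1$-dimensional over $K$, hence isomorphic to the standard one.) Thus there exists an invertible $\varphi\in\mathcal{L}_{n,q}$ with $\varphi\circ L(\mathcal{C})\circ\varphi^{-1}=\{\alpha x:\alpha\in\mathbb{F}_{q^n}\}$, and replacing $\mathcal{C}$ by the equivalent code $\varphi\circ\mathcal{C}\circ\varphi^{-1}$ we may assume this equality. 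Since pre-composition by $\alpha x$ is multiplication of codewords by $\alpha$, the code $\mathcal{C}$ becomes a $2$-dimensional $\mathbb{F}_{q^n}$-subspace of $\mathcal{L}_{n,q}$.

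The main step is to produce an invertible element in $\mathcal{C}$. Since the minimum distance is $n-1$, every nonzero codeword has rank $n-1$ or $n$, and a rank-$(n-1)$ codeword has a unique $1$-dimensional $\mathbb{F}_q$-kernel $\mathbb{F}_q\xi$. For each $\xi\in\mathbb{F}_{q^n}^*$ consider the evaluation kernel $K_\xi:=\{c\in\mathcal{C}:c(\xi)=0\}$, which is an $\mathbb{F}_{q^n}$-subspace of $\mathcal{C}$ thanks to the normalisation above. I claim $\dim_{\mathbb{F}_{q^n}}K_\xi\leq 1$: otherwise, pick $\mathbb{F}_{q^n}$-independent $c_1,c_2\in K_\xi$ and any $v\notin\mathbb{F}_q\xi$; then
\[
c_2-\bigl(c_2(v)/c_1(v)\bigr)\,c_1\in\mathcal{C}
\]
would be a nonzero codeword vanishing on the $2$-dimensional space $\mathbb{F}_q\xi+\mathbb{F}_q v$, contradicting the minimum distance. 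Summing $|K_\xi|\leq q^n$ over the $(q^n-1)/(q-1)$ projective points $[\xi]\in\mathrm{PG}(n-1,q)$ yields
\[
\#\{c\in\mathcal{C}:\mathrm{rk}(c)=n-1\}\leq\frac{q^n-1}{q-1}\,(q^n-1)<q^{2n}-1,
\]
so $\mathcal{C}$ must contain a rank-$n$, hence invertible, element $g$.

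The code $\mathcal{C}\circ g^{-1}$ is equivalent to $\mathcal{C}$ and still has the same left idealiser (post-composition on the right does not affect $L(-)$). It contains $x=g\circ g^{-1}$ and remains a $2$-dimensional $\mathbb{F}_{q^n}$-subspace, so
\[
\mathcal{C}\circ g^{-1}=\langle x,f(x)\rangle_{\mathbb{F}_{q^n}}=\mathcal{C}_f
\]
for some $f\in\mathcal{L}_{n,q}$. Since equivalence preserves the MRD property, $\mathcal{C}_f$ is MRD, and $f$ is therefore scattered by the definition recalled in the introduction. The main obstacle is the invertibility argument in the second paragraph: it is exactly the $\mathbb{F}_{q^n}$-linearity gained from the first reduction that turns $K_\xi$ into an $\mathbb{F}_{q^n}$-subspace and makes the counting go through; the remaining steps are bookkeeping on equivalence transformations.
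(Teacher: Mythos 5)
The paper does not prove this statement itself --- it is quoted verbatim from \cite[Proposition~6.1]{CMPZ} --- so there is no internal proof to compare against; judged on its own, your argument is correct and self-contained. All three steps check out: Skolem--Noether (or the module-theoretic remark) legitimately normalises $L(\mathcal{C})$ to $\{\alpha x\colon\alpha\in\fqn\}$, and conjugation by an invertible $q$-polynomial is indeed an equivalence that transports the left idealiser by $L(f_1\circ\mathcal{C}\circ f_2)=f_1\circ L(\mathcal{C})\circ f_1^{-1}$; the counting step is sound (note that $c_1(v)\neq0$ is automatic, since otherwise $c_1$ would kill the $2$-dimensional space $\langle\xi,v\rangle_{\fq}$ and violate $d=n-1$, so your quotient is well defined), and the inequality $\tfrac{(q^n-1)^2}{q-1}<q^{2n}-1$ holds for all $q\geq2$; and right-composition with $g^{-1}$ preserves $L(\mathcal{C})$, so the resulting code is an $\fqn$-span $\langle x,f\rangle_{\fqn}$ with $f$ scattered because the MRD property is an equivalence invariant. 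The one place your route differs from the standard one in \cite{CMPZ} is the existence of an invertible codeword: that is usually extracted from Delsarte's rank-distribution formula for MRD codes (which forces codewords of rank $n$ to exist), whereas your union-of-kernels count is more elementary and exploits the $\fqn$-linearity you have just gained --- a nice trade, since it avoids quoting the rank distribution. A cosmetic remark only: after extracting a basis $\{x,f\}$ you may want to subtract from $f$ its degree-one term so that $f$ literally satisfies the ``degree greater than $1$'' convention in the paper's definition of $\mathcal{C}_f$; this does not change the span.
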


Let $n$ be an even positive integer, $s\in\{1,\ldots,n-1\}$ be coprime with $n/2$, and $\delta\in\mathbb{F}_{q^n}^*$. Define
\[
f_{\delta,s}(x):= x^{q^s}+\delta x^{q^{\frac{n}{2}+s}}\in\mathcal{L}_{n,q},\qquad U_{\delta,s}:=U_{f_{\delta,s}},\qquad \mathcal{C}_{\delta,s}:=\mathcal{C}_{f_{\delta,s}}.
\]
Denote by $\mathrm{N}_{q^n/ q^{\frac{n}{2}}}:\fqn\to\mathbb{F}_{q^{\frac{n}{2}}}$ the norm function $x\mapsto x^{1+q^{n/2}}$ of $\fqn$ over $\mathbb{F}_{q^{\frac{n}{2}}}$.
\begin{itemize}
    \item If $\mathrm{N}_{q^n/ q^{\frac{n}{2}}}(\delta)=\mathrm{N}_{q^n/ q^{\frac{n}{2}}}(\delta^\prime)$, then $\mathcal{C}_{\delta,s}$ is MRD if and only if $\mathcal{C}_{\delta^\prime,s}$ is MRD; see \cite[Section 5]{CMPZ}.
    \item If $\mathrm{N}_{q^n/ q^{\frac{n}{2}}}(\delta)=1$ and $n>2$, then $\dim_{\fq}\ker(f_{\delta,s}(x))>1$ and  $\mathcal{C}_{\delta,s}$ is not MRD.
    \item If $\mathrm{N}_{q^n/ q^{\frac{n}{2}}}(\delta)\ne1$, then the rank of any non-zero element of $\mathcal{C}_{\delta,s}$ is at least $n-2$, and hence $\mathcal{C}_{\delta,s}$ is either Almost MRD or MRD; see \cite[Proposition 4.1]{CMPZ}.
    \item If $n=2$, then $\mathcal{C}_{\delta,s}=\mathcal{L}_{2,q}$ is trivially MRD.
    \item If $n=4$, then $\mathcal{C}_{\delta,s}$ is equivalent to a so-called twisted Gabidulin code, and is MRD if and only if $\delta^{1+q+q^2+q^3}\ne1$; see \cite{John}.
    \item If $n=6$, then there are exactly $\lceil(q^2+q+1)(q-2)\rceil$ values of $\mathrm{N}_{q^6/q^3}(\delta)$ for which $\mathcal{C}_{\delta,s}$ is MRD, and a characterization of such values of $\mathrm{N}_{q^6/q^3}(\delta)$ is known; see \cite[Theorem 7.3]{PZnumber} and \cite{BCM}.
    \item If $n=8$, $q$ is odd and $\mathrm{N}_{q^8/q^4}(\delta)=-1$, then $\mathcal{C}_{\delta,s}$ is MRD; see \cite[Theorem 7.2]{CMPZ}.
    \item If $n=8$, $q$ is odd and $q\leq11$, then $\mathcal{C}_{\delta,s}$ is MRD if and only if $\mathrm{N}_{q^8/q^4}(\delta)=-1$; see \cite[Remark 7.4]{CMPZ}.
    \item If $n\geq10$ and
    \[
    n \geq\begin{cases}
            8s+4 & \mbox{if }q=3\mbox{ and }s>1,\mbox{ or }q=2\mbox{ and }s>2, \\
            8s+2 & \mbox{otherwise},
    \end{cases}
    \]
    then $\mathcal{C}_{\delta,s}$ is not MRD; see \cite[Theorem 4.5]{PZZ}.
\end{itemize}

\begin{remark}\label{rem:pseudo}
We have defined the codes $\mathcal{C}_{\delta,s}$ only when $\delta\ne0$, which is the case under investigation in this paper. For completeness we mention here what happens when $\delta=0$: the code $\langle x,x^{q^s}\rangle_{\mathbb{F}_{q^n}}\subseteq\mathcal{L}_{n,q}$ is MRD if and only if $s$ is coprime with $n$. In this case, $\langle x,x^{q^s}\rangle_{\mathbb{F}_{q^n}}$ is a so-called generalized Gabidulin code; see \cite{gab2}.
\end{remark}

Recall that two subsets $S_1,S_2$ of $\mathbb{F}_{q^n}\times\mathbb{F}_{q^n}$ are \emph{${\rm \Gamma L}(2,q^n)$-equivalent} if $S_2=\Sigma(S_1)$ for some invertible semilinear map $\Sigma\in{\rm \Gamma L}(2,q^n)$, that is, $\Sigma=L\circ\varphi$ where $L\in{\rm GL}(2,q^n)$ and $\varphi$ acts elementwise as a field automorphism of $\mathbb{F}_{q^n}$.

For MRD codes, equivalence of codes $\mathcal{C}_f$ corresponds to ${\rm \Gamma L}$-equivalence of subspaces $U_f$.

\begin{theorem}\cite[Theorem 8]{John}\label{JohnEquiv}
Let $f(x),g(x)\in\mathcal{L}_{n,q}$ be such that $\mathcal{C}_{f},\mathcal{C}_{g}\subseteq\mathcal{L}_{n,q}$ are MRD codes. Then the $\fq$-vector subspaces $U_f$ and $U_g$ of $\mathbb{F}_{q^n}\times\mathbb{F}_{q^n}$ are ${\rm \Gamma L}(2,q^n)$-equivalent if and only if $\mathcal{C}_f$ and $\mathcal{C}_g$ are equivalent.
\end{theorem}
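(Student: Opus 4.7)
The plan is to prove each implication by explicit construction of the datum certifying the other form of equivalence.

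For the ``only if'' direction (which does not really use MRD), I would assume $U_g=\Sigma(U_f)$ with $\Sigma=L\varphi$, $L=\left(\begin{smallmatrix}\alpha & \beta \\ \gamma & \delta\end{smallmatrix}\right)\in\mathrm{GL}(2,q^n)$. Since $\varphi$ acts componentwise and commutes with Frobenius powers, $\varphi(U_f)=U_{f^\varphi}$, whence
\[
U_g=\{(\alpha y+\beta f^\varphi(y),\,\gamma y+\delta f^\varphi(y))\colon y\in\mathbb{F}_{q^n}\}.
\]
Because $U_g$ is the graph of $g$, the polynomial $h(y):=\alpha y+\beta f^\varphi(y)\in\mathcal{L}_{n,q}$ induces a bijection on $\mathbb{F}_{q^n}$, hence is invertible, and one reads off $g\circ h=\gamma x+\delta f^\varphi$. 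Both $h$ and $g\circ h$ lie in $\mathcal{C}_f^\varphi$ and are $\mathbb{F}_{q^n}$-linearly independent (since $\det L\ne 0$), so $\mathcal{C}_f^\varphi=\langle h,\,g\circ h\rangle_{\mathbb{F}_{q^n}}$; composing on the right with $h^{-1}$ yields $\mathcal{C}_f^\varphi\circ h^{-1}=\langle x,g\rangle_{\mathbb{F}_{q^n}}=\mathcal{C}_g$, which is the code equivalence with $f_1=x$ and $f_2=h^{-1}$.

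For the converse, I would assume $\mathcal{C}_g=f_1\circ\mathcal{C}_f^\varphi\circ f_2$. Here MRD is essential: since $\{\alpha x\colon\alpha\in\mathbb{F}_{q^n}\}\subseteq L(\mathcal{C}_f)$ and $|L(\mathcal{C}_f)|=q^\ell$ with $\ell\mid n$ by Proposition~\ref{prop:idealisers}, we obtain $L(\mathcal{C}_f)=L(\mathcal{C}_g)=\{\alpha x\}\cong\mathbb{F}_{q^n}$. Equivalence conjugates left idealisers, so $f_1$ normalises $\{\alpha x\}$; writing $f_1(x)=\sum_i c_i x^{q^i}$ and matching coefficients in $f_1(\alpha x)=\alpha' f_1(x)$ forces $f_1=c\,x^{q^j}$ for some $c\in\mathbb{F}_{q^n}^*$ and $j$. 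Using the intertwining identity $\phi^j\circ f^\varphi=f^{\phi^j\varphi}\circ\phi^j$ (with $\phi(x)=x^q$), and setting $\psi:=\phi^j\varphi$ and $g^*:=\phi^j\circ f_2$, the hypothesis reduces to $\mathcal{C}_g=\mathcal{C}_{f^\psi}\circ g^*$ with $g^*$ invertible. Since $\mathcal{C}_{f^\psi}\circ g^*$ is $\mathbb{F}_{q^n}$-spanned by $g^*$ and $f^\psi\circ g^*$, which both lie in $\mathcal{C}_g=\langle x,g\rangle_{\mathbb{F}_{q^n}}$, one may write $g^*=\alpha x+\beta g$ and $f^\psi\circ g^*=\gamma x+\delta g$ with $\alpha\delta-\beta\gamma\ne 0$. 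Reading the identity $f^\psi(\alpha x+\beta g(x))=\gamma x+\delta g(x)$ as an equality of pairs $(y,f^\psi(y))=L(x,g(x))$ yields $U_{f^\psi}=L(U_g)$; combined with $U_{f^\psi}=\psi(U_f)$, this gives $U_g=(L^{-1}\psi)(U_f)\in\mathrm{\Gamma L}(2,q^n)\cdot U_f$, as required.

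The step I expect to be the main obstacle is the rigidification of $f_1$ in the converse direction. Without the MRD hypothesis, $f_1$ could be an arbitrary invertible $\mathbb{F}_q$-linear polynomial, and left composition with such an $f_1$ would in general destroy the $\mathbb{F}_{q^n}$-linearity of the code and prevent the reduction $f_1\circ\mathcal{C}_f^\varphi=\mathcal{C}_{f^\psi}\circ\phi^j$. This reduction is precisely what converts an abstract rank metric equivalence into the concrete semilinear bijection $L^{-1}\psi$ between $U_f$ and $U_g$; the MRD property, through its control of the left idealiser, is what guarantees it.
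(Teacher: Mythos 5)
Your proposal is correct: the paper itself does not prove this statement but imports it from Sheekey \cite{John}, and your argument follows essentially the same route as the original proof there --- the forward direction (which indeed needs no MRD hypothesis) by reading $U_g$ as a graph to extract the invertible $h=\alpha x+\beta f^{\varphi}$ with $\mathcal{C}_f^{\varphi}\circ h^{-1}=\mathcal{C}_g$, and the converse by using MRD to force $L(\mathcal{C}_f)\cong L(\mathcal{C}_g)\cong\mathbb{F}_{q^n}$, so that $f_1$ normalises the embedded field and must be $c\,x^{q^j}$. The only points left implicit (absorbing the scalar $c$ via the $\mathbb{F}_{q^n}$-linearity of the code, and the fact that conjugation by $f_1$ induces an element of $\mathrm{Gal}(\mathbb{F}_{q^n}/\mathbb{F}_q)$, which justifies $\alpha'=\alpha^{q^j}$ before the coefficient matching) are routine and do not constitute gaps.
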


The ${\rm \Gamma L}$-equivalence between subspaces $U_{\delta,s}$ has been determined in \cite{CMPZ}.

\begin{proposition}\cite[Proposition 5.1]{CMPZ}\label{fequiv}
Let $n\geq4$ be even, $1\leq s,s^\prime< \frac{n}{2}$ be such that $\gcd(s,\frac{n}{2})=\gcd(s^\prime,\frac{n}{2})=1$, and $\delta,\delta^\prime\in\mathbb{F}_{q^{n}}^*$ satisfy $\mathrm{N}_{q^n/q^{\frac{n}{2}}}(\delta)\ne1$, $\mathrm{N}_{q^n/q^{\frac{n}{2}}}(\delta^\prime)\ne1$.
Then $U_{\delta,s}$ and $U_{\delta^\prime,s^\prime}$ are ${\rm \Gamma L}(2,q^{n})$-equivalent if and only if one of the following cases occurs for some automorphism $\sigma\in{\rm Aut}(\mathbb{F}_{q^{n/2}})$:
\begin{itemize}
    \item $s'=s$ and $\mathrm{N}_{q^n/q^{\frac{n}{2}}}(\delta)=(\mathrm{N}_{q^n/q^{\frac{n}{2}}}(\delta^\prime))^\sigma$;
    \item $s'+s=n/2$ and  $\mathrm{N}_{q^n/q^{\frac{n}{2}}}(\delta)\cdot(\mathrm{N}_{q^n/q^{\frac{n}{2}}}(\delta^\prime))^\sigma=1$.
\end{itemize}
\end{proposition}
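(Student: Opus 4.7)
\emph{Backward implication (constructive).} Suppose $s' = s$ and $\N_{q^n/q^{n/2}}(\delta) = \N_{q^n/q^{n/2}}(\delta')^\sigma$ for some $\sigma$. I will first apply the coordinatewise $q^i$-Frobenius $\varphi$ (with $i$ chosen so that $q^i$ implements $\sigma^{-1}$ on $\mathbb{F}_{q^{n/2}}$), which sends $U_{\delta,s}$ to $U_{\delta^{q^i},s}$. I then invoke Hilbert 90 applied to $\N_{q^n/q^{n/2}}$ to produce $a \in \fqn^{*}$ with $a^{q^s(1-q^{n/2})} = \delta'/\delta^{q^i}$ (the right-hand side lies in $\ker \N_{q^n/q^{n/2}}$ thanks to the norm hypothesis), and verify that the diagonal $L = \bigl(\begin{smallmatrix}a & 0\\ 0 & a^{q^s}\end{smallmatrix}\bigr)$ realises the required equivalence. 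A symmetric construction with an antidiagonal $L$ handles the case $s+s'=n/2$. The bulk of the work is the forward implication.

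\textbf{Setup of the polynomial identity.} Let $\Sigma \in \G(2,q^n)$ satisfy $\Sigma(U_{\delta,s}) = U_{\delta',s'}$ and factor $\Sigma = L \circ \varphi$ with $\varphi$ the coordinatewise $q^i$-Frobenius and $L \in \mathrm{GL}(2,q^n)$. Since $\varphi(U_{\delta,s}) = U_{\delta^{q^i},s}$, I replace $\delta$ by $\delta^{q^i}$ and reduce to $\Sigma = L$ linear, reinstating the automorphism $\sigma$ at the end. Writing $L = \bigl(\begin{smallmatrix}a & b\\ c & d\end{smallmatrix}\bigr)$, the condition becomes the polynomial identity
\[
cx + d\, f_{\delta,s}(x) = f_{\delta',s'}\bigl(ax + b\, f_{\delta,s}(x)\bigr) \quad \text{in } \mathcal{L}_{n,q}.
\]
Expanding the right-hand side and reducing exponents modulo $n$ via $x^{q^n}=x$ will produce $q$-monomials at the four exponents
\[
s', \quad \tfrac{n}{2}+s', \quad (s+s') \bmod n, \quad (\tfrac{n}{2}+s+s') \bmod n,
\]
which under $1 \leq s,s' \leq \tfrac{n}{2}-1$ are pairwise distinct, while the left-hand side has exponents $\{0, s, \tfrac{n}{2}+s\}$.

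\textbf{Dichotomy and norm relations.} I then match coefficients. If $a \ne 0$, the $q$-monomial of exponent $s'$ on the right has nonzero coefficient $a^{q^{s'}}$ and must be absorbed on the left; the only admissible choice is $s' = s$. The surplus exponents $2s \bmod n$ and $(\tfrac{n}{2}+2s) \bmod n$ then give a $2 \times 2$ $\fqn$-linear system in $(b^{q^s},\,b^{q^{n/2+s}})$ of determinant $\delta'\bigl(1 - \N_{q^n/q^{n/2}}(\delta)^{q^s}\bigr)$, nonzero by hypothesis, so $b = 0$; matching the residual coefficients yields $\delta' = \delta \cdot a^{q^s(1-q^{n/2})}$, and raising to the $(1+q^{n/2})$-th power, together with $(1-q^{n/2})(1+q^{n/2}) = 1-q^n$ and $a^{q^n}=a$, annihilates the $a$-contribution and produces $\N_{q^n/q^{n/2}}(\delta') = \N_{q^n/q^{n/2}}(\delta)$. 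Reinstating $\varphi$ yields the first case of the proposition. If $a = 0$, invertibility of $L$ forces $b, c \ne 0$; the vanishing of the two RHS coefficients at unmatched exponents would again force $b = 0$ through the same nondegenerate $2\times 2$ system, unless one exponent collides with an LHS exponent. A short case analysis shows this happens only for $s+s'=n/2$, in which $(\tfrac{n}{2}+s+s') \bmod n = 0$ matches the constant term on the left. The vanishing of the coefficient at $x^{q^{n/2}}$ then gives $b^{q^{s'}(1-q^{n/2})} = -\delta' \delta^{q^{n/2+s'}}$; raising to $(1+q^{n/2})$ and using $(-1)^{1+q^{n/2}} = 1$ (since $q$ and $q^{n/2}$ are odd) produces $\N_{q^n/q^{n/2}}(\delta') \cdot \N_{q^n/q^{n/2}}(\delta)^{q^{s'}} = 1$, which after reinstating $\varphi$ is the second case for an appropriate $\sigma$.

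\textbf{Expected obstacle.} The delicate step will be the exponent bookkeeping that forces the dichotomy $s'=s$ versus $s+s'=n/2$: one must verify pairwise distinctness of the four RHS exponents modulo $n$ and enumerate all collisions with $\{0, s, \tfrac{n}{2}+s\}$, using the range constraints $1 \leq s, s' \leq \tfrac{n}{2}-1$ and the coprimality hypotheses. A few degenerate configurations, notably $n = 4$ with $s = s' = 1$ (where $2s = n/2$ and the relevant $2 \times 2$ system collapses to a single equation), need separate treatment. The hypothesis $\N_{q^n/q^{n/2}}(\delta) \ne 1$ enters precisely through the nondegeneracy of the critical $2 \times 2$ coefficient matrix and is the algebraic engine producing the clean split.
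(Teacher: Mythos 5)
This proposition is quoted in the paper from \cite[Proposition 5.1]{CMPZ} without proof, so there is no in-paper argument to compare against; your reconstruction follows the standard route (coefficient comparison of $q$-polynomials after splitting $\Sigma=L\circ\varphi$) and is essentially correct. The exponent bookkeeping checks out: the four right-hand exponents $s',\,\tfrac n2+s',\,s+s',\,\tfrac n2+s+s' \pmod n$ are pairwise distinct, for $a\ne0$ the nonzero coefficients at $s'$ and $\tfrac n2+s'$ force $s'=s$, the determinant of the $2\times2$ system is indeed $\delta'\bigl(1-\mathrm{N}_{q^n/q^{n/2}}(\delta)^{q^s}\bigr)\ne0$, and for $a=0$ the nonvanishing of $c$ forces $s+s'=n/2$. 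Two small points to tidy up: automorphisms of $\mathbb{F}_{q^n}$ are powers of the $p$-Frobenius rather than of the $q$-Frobenius (harmless, since $\mathrm{N}_{q^n/q^{n/2}}(\delta^{p^k})=\mathrm{N}_{q^n/q^{n/2}}(\delta)^{p^k}$ and restriction to $\mathbb{F}_{q^{n/2}}$ is surjective onto its automorphism group); and the flagged degenerate case $n=4$, $s=s'=1$ does need the extra observation that even when the collision $\tfrac n2+2s\equiv 0$ lets $b\ne0$ survive, the matching at exponents $s$ and $\tfrac n2+s$ still yields $\delta'=\delta a^{q^s(1-q^{2})}$ and hence the first bullet, so the conclusion is unaffected.
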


\subsection{An algebraic curve attached to $\mathcal{C}_{\delta,s}$}\label{sec:curve}

We report here a characterization of MRD codes $\mathcal{C}_{\delta,s}$ provided in \cite{PZZ}, which is our starting point for Section \ref{sec:MRD}.
The discussion in \cite{PZZ} is in terms of kernels of the $q$-polynomials in $\mathcal{C}_{\delta,s}$; we report it in terms of the MRD property for $\mathcal{C}_{\delta,s}$, by means of Remark \ref{rem:corrisp}.

Theorem \ref{th:charact} is a sufficient condition for $\mathcal{C}_{\delta,s}$ not being MRD.
\begin{theorem}\label{th:charact}{\rm \cite[Theorem 3.6]{PZZ}}
Let $\delta\in\mathbb{F}_{q^n}^*$ be such that $\alpha:=\mathrm{N}_{q^n/q^{\frac{n}{2}}}(\delta)\in\mathbb{F}_{q^{n/2}}^*$ satisfies $\alpha\ne1$, and $s\in\{1,\ldots,n-1\}$ be coprime with $n/2$.
If there exist $T,S,A,B\in\mathbb{F}_{q^{n/2}}$ such that
\begin{enumerate}
    \item[(i)] $(1-\alpha)(T+T^{q^s})-\alpha S^{q^s+1}+(1+\alpha)(AS-2BT)=0$,
    \item[(ii)] $X^2-SX-T\in\mathbb{F}_{q^{n/2}}[X]$ is irreducible over $\mathbb{F}_{q^{n/2}}$,
    \item[(iii)] $S^{q^s}=2A+BS$,
    \item[(iv)] $-T^{q^s}=A^2+B(AS-BT)$,
\end{enumerate}
then $\mathcal{C}_{\delta,s}\subseteq\mathcal{L}_{n,q}$ is not MRD.
\end{theorem}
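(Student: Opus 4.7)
By Remark \ref{rem:corrisp}, it is enough to exhibit a non-zero element $g(x) = ax + bf_{\delta,s}(x) \in \mathcal{C}_{\delta,s}$ whose kernel contains a $2$-dimensional $\mathbb{F}_q$-subspace. My plan is to construct such a $g$ explicitly from the data $(T,S,A,B)$ provided by the hypothesis.

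First, condition (ii) yields a root $\xi \in \mathbb{F}_{q^n}\setminus\mathbb{F}_{q^{n/2}}$ of $X^2 - SX - T$, so $\bar\xi := \xi^{q^{n/2}} = S - \xi$ and $\{1,\xi\}$ is an $\mathbb{F}_{q^{n/2}}$-basis of $\mathbb{F}_{q^n}$. I claim that, writing $\xi^{q^s} = A' + B'\xi$ with $A', B' \in \mathbb{F}_{q^{n/2}}$, conditions (iii) and (iv) force $A' = A$, $B' = B$. Indeed, applying the Frobenius $x\mapsto x^{q^s}$ to $\xi^2 = S\xi + T$ gives $(A' + B'\xi)^2 = S^{q^s}(A' + B'\xi) + T^{q^s}$; expanding using $\xi^2 = S\xi + T$ and matching the $1$- and $\xi$-components in the basis $\{1,\xi\}$ recovers precisely (iii) and (iv). As a consequence,
\[
\xi^{q^{n/2+s}} = (A+B\xi)^{q^{n/2}} = (S^{q^s} - A) - B\xi,
\]
and analogous expressions for $\bar\xi^{q^s}$ and $\bar\xi^{q^{n/2+s}}$ follow by Frobenius symmetry.

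Next, I impose on $g$ the vanishing on two $\mathbb{F}_q$-linearly independent elements naturally built from $\xi$ and $\mathbb{F}_{q^{n/2}}$-data (for instance, on the pair $\{\xi, \bar\xi\}$, or on $\{\xi, c\}$ for a suitable $c \in \mathbb{F}_{q^{n/2}}^*$). Substituting the identities above and decomposing $\delta = d_0 + d_1\xi$ with $d_0, d_1 \in \mathbb{F}_{q^{n/2}}$ turns the two vanishing conditions into a linear system over $\mathbb{F}_{q^{n/2}}$ in the four coordinates of $(a,b)\in(\mathbb{F}_{q^{n/2}})^4$, with coefficients polynomial in $S, T, A, B, S^{q^s}, T^{q^s}, d_0, d_1$. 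The existence of a non-trivial $(a,b)$ amounts to the vanishing of the associated $4\times 4$ determinant, which is an equation in $\mathbb{F}_{q^{n/2}}$.

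Finally, I exploit the fact that the MRD property of $\mathcal{C}_{\delta,s}$ depends on $\delta$ only through $\alpha = \delta^{1+q^{n/2}} = d_0^2 + d_0 d_1 S - d_1^2 T$ (a direct computation using $\delta^{q^{n/2}} = d_0 + d_1\bar\xi$) to rewrite the determinantal equation in terms of $S, T, A, B, \alpha$ alone. Repeated use of (iii) and (iv) to eliminate $S^{q^s}$, $T^{q^s}$ and $A^2$, followed by elimination of $d_0, d_1$ via the norm identity, should collapse the determinant to a single polynomial equation in $\mathbb{F}_{q^{n/2}}$; this equation is (i). I expect the main obstacle to be this last step: carrying out the elimination so that the $1\pm\alpha$ coefficients and the trace-like combination $T+T^{q^s}$ of condition (i) emerge cleanly requires a judicious choice of the two kernel elements and the right ordering of the simplifications. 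With (i) in hand, the construction above produces the required $g$ and hence shows that $\mathcal{C}_{\delta,s}$ is not MRD.
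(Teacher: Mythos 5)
First, a remark on context: the paper does not prove this statement itself --- it is quoted verbatim from \cite[Theorem 3.6]{PZZ} --- so there is no internal proof to compare against; your attempt has to be judged on its own. Your reading of hypotheses (ii)--(iv) is essentially right: they encode an element $\xi$ with $\xi^2=S\xi+T$ generating $\mathbb{F}_{q^n}$ over $\mathbb{F}_{q^{n/2}}$, on which the $q^s$-Frobenius acts by $\xi^{q^s}=A+B\xi$ (up to the caveat that the system (iii)--(iv) has \emph{two} solutions $(A,B)$, namely $B=\pm\Delta^{(q^s-1)/2}$ as in Section \ref{sec:curve}, and only one of them satisfies $\xi^{q^s}=A+B\xi$; the other gives $\xi^{q^{n/2+s}}=A+B\xi$, so ``(iii) and (iv) force $A'=A$, $B'=B$'' is not justified and the two cases are not interchangeable in (i)).

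The genuine gap is that $\xi$ is not a kernel element of the sought $g$, and your concrete construction fails. Imposing $g(\xi)=g(\bar\xi)=0$ with $g=ax+bf_{\delta,s}$ and using $\xi^{q^s}=A+B\xi$, $\xi^{q^{n/2+s}}=A+B\bar\xi$, the $2\times2$ determinant factors as $(\xi-\bar\xi)\bigl((1+\delta)A+\delta BS\bigr)$, so the solvability condition is $(1+\delta)A+\delta BS=0$: a condition linear in $A,B,S$ that depends on $\delta$ itself rather than on $\alpha=\mathrm{N}_{q^n/q^{n/2}}(\delta)$, and which neither coincides with nor follows from (i). The fallback ``$\{\xi,c\}$ for a suitable $c$'' fares no better and is not developed; and the decisive computation is in any case explicitly deferred (``should collapse \dots this equation is (i)''), so the argument is incomplete even on its own terms. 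The correct role of $\xi$ is as the \emph{ratio} of two kernel elements: for $z\notin\mathbb{F}_q$ and $u\ne0$, both $u$ and $zu$ lie in $\ker(f_{\delta,s}(x)-mx)$ for a common $m$ if and only if $\bigl(u^{q^{n/2}-1}\bigr)^{q^s}=-\tfrac{z^{q^s}-z}{\delta\,(z^{q^{n/2+s}}-z)}$; such a $u$ exists (Hilbert 90) precisely when the right-hand side has $\mathrm{N}_{q^n/q^{n/2}}$-norm $1$, which after setting $z=\xi$, $z^{q^s}=A+Bz$ and eliminating $S^{q^s}$, $T^{q^s}$ via (iii)--(iv) reduces exactly to (i). Reworking your argument along these lines would give a complete proof.
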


We will now translate the existence of $T,S,A,B$ satisfying the assumptions of Theorem \ref{th:charact} into the existence of a suitable point of an $\mathbb{F}_{q^{n/2}}$-rational algebraic curve. The equation of this curve is obtained through arithmetic manipulations which depend on $q$ being even or odd.
We will give the resulting equation only in the case when $q$ is odd, because this is the case that will be developed in the next sections. 
Therefore, assume in the rest of the section that $q$ is odd.

Let $T,S,A,B$ be as in the assumptions of Theorem \ref{th:charact}. From Conditions (iii) and (iv) it follows that
\[
B=\epsilon\Delta^{\frac{q^s-1}{2}}, \quad A=\frac{1}{2}(S^{q^s}-\epsilon S\Delta^{\frac{q^s-1}{2}}),
\]
where $\Delta=S^2+4T$ and $\epsilon$ is either $1$ or $-1$.
Write $\beta:=\epsilon\frac{\alpha+1}{1-\alpha}$,
and choose any non-square element $\eta$ of $\mathbb{F}_{q^{n/2}}$. The irreducibility condition (ii) in Theorem \ref{th:charact} is equivalent to the existence of an element $Z\in\mathbb{F}_{q^{n/2}}^*$ such that $\Delta=\eta Z^2$. Now, using $T=\frac{\eta Z^2-S^2}{4}$ together with Condition (iv), we obtain the following equation from Condition (i):
\begin{equation}\label{eq:eccola}
(S^{q^s}-S)^2 = \eta Z^2 + \eta^{q^s} Z^{2q^s} - 2\beta\eta^{\frac{q^s+1}{2}}Z^{q^s+1}.
\end{equation}
Viceversa, suppose that $\beta=\epsilon\frac{\alpha+1}{1-\alpha}$ with $\epsilon\in\{1,-1\}$, and that Equation \eqref{eq:eccola} is satisfied for some $S,Z\in\mathbb{F}_{q^{n/2}}$ with $Z\ne0$ and some non-square $\eta$ of $\mathbb{F}_{q^{n/2}}$. Then clearly there exist $T,S,A,B$ satisfying the assumptions of Theorem \ref{th:charact}.

In terms of the algebraic plane curve $\mathcal{X}_{\delta,s}$ with affine equation
\begin{equation}\label{eq:curveqodd_withs}
\mathcal{X}_{\delta,s}:\quad -(S^{q^s}-S)^2+\eta Z^2 + \eta^{q^s} Z^{2q^s} - 2\beta\eta^{\frac{q^s+1}{2}}Z^{q^s+1}=0,
\end{equation}
the discussion above proves the following proposition.

\begin{proposition}{\rm \cite[Section 3.1]{PZZ}}\label{prop:condX}
Let $\eta$ be a non-square in $\mathbb{F}_{q^{n/2}}$,
Let $\delta\in\mathbb{F}_{q^n}^*$, $s$ be coprime with $n/2$, $\alpha=\mathrm{N}_{q^n/q^{\frac{n}{2}}}(\delta)$ with $\alpha\ne1$, $\epsilon\in\{1,-1\}$ and $\beta:=\epsilon\frac{\alpha+1}{1-\alpha}$.
If for some non-square $\eta$ of $\mathbb{F}_{q^{n/2}}$ the curve $\mathcal{X}_{\delta,s}$ has an $\mathbb{F}_{q^{n/2}}$-rational affine point $(\bar{s},\bar{z})$ with $\bar{z}\ne0$, then the code $\mathcal{C}_{\delta,s}\subseteq\mathcal{L}_{n,q}$ is not MRD.
\end{proposition}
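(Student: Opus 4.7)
The plan is to reverse the manipulations that led from conditions (i)--(iv) of Theorem \ref{th:charact} to the equation of $\mathcal{X}_{\delta,s}$. Given an $\mathbb{F}_{q^{n/2}}$-rational affine point $(\bar{S},\bar{Z})$ on $\mathcal{X}_{\delta,s}$ with $\bar{Z}\ne 0$, I will construct an explicit quadruple $(T,S,A,B)\in\mathbb{F}_{q^{n/2}}^{4}$ fulfilling hypotheses (i)--(iv) of Theorem \ref{th:charact}. The conclusion that $\mathcal{C}_{\delta,s}$ is not MRD then follows immediately from that theorem.

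The construction is dictated by the discussion preceding the statement: set $S:=\bar{S}$, $\Delta:=\eta\bar{Z}^{2}$, $T:=\tfrac{1}{4}(\Delta-S^{2})$, $B:=\epsilon\Delta^{(q^s-1)/2}$ and $A:=\tfrac{1}{2}\bigl(S^{q^s}-\epsilon S\Delta^{(q^s-1)/2}\bigr)$; divisions by $2$ are legitimate because $q$ is odd. Condition (ii) is immediate: the discriminant of $X^{2}-SX-T$ equals $S^{2}+4T=\Delta=\eta\bar{Z}^{2}$, which is a non-square in $\mathbb{F}_{q^{n/2}}$ since $\eta$ is a non-square and $\bar{Z}\ne 0$. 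Condition (iii) reduces to the identity $2A+BS=S^{q^s}$, which is immediate from the definitions.

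For (iv) one expands $A^{2}+B(AS-BT)$ and uses $B^{2}=\Delta^{q^s-1}$ together with $S^{2}+4T=\Delta$; the mixed terms containing $\Delta^{(q^s-1)/2}$ cancel and the remaining expression collapses to $\tfrac{1}{4}\bigl(S^{2q^s}-\Delta^{q^s}\bigr)=\tfrac{1}{4}(S^{2}-\Delta)^{q^s}=\tfrac{1}{4}(-4T)^{q^s}=-T^{q^s}$, where the last equality holds because $-4$ lies in the prime field and is hence fixed by the $q^s$-th Frobenius. Condition (i) is the main bookkeeping step. Using the above formulae, $AS-2BT$ simplifies to $\tfrac{1}{2}\bigl(S^{q^s+1}-\epsilon\Delta^{(q^s+1)/2}\bigr)$ and $T+T^{q^s}$ to $\tfrac{1}{4}\bigl((\Delta-S^{2})+(\Delta^{q^s}-S^{2q^s})\bigr)$. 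The $S^{q^s+1}$, $S^{2}$ and $S^{2q^s}$ terms then regroup as $-(1-\alpha)(S^{q^s}-S)^{2}$; dividing by $1-\alpha\ne 0$ and recalling that $\beta=\epsilon(1+\alpha)/(1-\alpha)$, Condition (i) becomes
\[
\Delta+\Delta^{q^s}-(S^{q^s}-S)^{2}-2\beta\Delta^{(q^s+1)/2}=0,
\]
which upon the substitution $\Delta=\eta\bar{Z}^{2}$ is precisely the defining equation of $\mathcal{X}_{\delta,s}$ evaluated at $(\bar{S},\bar{Z})$, and thus holds by assumption.

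The step requiring the most care is the bookkeeping for (i) and (iv): both rest on combining $B^{2}=\Delta^{q^s-1}$ with $S^{2}+4T=\Delta$ and on invoking the $q^s$-th Frobenius on elements of the prime field. Once these identities are tracked correctly the whole verification is routine, and Theorem \ref{th:charact} delivers the conclusion.
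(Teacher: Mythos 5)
Your proposal is correct and follows essentially the same route as the paper: the paper's ``proof'' is the discussion preceding the proposition, which introduces exactly the substitutions $B=\epsilon\Delta^{\frac{q^s-1}{2}}$, $A=\frac{1}{2}(S^{q^s}-\epsilon S\Delta^{\frac{q^s-1}{2}})$, $T=\frac{\eta Z^2-S^2}{4}$ and dismisses the converse direction with ``then clearly there exist $T,S,A,B$ satisfying the assumptions of Theorem \ref{th:charact}''. You have simply carried out explicitly, and correctly, the verification of conditions (i)--(iv) that the paper leaves implicit.
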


With the same notation as above, notice that $\beta\ne1$ and $\beta\ne-1$, because $\delta\ne0$; see Remark \ref{rem:pseudo}.
Moreover, if $\beta=0$ then $\mathrm{N}_{q^n/q^{\frac{n}{2}}}(\delta)=-1$; when $n=8$, this condition on $\delta$ yields MRD codes $\mathcal{C}_{\delta,s}$, see Section \ref{sec:ourcodes}.
Therefore, for $n=8$ we can assume in our investigation that $\beta\notin\{0,1,-1\}$, and the following theorem follows from Proposition \ref{prop:condX}.

\begin{theorem}\label{th:anybeta}
Let $q$ be an odd prime power and $s\in\{1,3,5,7\}$.
Suppose that for any $\beta\in\mathbb{F}_{q^4}\setminus\{0,1,-1\}$ there exists a non-square $\eta$ of $\mathbb{F}_{q^4}$ such that the curve $\mathcal{X}_{\delta,s}$ with equation \eqref{eq:curveqodd_withs} has an $\mathbb{F}_{q^4}$-rational affine point $(\bar{s},\bar{z})$ with $\bar{z}\ne0$.
Then $\mathcal{C}_{\delta,s}$ is MRD if and only if $\delta\in\mathbb{F}_{q^8}^*$ satisfies $\mathrm{N}_{q^8/q^4}(\delta)=-1$.
\end{theorem}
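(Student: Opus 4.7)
The plan is to combine the standing hypothesis with results already collected in Section~\ref{sec:ourcodes} and Proposition~\ref{prop:condX}; no new algebro-geometric input is needed at this stage. The reverse implication (namely, $\mathrm{N}_{q^8/q^4}(\delta)=-1$ implies MRD) is precisely \cite[Theorem~7.2]{CMPZ}, as recalled in Section~\ref{sec:ourcodes}. So I would only have to handle the forward direction, and I would do so by contrapositive.

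Set $\alpha:=\mathrm{N}_{q^8/q^4}(\delta)\in\mathbb{F}_{q^4}^*$ and assume $\alpha\neq-1$. If $\alpha=1$, then the third bullet of Section~\ref{sec:ourcodes} already gives $\dim_{\fq}\ker f_{\delta,s}>1$, so $\mathcal{C}_{\delta,s}$ is not MRD. I may therefore assume $\alpha\notin\{1,-1\}$. Fix $\epsilon=1$ and put $\beta:=\frac{\alpha+1}{1-\alpha}\in\mathbb{F}_{q^4}$, which is exactly the value appearing in the derivation leading to Proposition~\ref{prop:condX}.

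The next step is a short bookkeeping check that this $\beta$ is admissible, i.e.\ $\beta\in\mathbb{F}_{q^4}\setminus\{0,1,-1\}$: the equation $\beta=0$ forces $\alpha=-1$, $\beta=1$ forces $\alpha=0$, and $\beta=-1$ forces $1=-1$; all three outcomes are excluded (the last because $q$ is odd, the first by our standing assumption, and $\alpha\neq0$ because $\delta\in\mathbb{F}_{q^8}^*$). The hypothesis of Theorem~\ref{th:anybeta}, applied to this admissible $\beta$, then furnishes a non-square $\eta\in\mathbb{F}_{q^4}$ together with an $\mathbb{F}_{q^4}$-rational affine point $(\bar s,\bar z)$ of the curve $\mathcal{X}_{\delta,s}$ of equation \eqref{eq:curveqodd_withs} with $\bar z\neq0$. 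Proposition~\ref{prop:condX} immediately yields that $\mathcal{C}_{\delta,s}$ is not MRD, completing the contrapositive.

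There is no genuine obstacle at this stage: the proof is a direct assembly of Proposition~\ref{prop:condX}, the case analysis of Section~\ref{sec:ourcodes} according to $\mathrm{N}_{q^8/q^4}(\delta)$, and the verification that the map $\alpha\mapsto\frac{\alpha+1}{1-\alpha}$ avoids the three forbidden values $\{0,1,-1\}$ on $\mathbb{F}_{q^4}^*\setminus\{1,-1\}$. The hard part of the overall project is not this reduction but rather the verification of the hypothesis of Theorem~\ref{th:anybeta}, i.e.\ producing, uniformly in $\beta\in\mathbb{F}_{q^4}\setminus\{0,1,-1\}$, a non-square $\eta$ for which $\mathcal{X}_{\delta,s}$ has an $\mathbb{F}_{q^4}$-rational affine point with $\bar z\neq0$; that task is addressed in Sections~\ref{sec:MRD} and~\ref{sec:lemmaqodd} through the auxiliary $3$-dimensional $\mathbb{F}_q$-rational variety $\mathcal{W}$ and the Lang--Weil estimate of Proposition~\ref{Th:CafureMatera}.
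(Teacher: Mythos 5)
Your proposal is correct and follows essentially the same route as the paper: the reverse implication is \cite[Theorem~7.2]{CMPZ}, and the forward direction is the contrapositive assembly of Proposition~\ref{prop:condX} with the check that $\beta=\epsilon\frac{\alpha+1}{1-\alpha}$ lands in $\mathbb{F}_{q^4}\setminus\{0,1,-1\}$ once $\alpha\notin\{0,1,-1\}$, exactly as in the paragraph preceding the theorem. (Only a cosmetic slip: the case $\alpha=1$ is handled by the second, not the third, bullet of Section~\ref{sec:ourcodes}.)
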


Theorem \ref{th:anybeta} will be the key tool for the proof of the first part of Theorem \ref{th:main} in the next section.
For the sake of completeness, we now describe how Proposition \ref{prop:condX} was used in \cite{PZZ} to study the MRD property for $\mathcal{C}_{\delta,s}$ for larger values of $n$. 

If $\beta\notin\{1,-1\}$, then the curve $\mathcal{X}_{\delta,s}$ is absolutely irreducible, and has genus $g(\mathcal{X}_{\delta,s})=q^{2s}-q^s-1$; see \cite[Theorem 3.7]{PZZ}.
Thus, the Hasse-Weil lower bound
\begin{equation}\label{eq:N}
N_{q^{n/2}} \geq q^{n/2}+1-2(q^{2s}-q^s-1)\sqrt{q^{n/2}}
\end{equation}
holds for the number $N_{q^{n/2}}$ of rational places of the $\mathbb{F}_{q^{n/2}}$-rational curve $\mathcal{X}_{\delta,s}$. If
\begin{equation}\label{eq:condPZZ}
n\geq\begin{cases}
        8s+4 & \mbox{if }q=3\mbox{ and }s>1,\\
        8s+2 & \mbox{otherwise},
\end{cases}
\end{equation}
then the condition \eqref{eq:N} implies that $N_{q^{n/2}}$ is positive and large enough, so that $\mathcal{X}_{\delta,s}$ has an $\mathbb{F}_{q^{n/2}}$-rational affine point $(\bar{s},\bar{z})$ with $\bar{z}\ne0$; see \cite[Proposition 3.8]{PZZ}.
By Proposition \ref{prop:condX}, this shows for any $\delta\in\mathbb{F}_{q^n}^*$ that under the condition \eqref{eq:condPZZ} the code $\mathcal{C}_{\delta,s}$ is not MRD; this is the statement of \cite[Theorem 1.1]{PZZ} for $q$ odd.

In this way the question on the MRD property for $\mathcal{C}_{\delta,s}$ is completely answered when $n$ is large enough with respect to $s$; for instance, when $s=1$ and $n\geq10$.
On the contrary, the right-hand side of \eqref{eq:condPZZ} is negative when $n=8$, and a different approach to the curve $\mathcal{X}_{\delta,s}$ is required. This is the object of Section \ref{sec:MRD}.

\section{Characterization of MRD codes $\mathcal{C}_{\delta,s}\subseteq\mathcal{L}_{8,q}$}\label{sec:MRD}

In this section we prove the first part of Theorem \ref{th:main}, namely Proposition \ref{prop:anys}.
Through the whole section the prime power $q$ is always assumed to be odd, so that we can consider the curve $\mathcal{X}_{\delta,s}$ in \eqref{eq:curveqodd_withs} and try to apply Theorem \ref{th:anybeta}.

We start by showing that we can restrict to the case $s=1$.

\begin{proposition}\label{prop:from1toanys}
Let $\delta\in\mathbb{F}_{q^8}^*$ with $\mathrm{N}_{q^8/q^4}(\delta)\ne1$. For any $s\in\{1,3,5,7\}$, the code $\mathcal{C}_{\delta,s}$ is equivalent to $\mathcal{C}_{\tilde{\delta},1}$ for some $\tilde{\delta}\in\mathbb{F}_{q^8}^*$ such that $\mathrm{N}_{q^8/q^4}(\tilde{\delta})=-1$ if and only if $\mathrm{N}_{q^8/q^4}(\delta)=-1$.
\end{proposition}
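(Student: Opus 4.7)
The plan is to reduce the four values $s\in\{1,3,5,7\}$ to the two cases covered by Proposition \ref{fequiv}, namely $s\in\{1,3\}$, and then translate the $\Gamma L$-equivalence criterion of Proposition \ref{fequiv} into code equivalence via Theorem \ref{JohnEquiv}.

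First I would perform the reduction for $s\in\{5,7\}$. Since we work with $\mathcal{L}_{8,q}$ modulo $x^{q^8}-x$, we have $x^{q^{4+5}}=x^{q^9}=x^q$ and $x^{q^{4+7}}=x^{q^{11}}=x^{q^3}$ as elements of $\mathcal{L}_{8,q}$. Hence
\[
f_{\delta,5}(x)=x^{q^5}+\delta x^{q}=\delta\bigl(x^q+\delta^{-1}x^{q^5}\bigr)\cdot\delta^{-1}\cdot\delta,
\]
so $\delta^{-1}f_{\delta,5}(x)=f_{\delta^{-1},1}(x)$, and since $\delta^{-1}\in\mathbb{F}_{q^8}^*$ does not change the $\mathbb{F}_{q^8}$-span, we get the identity (not just equivalence) $\mathcal{C}_{\delta,5}=\mathcal{C}_{\delta^{-1},1}$. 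An identical computation yields $\mathcal{C}_{\delta,7}=\mathcal{C}_{\delta^{-1},3}$. Because $\mathrm{N}_{q^8/q^4}(\delta^{-1})=\mathrm{N}_{q^8/q^4}(\delta)^{-1}$, the condition $\mathrm{N}_{q^8/q^4}(\delta)=-1$ is preserved under this reduction.

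Next I would handle $s\in\{1,3\}$ directly via Proposition \ref{fequiv}. For $s=s'=1$, the first alternative gives $\mathrm{N}_{q^8/q^4}(\delta)=\mathrm{N}_{q^8/q^4}(\tilde\delta)^\sigma$ for some $\sigma\in{\rm Aut}(\mathbb{F}_{q^4})$; for $s=3$, $s'=1$, we have $s+s'=4=n/2$, so the second alternative gives $\mathrm{N}_{q^8/q^4}(\delta)\cdot\mathrm{N}_{q^8/q^4}(\tilde\delta)^\sigma=1$. In both situations, since $(-1)^\sigma=-1$ for every automorphism, the condition $\mathrm{N}_{q^8/q^4}(\tilde\delta)=-1$ is equivalent to $\mathrm{N}_{q^8/q^4}(\delta)=-1$. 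Both hypotheses needed to invoke Proposition \ref{fequiv} ($\mathrm{N}_{q^8/q^4}(\delta)\ne1$ and $\mathrm{N}_{q^8/q^4}(\tilde\delta)\ne1$) are verified.

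Finally I would bridge $\Gamma L$-equivalence and code equivalence. For the ``if'' direction, assume $\mathrm{N}_{q^8/q^4}(\delta)=-1$. Since $\mathbb{F}_{q^4}^*$ is cyclic of even order $q^4-1$ and the norm $\mathrm{N}_{q^8/q^4}\colon\mathbb{F}_{q^8}^*\to\mathbb{F}_{q^4}^*$ is surjective, some $\tilde\delta\in\mathbb{F}_{q^8}^*$ satisfies $\mathrm{N}_{q^8/q^4}(\tilde\delta)=-1$. By \cite[Theorem 7.2]{CMPZ} (recalled in Section \ref{sec:ourcodes}), both $\mathcal{C}_{\delta,s}$ and $\mathcal{C}_{\tilde\delta,1}$ are MRD, so Theorem \ref{JohnEquiv} applies and turns the $\Gamma L$-equivalence provided by Proposition \ref{fequiv} into equivalence of codes (after the reduction step for $s\in\{5,7\}$). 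For the ``only if'' direction, if $\mathcal{C}_{\delta,s}\sim\mathcal{C}_{\tilde\delta,1}$ with $\mathrm{N}_{q^8/q^4}(\tilde\delta)=-1$, then $\mathcal{C}_{\tilde\delta,1}$ is MRD and so is $\mathcal{C}_{\delta,s}$; Theorem \ref{JohnEquiv} gives $\Gamma L$-equivalence of the corresponding $U$-subspaces, and Proposition \ref{fequiv} forces $\mathrm{N}_{q^8/q^4}(\delta)=-1$, using the reduction $\mathcal{C}_{\delta,5}=\mathcal{C}_{\delta^{-1},1}$ or $\mathcal{C}_{\delta,7}=\mathcal{C}_{\delta^{-1},3}$ when needed.

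No serious obstacle is expected; the proof is essentially bookkeeping around the equivalence results of Section \ref{sec:prelim}. The only care required is to remember that Proposition \ref{fequiv} is stated for $1\le s,s'<n/2$, which forces the polynomial reduction for $s\in\{5,7\}$ before one can invoke it.
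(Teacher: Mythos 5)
Your proposal is correct and follows essentially the same route as the paper: the identities $f_{\delta,5}=\delta f_{1/\delta,1}$ and $f_{\delta,7}=\delta f_{1/\delta,3}$ reduce $s\in\{5,7\}$ to $s\in\{1,3\}$, and the latter cases are settled by combining Proposition \ref{fequiv} with Theorem \ref{JohnEquiv}, using that $\sigma(-1)=-1$ and $\mathrm{N}_{q^8/q^4}(1/\delta)=\mathrm{N}_{q^8/q^4}(\delta)^{-1}$. Your version is in fact slightly more careful than the paper's, since you explicitly verify the MRD hypothesis needed to invoke Theorem \ref{JohnEquiv}.
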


\begin{proof}
For $s=1$ or $s=3$, the claim follows from Theorem \ref{JohnEquiv} and Proposition \ref{fequiv}, as $\sigma(-1)=-1$ for any $\sigma\in{\rm Aut}(\mathbb{F}_{q^4})$.

For $s=5$ we have $f_{\delta,5}(x)=\delta f_{1/\delta,1}(x)$ and hence $\mathcal{C}_{\delta,5}=\mathcal{C}_{1/\delta,1}$, while for $s=7$ we have $f_{\delta,7}(x)=\delta f_{1/\delta,3}(x)$ and hence $\mathcal{C}_{\delta,7}=\mathcal{C}_{1/\delta,3}$.
Since ${\rm N}_{q^8/q^{4}}(\delta)=-1$ if and only if ${\rm N}_{q^8/q^{4}}(1/\delta)=-1$, and the claim holds for $s=1$ and $s=3$, it follows that the claim holds also for $s=5$ and $s=7$.
\end{proof}

We can then assume from now on that $s=1$, so that the curve to be considered has equation
\begin{equation}\label{eq:curveqodd}
\mathcal{X}_{\delta,1}:\quad -(S^{q}-S)^2+\eta Z^2 + \eta^{q} Z^{2q} - 2\beta\eta^{\frac{q+1}{2}}Z^{q+1}=0,
\end{equation}
where $\beta\in\mathbb{F}_{q^4}\setminus\{0,1,-1\}$ and $\eta$ is a non-square in $\mathbb{F}_{q^4}$.


Let $\xi$ be a normal element of $\fqq$ over $\fq$, and write
\begin{equation}\label{eq:espressioni}
S=S_0\xi+S_1\xi^q+S_2\xi^{q^2}+S_3\xi^{q^3},\qquad Z=Z_0\xi+Z_1\xi^q+Z_2\xi^{q^2}+Z_3\xi^{q^3},
\end{equation}
Since $\mathcal{B}=\{\xi,\xi^q,\xi^{q^2},\xi^{q^3}\}$ is an $\mathbb{F}_q$-basis of $\mathbb{F}_{q^4}$, Equation \ref{eq:espressioni} gives a one-to-one correspondence $(S,Z)\mapsto(S_0,S_1,S_2,S_3,Z_0,Z_1,Z_2,Z_3)$ between $\mathbb{F}_{q^4}^2$ and $\mathbb{F}_q^8$.

Now plug $S,Z\in\mathbb{F}_{q^4}$ from Equation \eqref{eq:espressioni} into \eqref{eq:curveqodd} and write it as a zero $\fq$-linear combination
\[
f_0 \xi + f_1 \xi^q + f_2 \xi^{q^2} + f_3 \xi^{q^3} = 0
\]
in the basis $\mathcal{B}$. Since the left-hand side of \eqref{eq:curveqodd} is a quadratic form over $\fq$, the coefficients $f_i$ are homogeneous quadratic polynomials over $\fq$ in the $8$ indeterminates $S_i,Z_i$. Thus, the equations
\[
\mathcal{W}\colon\begin{cases}
f_0(S_0,S_1,S_2,S_3,Z_0,Z_1,Z_2,Z_3)=0,\\
f_1(S_0,S_1,S_2,S_3,Z_0,Z_1,Z_2,Z_3)=0,\\
f_2(S_0,S_1,S_2,S_3,Z_0,Z_1,Z_2,Z_3)=0,\\
f_3(S_0,S_1,S_2,S_3,Z_0,Z_1,Z_2,Z_3)=0\\
\end{cases}
\]
define an $\fq$-rational projective variety $\mathcal{W}$ in $\mathbb{P}^7$. We have shown that the $\fq$-rational points of $\mathcal{W}$ give information on the $\mathbb{F}_{q^4}$-rational points of $\mathcal{X}_{\delta,1}$, as follows.
\begin{lemma}\label{lem:W}
If $\mathcal{W}$ has an $\fq$-rational point $(S_0:S_1:S_2:S_3:Z_0:Z_1:Z_2:Z_3)$ with $Z_3\ne0$, then $\mathcal{X}_{\delta,1}$ has an $\mathbb{F}_{q^4}$-rational affine point $(S,Z)$ with $Z\ne0$, given by Equation \eqref{eq:espressioni}.
\end{lemma}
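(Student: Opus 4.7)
The lemma is essentially an unpacking of the construction of $\mathcal{W}$ from $\mathcal{X}_{\delta,1}$, so the plan is to reverse that construction step by step. First I would lift the projective $\mathbb{F}_q$-rational point to an affine representative $(S_0,\ldots,Z_3)\in\mathbb{F}_q^8\setminus\{0\}$; this is possible by definition of $\mathbb{F}_q$-rationality, and because the polynomials $f_0,f_1,f_2,f_3$ are homogeneous (of degree $2$) the choice of scaling is irrelevant.

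Next I would define $(S,Z)\in\mathbb{F}_{q^4}^2$ through the bijection \eqref{eq:espressioni}. The condition $Z\neq 0$ is immediate: since $\{\xi,\xi^q,\xi^{q^2},\xi^{q^3}\}$ is an $\mathbb{F}_q$-basis of $\mathbb{F}_{q^4}$, and $Z_3\neq 0$ (while all $Z_i$ lie in $\mathbb{F}_q$), the expression $Z=Z_0\xi+Z_1\xi^q+Z_2\xi^{q^2}+Z_3\xi^{q^3}$ is a non-trivial $\mathbb{F}_q$-linear combination of basis elements and hence non-zero.

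The remaining point is that $(S,Z)$ satisfies \eqref{eq:curveqodd}. By the very construction of the polynomials $f_i$, substituting $S,Z$ from \eqref{eq:espressioni} into the left-hand side of \eqref{eq:curveqodd} yields exactly
\[
f_0(S_0,\ldots,Z_3)\,\xi + f_1(S_0,\ldots,Z_3)\,\xi^q + f_2(S_0,\ldots,Z_3)\,\xi^{q^2} + f_3(S_0,\ldots,Z_3)\,\xi^{q^3}.
\]
Since the chosen representative satisfies $f_i(S_0,\ldots,Z_3)=0$ for $i=0,1,2,3$, this $\mathbb{F}_q$-linear combination of basis elements is zero in $\mathbb{F}_{q^4}$. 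Thus $(S,Z)$ is an $\mathbb{F}_{q^4}$-rational affine point of $\mathcal{X}_{\delta,1}$ with $Z\neq 0$.

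There is no real obstacle here: the argument is just the converse direction of the decomposition procedure already spelled out in the paragraphs preceding the lemma. The only subtlety worth flagging explicitly in the write-up is that the argument uses in both directions the $\mathbb{F}_q$-linear independence of $\{\xi,\xi^q,\xi^{q^2},\xi^{q^3}\}$ (once to equate the vanishing of the $\mathbb{F}_{q^4}$-valued expression with the simultaneous vanishing of the four $\mathbb{F}_q$-valued polynomials $f_i$, and once to deduce $Z\neq 0$ from $Z_3\neq 0$).
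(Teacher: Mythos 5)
Your proof is correct and follows exactly the route the paper intends: the paper states Lemma \ref{lem:W} as an immediate consequence of the construction of the $f_i$ from Equation \eqref{eq:espressioni}, and your write-up simply makes that reversal explicit (lifting the projective point, using homogeneity of the $f_i$ to dispense with the choice of representative, and invoking the $\mathbb{F}_q$-linear independence of $\{\xi,\xi^q,\xi^{q^2},\xi^{q^3}\}$ both to recover the curve equation and to get $Z\ne0$ from $Z_3\ne0$). No gaps.
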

By Theorem \ref{th:anybeta} and Lemma \ref{lem:W}, it is enough to show that $\mathcal{W}$ has an $\fq$-rational point for any $\beta\notin\{0,1,-1\}$ and for some non-square $\eta$ of $\mathbb{F}_{q^4}$. To do this, we will prove that $\mathcal{W}$ has an $\fq$-rational absolutely irreducible component. To this aim, we will study the absolutely irreducible components of another variety $\mathcal{V}$ which is projectively equivalent to $\mathcal{W}$.

Consider the Moore matrix
\[ M=
\begin{pmatrix}
\xi & \xi^q & \xi^{q^2} & \xi^{q^3} \\
\xi^{q} & \xi^{q^2} & \xi^{q^3} & \xi \\
\xi^{q^2} & \xi^{q^3} & \xi & \xi^{q} \\
\xi^{q^3} & \xi & \xi^q & \xi^{q^2} \\
\end{pmatrix}
\]
over $\mathbb{F}_{q^4}$. Since $\mathcal{B}$ is an $\fq$-basis of $\fqq$, we have $\det(M)\ne0$; see \cite{Moore}.
Thus the map $\varphi$ defined by
\[
(S_0:S_1:S_2:S_3:Z_0:Z_1:Z_2:Z_3)\mapsto(X_0:X_1:X_2:X_3:Y_0:Y_1:Y_2:Y_3):= (S_0,S_1,S_2,S_3,Z_0,Z_1,Z_2,Z_3)\cdot
\begin{pmatrix}
M & 0 \\ 0 & M \\
\end{pmatrix}
\]
is an $\fqq$-rational projectivity of $\mathbb{P}^7$.

Whenever the coordinates $S_i,Z_i$ of a point $P\in\mathbb{P}^7$ are in $\fq$ and $S,Z$ are defined as in \eqref{eq:espressioni}, the coordinates $X_i,Y_i$ of $\varphi(P)$ satisfy $X_i=S^{q^i}$ and $Y_i=Z^{q^i}$.
Therefore, the equations defining the image $\varphi(\mathcal{W})$ are obtained by applying the $q^j$-power to Equation \eqref{eq:curveqodd} with $j=0,\ldots,3$, and replacing $S^{q^i},Z^{q^i}$ with $X_i,Y_i$.
One gets
\[ \varphi(\mathcal{W}):\begin{cases}
(X_1-X_0)^2 = \eta Y_0^2 + \eta^{q} Y_1^2 - 2\beta\eta^{\frac{q+1}{2}}Y_0 Y_1,\\
(X_2-X_1)^2 = \eta^q Y_1^2 + \eta^{q^2} Y_2^2 - 2\beta^q\eta^{\frac{q^2+q}{2}}Y_1 Y_2,\\
(X_3-X_2)^2 = \eta^{q^2} Y_2^2 + \eta^{q^3} Y_3^2 - 2\beta^{q^2}\eta^{\frac{q^3+q^2}{2}}Y_2 Y_3,\\
(X_0-X_3)^2 = \eta^{q^3} Y_3^2 + \eta Y_0^2 - 2\beta^{q^3}\eta^{\frac{1+q^3}{2}}Y_3 Y_0.\\
\end{cases} \]
Clearly $\varphi(\mathcal{W})$ is defined over $\fqq$, since $\varphi$ is $\fqq$-rational and $\mathcal{W}$ is defined over $\fq$.

Let $\psi:\mathbb{P}^7\to\mathbb{P}^7$ be the $\fq$-rational projectivity defined by
\[
(X_0:X_1:X_2:X_3:Y_0:Y_1:Y_2:Y_3)\mapsto (W_0=X_1-X_0:W_1=X_2-X_1:W_2=X_3-X_2:X_3:Y_0:Y_1:Y_2:Y_3).
\] 
Then the variety $\mathcal{V}:=\psi(\varphi(\mathcal{W}))$ is defined over $\fqq$ and has equations
\begin{equation}\label{eq:V}
\mathcal{V}:\begin{cases}
W_0^2 = \eta Y_0^2 + \eta^{q} Y_1^2 - 2\beta\eta^{\frac{q+1}{2}}Y_0 Y_1,\\
W_1^2 = \eta^q Y_1^2 + \eta^{q^2} Y_2^2 - 2\beta^q\eta^{\frac{q^2+q}{2}}Y_1 Y_2,\\
W_2^2 = \eta^{q^2} Y_2^2 + \eta^{q^3} Y_3^2 - 2\beta^{q^2}\eta^{\frac{q^3+q^2}{2}}Y_2 Y_3,\\
(W_0+W_1+W_2)^2 = \eta^{q^3} Y_3^2 + \eta Y_0^2 - 2\beta^{q^3}\eta^{\frac{1+q^3}{2}}Y_3 Y_0.\\
\end{cases}
\end{equation}

Note that $\mathcal{V}$ is a cone with vertex the point $(0:0:0:1:0:0:0:0)$.

We will use the following lemma to get information on $\mathcal{W}$ from the components of $\mathcal{V}$.

\begin{lemma}\label{lemma:fromVtoW}
Suppose that $\mathcal{V}$ has an absolutely irreducible component $\mathcal{V}^{\prime}$ of dimension $m$ and degree $d$, such that no other absolutely irreducible component of $\mathcal{V}$ has dimension $m$ and degree $d$. Then $\mathcal{W}$ has an absolutely irreducible component $\mathcal{W}^{\prime}$ which is defined over $\fq$ and has dimension $m$ and degree $d$.
\end{lemma}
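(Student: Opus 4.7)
The plan is to exploit Galois descent. Let $\Phi := \psi\circ\varphi$, so that $\Phi$ is an $\mathbb{F}_{q^4}$-rational projectivity of $\mathbb{P}^7$ with $\mathcal{V}=\Phi(\mathcal{W})$. Over $\overline{\mathbb{F}}_q$, $\Phi$ is a biregular isomorphism, so pulling back yields a bijection between the absolutely irreducible components of $\mathcal{V}$ and those of $\mathcal{W}$ that preserves both dimension and degree. Setting $\mathcal{W}':=\Phi^{-1}(\mathcal{V}')$, this produces an absolutely irreducible component of $\mathcal{W}$ of dimension $m$ and degree $d$. The uniqueness hypothesis on $\mathcal{V}'$ transfers along the same bijection: no other absolutely irreducible component of $\mathcal{W}$ has simultaneously dimension $m$ and degree $d$.

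It then remains to argue that $\mathcal{W}'$ is defined over $\mathbb{F}_q$. Since $\mathcal{W}$ itself is $\mathbb{F}_q$-rational, the $q$-power Frobenius $\Phi_q$, acting coordinatewise on $\mathbb{P}^7(\overline{\mathbb{F}}_q)$, preserves the point set of $\mathcal{W}$ and therefore permutes its set of absolutely irreducible components. Because $\Phi_q$ acts on defining polynomials by raising their coefficients to the $q$-th power, it preserves the dimension and degree of each component. By the uniqueness just established, $\Phi_q$ must fix $\mathcal{W}'$ setwise. A standard descent argument then produces the conclusion: the homogeneous ideal of $\mathcal{W}'$ in $\overline{\mathbb{F}}_q[X_0,\ldots,X_7]$ is stable under the coefficient-wise Frobenius, hence it is generated by its intersection with $\mathbb{F}_q[X_0,\ldots,X_7]$, so that $\mathcal{W}'$ is $\mathbb{F}_q$-rational.

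There is no serious obstacle; the proof is essentially bookkeeping in Galois descent. The subtle point worth flagging is that $\Phi$ itself is \emph{not} $\mathbb{F}_q$-rational, because the Moore matrix $M$ has entries in $\mathbb{F}_{q^4}\setminus\mathbb{F}_q$. As a consequence, one cannot directly push $\mathcal{V}'$ back to an $\mathbb{F}_q$-rational subvariety of $\mathcal{W}$ through $\Phi^{-1}$. The uniqueness of the pair $(m,d)$ among components of $\mathcal{V}$ (equivalently, of $\mathcal{W}$) is precisely what bypasses this, by forcing Frobenius invariance of $\mathcal{W}'$ and thereby supplying the descent.
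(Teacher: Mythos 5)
Your proposal is correct and follows essentially the same route as the paper: transport $\mathcal{V}'$ back through $(\psi\circ\varphi)^{-1}$ to get the unique component of $\mathcal{W}$ with invariants $(m,d)$, observe that the $q$-Frobenius permutes the components of the $\mathbb{F}_q$-rational variety $\mathcal{W}$ preserving dimension and degree, and conclude by uniqueness that $\mathcal{W}'$ is Frobenius-stable, hence $\mathbb{F}_q$-rational. The only difference is that you spell out the final Galois-descent step (the ideal being generated by its intersection with $\mathbb{F}_q[X_0,\ldots,X_7]$) which the paper leaves implicit.
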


\begin{proof}
Recall that projectivities between varieties preserve absolute irreducibility, dimension and degree of the components.
Since $\mathcal{W}$ is projectively equivalent to $\mathcal{V}$ through the projectivity $\pi:=(\psi\circ\varphi)^{-1}$, the variety $\mathcal{W}^{\prime}:=\pi(\mathcal{V}^{\prime})$ is the only absolutely irreducible component of $\mathcal{W}$ with dimension $m$ and degree $d$.

Let $\Phi_q:P\mapsto P^q$ be the $q$-Frobenius map, which raises the coordinates of a point to power $q$.
Since $\mathcal{W}$ is defined over $\fq$, $\Phi_q$ preserves $\mathcal{W}$. Being a collineation, $\Phi_q$ maps absolutely irreducible components of $\mathcal{W}$ to absolutely irreducible components of $\mathcal{W}$ with the same dimension and degree.
Since $\mathcal{W}^{\prime}$ is the only absolutely irreducible component of $\mathcal{W}$ with dimension $d$ and degree $m$, $\Phi_q$ preserves $\mathcal{W}^{\prime}$. Therefore, $\mathcal{W}^{\prime}$ is defined over $\fq$.
\end{proof}

The dimension and degree of $\mathcal{V}$ are given in Lemma \ref{lemma:V}, whose proof is postponed to Section \ref{sec:V}.

\begin{lemma}\label{lemma:V}
The variety $\mathcal{V}\subset\mathbb{P}^7$ has dimension $3$ and degree $16$.
\end{lemma}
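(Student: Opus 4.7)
The plan is as follows. By Proposition~\ref{prop:sha}(i), the variety $\mathcal{V}\subseteq\mathbb{P}^7$ is cut out by four equations and hence $\dim\mathcal{V}\geq 3$; I will match this with the upper bound $\dim\mathcal{V}\leq 3$, and once equality is established the variety is automatically a complete intersection of four quadrics whose degree is $2^4=16$ by Bezout's theorem.

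To bound the dimension from above, I exploit the fact that the coordinate $X_3$ does not appear in any of the defining polynomials of \eqref{eq:V}, so $\mathcal{V}$ is a cone with vertex $(0{:}0{:}0{:}1{:}0{:}0{:}0{:}0)$ over the subvariety $\widetilde{\mathcal{V}}\subset\mathbb{P}^6$ defined by the same four equations in the coordinates $(W_0{:}W_1{:}W_2{:}Y_0{:}Y_1{:}Y_2{:}Y_3)$, and it suffices to show $\dim\widetilde{\mathcal{V}}\leq 2$. Consider the projection $\pi\colon\widetilde{\mathcal{V}}\to\mathbb{P}^3$ onto $(Y_0{:}Y_1{:}Y_2{:}Y_3)$. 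Its center $\{Y_0=Y_1=Y_2=Y_3=0\}$ meets $\widetilde{\mathcal{V}}$ only where $W_0=W_1=W_2=0$, which is not a point of $\mathbb{P}^6$; so $\pi$ is everywhere defined and each $W_i$ is a root of a monic quadratic over the $Y_j$'s, whence $\pi$ is a finite morphism and $\dim\widetilde{\mathcal{V}}=\dim\pi(\widetilde{\mathcal{V}})$. Denote by $R_0,R_1,R_2,R_3$ the right-hand sides of the four equations in \eqref{eq:V}, and set $s=W_0+W_1+W_2$, $\sigma=W_0W_1+W_0W_2+W_1W_2$. Then $s^2=\sum_iW_i^2+2\sigma$ combined with the fourth equation forces $\sigma=(R_3-R_0-R_1-R_2)/2$, while $\sigma^2-2s\,W_0W_1W_2=\sum_{i<j}W_i^2 W_j^2$, together with $(W_0W_1W_2)^2=R_0R_1R_2$ and $s^2=R_3$, gives after squaring the $W_i$-free relation
\[
F(Y_0,Y_1,Y_2,Y_3):\quad \bigl((R_3-R_0-R_1-R_2)^2-4(R_0R_1+R_0R_2+R_1R_2)\bigr)^2=64\,R_0R_1R_2R_3,
\]
a homogeneous condition of degree $8$ in $(Y_0,Y_1,Y_2,Y_3)$ which every point of $\pi(\widetilde{\mathcal{V}})$ must satisfy. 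A direct evaluation at the tuple $(Y_0,Y_1,Y_2,Y_3)=(1,1,0,0)$ collapses the left-hand side minus the right-hand side of $F$ to $16\,\eta^{2(q+1)}(\beta^2-1)^2$, which is non-zero for every $\beta\in\mathbb{F}_{q^4}\setminus\{0,1,-1\}$; hence $F$ is not identically zero, $\pi(\widetilde{\mathcal{V}})$ is contained in a proper hypersurface of $\mathbb{P}^3$, and $\dim\pi(\widetilde{\mathcal{V}})\leq 2$, yielding $\dim\mathcal{V}\leq 3$.

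Combining both bounds gives $\dim\mathcal{V}=3$. Since $\mathcal{V}$ then has the expected codimension $4$ in $\mathbb{P}^7$ as the common zero locus of four degree-$2$ hypersurfaces, the four defining equations form a regular sequence in the homogeneous coordinate ring of $\mathbb{P}^7$, so $\mathcal{V}$ is a complete intersection and Bezout's theorem yields $\deg\mathcal{V}=2\cdot 2\cdot 2\cdot 2=16$. The main technical hurdle is the non-vanishing of the elimination polynomial $F$ uniformly in $\beta\in\mathbb{F}_{q^4}\setminus\{0,1,-1\}$; the key simplification is that a single lucky specialisation $(1,1,0,0)$ reduces $F$ to $16\,\eta^{2(q+1)}(\beta^2-1)^2$, bypassing any need to analyse the full degree-$8$ form.
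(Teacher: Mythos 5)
Your proof is correct, and for the dimension upper bound it takes a genuinely different route from the paper. The paper also gets $\dim\mathcal{V}\geq 3$ from Proposition \ref{prop:sha}(i), but for the reverse inequality it invokes Proposition \ref{prop:sha}(ii): it exhibits the explicit $3$-dimensional linear space $X_3=Y_0=Y_1=Y_2+Y_3=0$ and checks it is disjoint from $\mathcal{V}$, the verification boiling down to $\beta^{2q^2}\neq 1$. You instead exploit the cone structure over the vertex $(0{:}0{:}0{:}1{:}0{:}0{:}0{:}0)$, observe that the projection onto $(Y_0{:}Y_1{:}Y_2{:}Y_3)$ has center disjoint from the base of the cone and is therefore finite, and eliminate $W_0,W_1,W_2$ via the elementary symmetric functions to land the image inside the degree-$8$ hypersurface $F=0$; the specialisation $(1,1,0,0)$ (where $R_2=R_2(1,1,0,0)=0$ kills the right-hand side) reduces non-vanishing of $F$ to $\beta^2\neq 1$, which is even slightly cleaner than the paper's condition. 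I checked the symmetric-function identities and the evaluation, and they are correct. The paper's argument is shorter because the disjointness check is a small direct computation, while yours is more constructive: it produces an explicit equation for a hypersurface of $\mathbb{P}^3$ containing the image of $\mathcal{V}$, which in principle carries more information about the variety. Both arguments use $q$ odd and $\beta\neq\pm1$ in an essential way. The degree computation is identical in the two proofs (having the expected codimension, $\mathcal{V}$ is a complete intersection of four quadrics, of degree $2^4=16$); only note that your $\widetilde{\mathcal{V}}$ clashes notationally with the component $\tilde{\mathcal{V}}$ introduced in the proof of Lemma \ref{lemma:reduciblecurve}, so you should rename it if this were to be inserted into the paper.
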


We first consider the case $\beta^{2q}\ne\beta^2$, starting by the intersection of $\mathcal{V}$ with a suitable hyperplane. The proof of Lemma \ref{lemma:irreduciblecurve} is postponed to Section \ref{sec:lemmairred}.

\begin{lemma}\label{lemma:irreduciblecurve}
Let $\beta\in\fqq$ be such that $\beta^{2q}\ne\beta^{2}$.
For some non-square $\eta$ of $\fqq$ and infinitely many elements $k\in\KK$, the intersection between $\mathcal{V}$ and the hyperplane $\Pi$ with affine equation $Y_2=Y_1+k$ is a non-repeated absolutely irreducible surface $\mathcal{S}$.
\end{lemma}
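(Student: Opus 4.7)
The plan is to substitute the affine hyperplane equation $Y_2 = Y_1 + k$ into the four defining polynomials of $\mathcal{V}$ in \eqref{eq:V} and study the resulting surface $\mathcal{S} = \mathcal{V} \cap \Pi$ in $\mathbb{A}^6$ with coordinates $(W_0, W_1, W_2, Y_0, Y_1, Y_3)$. The crucial structural observation is that after substitution the second equation of $\mathcal{V}$ involves only $W_1$ and $Y_1$:
\[
W_1^2 = A\,Y_1^2 + 2 B k\, Y_1 + C k^2,
\]
with $A = \eta^q + \eta^{q^2} - 2\beta^q \eta^{(q+q^2)/2}$, $B = \eta^{q^2} - \beta^q \eta^{(q+q^2)/2}$, $C = \eta^{q^2}$, satisfying $AC - B^2 = \eta^{q+q^2}(1 - \beta^{2q})$. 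Since $\beta \in \fqq \setminus \{0, 1, -1\}$ forces $\beta^q \neq \pm 1$, this discriminant is non-zero, so for every non-square $\eta \in \fqq^*$ and every $k \neq 0$ the equation above cuts out an absolutely irreducible affine conic $\mathcal{C}_0 \subset \mathbb{A}^2_{(W_1, Y_1)}$. This yields a natural dominant projection $\pi \colon \mathcal{S} \to \mathcal{C}_0$ that will be used to analyse $\mathcal{S}$ fibrewise.

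I would then show that for a suitable fixed non-square $\eta$ and for all but finitely many $k \in \KK$, the geometric generic fibre of $\pi$ is an absolutely irreducible curve. Over a generic point $(W_1^{*}, Y_1^{*}) \in \mathcal{C}_0$, the fibre is the variety in $\mathbb{A}^4_{(W_0, W_2, Y_0, Y_3)}$ cut out by the first, third, and fourth equations of \eqref{eq:V} with $Y_1 = Y_1^{*}$ and $Y_2 = Y_1^{*} + k$. A computation analogous to the one above gives that the first equation defines a plane conic in $(W_0, Y_0)$ of discriminant $\eta^{q+1}(Y_1^{*})^2(1 - \beta^2)$, the third defines a plane conic in $(W_2, Y_3)$ of discriminant $\eta^{q^2 + q^3}(Y_1^{*}+k)^2(1 - \beta^{2q^2})$, and both are absolutely irreducible for generic $Y_1^{*}$ (the non-vanishings $\beta^2\neq 1$ and $\beta^{2q^2}\neq 1$ follow again from $\beta \in \fqq\setminus\{0,\pm 1\}$, since a root of unity of order dividing $\gcd(2q^2,q^4-1)$ must equal $\pm 1$). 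Rationally parameterising these two conics by parameters $t,u\in\mathbb{P}^1$ converts the fourth equation into a bidegree-$(2,2)$ polynomial equation $F(t,u)=0$ cutting out a curve $\mathcal{D} \subset \mathbb{P}^1 \times \mathbb{P}^1$. Absolute irreducibility of $\mathcal{D}$ would be verified by specialising $(\eta, k, Y_1^{*})$ to explicit numerical values and checking by direct factorisation; irreducibility at one such specialisation implies irreducibility of the generic fibre by semi-continuity. The assumption $\beta^{2q}\neq\beta^2$ enters here to exclude degenerate factorisations of $F(t,u)$ that would arise precisely when $\beta^q\in\{\beta,-\beta\}$.

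Once both the base $\mathcal{C}_0$ and the geometric generic fibre of $\pi$ are absolutely irreducible, $\mathcal{S}$ admits a unique absolutely irreducible two-dimensional component $\mathcal{S}_0$ dominating $\mathcal{C}_0$. Any other two-dimensional component of $\mathcal{S}$ (note $\dim\mathcal{S}\ge 6-4=2$ by Proposition~\ref{prop:sha}(i)) would lie inside $\pi^{-1}(F)$ for some finite $F\subset\mathcal{C}_0$, forcing some fibre to be at least $2$-dimensional, which contradicts the explicit one-dimensional description of generic fibres above; hence $\mathcal{S}=\mathcal{S}_0$ is absolutely irreducible. For the ``non-repeated'' property one verifies, via the Jacobian criterion, that the $4\times 6$ Jacobian matrix of the four quadrics defining $\mathcal{S}$ has maximal rank $4$ at a general point of $\mathcal{S}$; alternatively, since $\mathcal{V}$ is a cone with vertex $(0\!:\!0\!:\!0\!:\!1\!:\!0\!:\!0\!:\!0\!:\!0)\notin\Pi$ for $k\neq 0$, the scheme $\mathcal{S}$ is isomorphic to the reduced base of the cone in $\mathbb{P}^6$. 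The main technical obstacle is the absolute irreducibility of $\mathcal{D}$: a bidegree-$(2,2)$ curve can in principle split as $(1,1)+(1,1)$, $(2,1)+(0,1)$ or $(2,0)+(0,2)$, or contain a repeated component, and ruling out each such pattern generically in the parameters constitutes the delicate computational core of the proof, for which a specialisation argument handled by computer algebra is the most efficient route.
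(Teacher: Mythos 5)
Your decomposition is genuinely different from the paper's: you fibre $\mathcal{S}$ over the conic $\mathcal{C}_0$ in $(W_1,Y_1)$ with fibres that are bidegree-$(2,2)$ curves in the product of the $(W_0,Y_0)$- and $(W_2,Y_3)$-conics, whereas the paper builds $\KK(y_1)\subset\KK(y_1,w_0)\subset\KK(y_1,w_0,w_1)\subset\KK(y_1,w_0,w_1,w_2)$ as a tower of four quadratic Kummer extensions of the $y_1$-line, with $y_3$ recovered rationally. Both routes are legitimate in outline, and your reduction of the fourth quadric to a $(1,1)$-form on the product of the two conics (hence $(2,2)$ in the parameters $t,u$) is correct. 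However, the proof has a genuine gap exactly at its core: the absolute irreducibility of the $(2,2)$ curve $\mathcal{D}$ is the entire difficulty of the lemma, and the method you propose for it --- specialising $(\eta,k,Y_1^{*})$ to numerical values, factoring, and invoking semicontinuity --- cannot establish the statement. Semicontinuity only transfers irreducibility across an irreducible parameter space over a \emph{fixed} ground situation; here the defining polynomials change with $q$ (the exponents $q,q^2,q^3$ sit in the equations), $\beta$ ranges over the finite set $\fqq$ subject to the Frobenius relations linking $\beta,\beta^q,\beta^{q^2},\beta^{q^3}$, and $\eta$ is constrained to be a non-square. A factorisation check at one numerical instance proves nothing for general $q$ and general admissible $\beta,\eta$. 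What is actually needed is a symbolic argument, uniform in $q$, showing that every putative splitting pattern of $\mathcal{D}$ forces polynomial identities in $\beta,\beta^q,\beta^{q^2},\beta^{q^3},\eta,\dots$ that contradict $\beta^{2q}\ne\beta^2$; this is precisely what the paper's resultant and Laurent-series computations in its Section 5.2 (steps (i)--(iv)) accomplish, and it is the part you have deferred rather than done. Your assertion that degenerate factorisations occur ``precisely when $\beta^q\in\{\beta,-\beta\}$'' is the conclusion one must prove, not a premise.

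Two secondary problems should also be fixed. First, your charts are inconsistent: for the second quadric to involve only $(W_1,Y_1)$ after substituting $Y_2=Y_1+k$ while the first quadric remains a conic in the two variables $(W_0,Y_0)$, you must be dehomogenising with respect to $X_3$, i.e.\ taking $\Pi\colon Y_2=Y_1+kX_3$; the paper's $\Pi$ is $Y_2=Y_1+kY_0$ (it dehomogenises at $Y_0=0$), in which chart the first equation with $Y_1$ fixed gives only two values of $W_0$ and your fibre picture collapses. Your choice can be made to work (the downstream Proposition on the components of $\mathcal{V}$ only needs \emph{some} such hyperplane, and yours misses the cone vertex, which is even convenient), but you must commit to it explicitly and redo the bookkeeping of which of $Y_0,X_3$ is a fibre variable. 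Second, ruling out extra two-dimensional components of $\mathcal{S}$ is not achieved by your dimension remark: a component with finite image in $\mathcal{C}_0$ would produce a special fibre of dimension $2$, which does not contradict the generic fibre being one-dimensional; and the claim that $\mathcal{S}$ is ``isomorphic to the reduced base of the cone'' assumes the reducedness you are trying to prove. The paper avoids both issues because exhibiting the coordinate ring inside a function field of transcendence degree $2$ simultaneously yields irreducibility and non-repeatedness.
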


Lemma \ref{lemma:irreduciblecurve} allows us to find in Proposition \ref{prop:componenti} a suitable component of $\mathcal{V}$.

\begin{proposition}\label{prop:componenti}
Let $\beta\in\mathbb{F}_{q^4}$ be such that $\beta^{2q}\ne\beta^2$.
For some non-square $\eta$ of $\fqq$, $\mathcal{V}$ contains a unique absolutely irreducible component $\mathcal{V}^{\prime}$ of dimension $3$ and degree $16$.
\end{proposition}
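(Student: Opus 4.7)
The plan is to combine Lemmas \ref{lemma:V} and \ref{lemma:irreduciblecurve} through a generic hyperplane section argument. Lemma \ref{lemma:V} gives the ambient dimension and degree of $\mathcal{V}$, while Lemma \ref{lemma:irreduciblecurve} provides an infinite family of hyperplanes each slicing $\mathcal{V}$ in a non-repeated absolutely irreducible surface, and this will already rule out any genuine reducibility at the top dimension and pin down the degree.

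First I would fix the non-square $\eta\in\fqq$ produced by Lemma \ref{lemma:irreduciblecurve} and let $I\subseteq\KK$ denote the infinite set of values of $k$ for which $\Pi_k:Y_2=Y_1+k$ meets $\mathcal{V}$ in a non-repeated absolutely irreducible surface $\mathcal{S}_k$. To prove the uniqueness of a $3$-dimensional absolutely irreducible component of $\mathcal{V}$, I would argue by contradiction: suppose $\mathcal{V}_1\neq\mathcal{V}_2$ were two such components. The linear form $Y_2-Y_1$ restricted to each $\mathcal{V}_i$ is either non-constant (in which case no $\Pi_k$ contains $\mathcal{V}_i$) or identically equal to a single value $k_i$. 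Hence only finitely many $k\in\KK$ are \emph{bad}, and any $k\in I$ outside this finite set would produce two distinct $2$-dimensional subvarieties $\mathcal{V}_i\cap\Pi_k$ of $\mathcal{S}_k$, contradicting the absolute irreducibility of $\mathcal{S}_k$. Therefore $\mathcal{V}$ admits a unique top-dimensional absolutely irreducible component $\mathcal{V}'$.

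For the degree of $\mathcal{V}'$, I would take any $k\in I$ such that $\Pi_k\not\supseteq\mathcal{V}'$; then $\mathcal{V}'\cap\Pi_k$ is a pure $2$-dimensional subscheme contained in $\mathcal{S}_k$, so by irreducibility of $\mathcal{S}_k$ equality holds set-theoretically. Since the hyperplane section is proper, the schematic degree is preserved and $\deg\mathcal{S}_k=\deg\mathcal{V}=16$ by Lemma \ref{lemma:V}. Combining this with $\deg\mathcal{V}'=\deg(\mathcal{V}'\cap\Pi_k)$ (again by properness of the hyperplane section of the irreducible variety $\mathcal{V}'$) yields $\deg\mathcal{V}'=16$.

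The main subtle point is the \emph{non-repeatedness} clause in Lemma \ref{lemma:irreduciblecurve}: it guarantees that $\mathcal{V}'$ appears with scheme-theoretic multiplicity $1$ in $\mathcal{V}$, so that the degree $16$ of the complete intersection $\mathcal{V}$ is realized entirely by its reduced top-dimensional component. Without this, one could only deduce $\deg\mathcal{V}'\leq 16$, and a smaller-degree reduced component embedded in a non-reduced scheme would be a priori possible. Granted Lemma \ref{lemma:irreduciblecurve}, the proposition is then a clean corollary purely at the geometric level, with no further computation in the defining equations \eqref{eq:V}.
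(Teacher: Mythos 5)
Your proposal is correct and follows essentially the same route as the paper: intersect $\mathcal{V}$ with a hyperplane $\Pi_k$ supplied by Lemma \ref{lemma:irreduciblecurve}, note that any $3$-dimensional component meets $\Pi_k$ in dimension at least $2$, derive a contradiction with $\mathcal{S}_k$ being non-repeated and absolutely irreducible, and read off the degree from Lemma \ref{lemma:V}. The one small slip is that you assert the two intersections $\mathcal{V}_1\cap\Pi_k$ and $\mathcal{V}_2\cap\Pi_k$ are \emph{distinct} without justification; if they happened to coincide you would instead be contradicting the non-repeatedness of $\mathcal{S}_k$, which is exactly the dichotomy the paper spells out, so the gap is repaired by the hypothesis you already invoke. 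Your explicit attention to the multiplicity of $\mathcal{V}'$ inside the complete intersection when passing from $\deg\mathcal{V}=16$ to $\deg\mathcal{V}'=16$ is a slightly more careful rendering of the paper's one-line conclusion.
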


\begin{proof}
Let $\eta$, $k$ and $\Pi$ be as in the claim of Lemma \ref{lemma:irreduciblecurve}, so that $\mathcal{S}:=\mathcal{V}\cap\Pi$ is a non-repeated absolutely irreducible surface.
Suppose by contradiction that $\mathcal{V}$ has more than one absolutely irreducible component of dimension $3$, and let $\mathcal{V}_1$ and $\mathcal{V}_2$ be two of them. By Proposition \ref{prop:sha}(iii), both $\Pi\cap\mathcal{V}_1$ and $\Pi\cap\mathcal{V}_2$ have dimension at least $2$,
and hence $\mathcal{S}=\Pi\cap\mathcal{V}\supseteq (\Pi\cap \mathcal{V}_1) \cup (\Pi\cap \mathcal{V}_2)$ contains two distinct components of dimension $2$, or a repeated component of dimension $2$. This is a contradiction to $\mathcal{S}$ being non-repeated and absolutely irreducible.
Therefore $\mathcal{V}$ has a unique absolutely irreducible component $\mathcal{V}^{\prime}$ of dimension $3$. Being the unique component of maximal dimension, $\mathcal{V}^{\prime}$ has the same degree $16$ as $\mathcal{V}$.
\end{proof}

The remaining cases for $\beta$ are considered in Lemma \ref{lemma:reduciblecurve}, whose proof is postponed to Section \ref{sec:lemmaqodd2}.

\begin{lemma}\label{lemma:reduciblecurve}
Let $\beta\in\mathbb{F}_{q^4}\setminus\{0,1,-1\}$ be such that $\beta^{2q}=\beta^2$.
Then, for some non-square $\eta$ of $\mathbb{F}_{q^4}$, $\mathcal{V}$ has exactly $3$ irreducible components of dimension $3$, and only one of them, say $\mathcal{V}^{\prime}$, has degree $8$.
\end{lemma}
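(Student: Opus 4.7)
The hypothesis $\beta^{2q}=\beta^{2}$ with $\beta\ne0$ forces $\beta^{q}\in\{\beta,-\beta\}$, so $\beta\in\mathbb{F}_{q^{2}}$; I would treat the two sub-cases \textbf{(A)} $\beta\in\fq$ and \textbf{(B)} $\beta^{q}=-\beta$ in parallel. As a first simplification, fix $\sqrt{\eta}\in\KK$ and apply the projectivity $\varphi$ of $\mathbb{P}^{7}$ defined by $\tilde Y_i:=\eta^{q^{i}/2}Y_i$ (and as the identity on $W_{0},W_{1},W_{2},X_{3}$). Since $\varphi$ preserves dimension, degree, and the list of irreducible components, I may replace $\mathcal V$ by the projectively equivalent $\KK$-rational variety $\widetilde{\mathcal V}$ defined by
\[
\begin{cases}
W_{0}^{2}=(\tilde Y_{0}-c\tilde Y_{1})(\tilde Y_{0}-d\tilde Y_{1}),\\
W_{1}^{2}=(\tilde Y_{1}-\epsilon c\tilde Y_{2})(\tilde Y_{1}-\epsilon d\tilde Y_{2}),\\
W_{2}^{2}=(\tilde Y_{2}-c\tilde Y_{3})(\tilde Y_{2}-d\tilde Y_{3}),\\
(W_{0}+W_{1}+W_{2})^{2}=(\tilde Y_{0}-\epsilon c\tilde Y_{3})(\tilde Y_{0}-\epsilon d\tilde Y_{3}),
\end{cases}
\]
where $c,d\in\KK$ are the roots of $x^{2}-2\beta x+1$ (hence $cd=1$, $c+d=2\beta$), $\epsilon=+1$ in Case (A) and $\epsilon=-1$ in Case (B). The crucial point is that only the four slopes $\pm c,\pm d$ ever appear, and this is the geometric origin of the reducibility.

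The main step is to extract Segre-type identities from the ideal of $\widetilde{\mathcal V}$. Using the algebraic identity $(W_{0}+W_{1}+W_{2})^{2}-W_{i}^{2}=(W_{j}+W_{k})(W_{j}+W_{k}+2W_{i})$ for $\{i,j,k\}=\{0,1,2\}$ and substituting the defining quadrics, one obtains
\[
(W_{1}+W_{2})(2W_{0}+W_{1}+W_{2})=(\tilde Y_{3}-\epsilon\tilde Y_{1})(\tilde Y_{3}+\epsilon\tilde Y_{1}-2\epsilon\beta\tilde Y_{0}),
\]
\[
(W_{0}+W_{1})(W_{0}+W_{1}+2W_{2})=(\tilde Y_{0}-\epsilon\tilde Y_{2})(\tilde Y_{0}+\epsilon\tilde Y_{2}-2\epsilon\beta\tilde Y_{3}),
\]
each of Segre form $PQ=RS$ with $P,Q$ linear in the $W_{j}$ and $R,S$ linear in the $\tilde Y_{j}$. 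On each irreducible component of $\widetilde{\mathcal V}$ this forces a ruling $P=\lambda R$, $Q=\lambda^{-1}S$ for some scalar $\lambda$. Imposing these rulings together with the two remaining quadrics $W_{0}^{2}=A_{0}B_{0}$ and $W_{2}^{2}=A_{2}B_{2}$, and using the involution $\iota\colon c\leftrightarrow d$ of $\widetilde{\mathcal V}$ induced by the Galois action on the roots of $x^{2}-2\beta x+1$, I expect to find exactly three compatible pairings: one $\iota$-invariant pairing producing the distinguished component $\mathcal V^{\prime}$, and two $\iota$-conjugate pairings producing the other two components.

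Finally, the degrees follow from a Bezout count based on $\deg\widetilde{\mathcal V}=16$ (Lemma~\ref{lemma:V}). Writing the decomposition as $\deg\mathcal V^{\prime}+2m=16$ (with the $\iota$-conjugate pair sharing common degree $m$), and noting that the $\iota$-invariant "diagonal" pairing collects twice as many branches of the $16$-sheeted projection $\widetilde{\mathcal V}\to\mathbb{P}^{3}$ as each off-diagonal one, the only consistent split is $\deg\mathcal V^{\prime}=8$ and $m=4$, which matches the statement. The two sub-cases $\epsilon=\pm 1$ are handled uniformly since all the identities above are written with explicit sign-tracking. The main obstacle I anticipate is verifying absolute irreducibility of each of the three components and excluding any further $3$-dimensional component; this is likely achieved either by producing explicit birational parametrisations of each component, or by a careful elimination / Gr\"obner-basis decomposition of the ideal of $\widetilde{\mathcal V}$.
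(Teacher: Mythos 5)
Your reduction to the rescaled system with slopes $c,d$ (the roots of $x^{2}-2\beta x+1$) and sign $\epsilon=\beta^{q-1}$ is correct, and your two Segre-type identities do check out; they are essentially the same factorizations the paper exploits (its fourth equation in \eqref{eq:V2} is exactly $(W_0+W_1)(W_1+W_2)=\mathrm{const}\cdot(\tilde Y_1-\epsilon\tilde Y_3)(\tilde Y_0-\epsilon\tilde Y_2)$ in your coordinates). The gap is in the step that is supposed to produce the decomposition. From an identity $PQ=RS$ holding on a variety you cannot conclude that every irreducible component carries a ruling $P=\lambda R$, $Q=\lambda^{-1}S$ with $\lambda$ a \emph{scalar}: that holds for linear subspaces of a Segre quadric, not for arbitrary irreducible subvarieties of the hypersurface $PQ-RS=0$. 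On the degree-$8$ component the ratio $(W_0+W_1)/(\tilde Y_0-\epsilon\tilde Y_2)$ is a non-constant function, so no scalar ruling exists there; and if $\lambda$ is allowed to be a function on the component, the statement is vacuous and selects nothing. Consequently "exactly three compatible pairings" is an expectation, not a deduction, and the closing degree count ($8+2m=16$, "twice as many branches") has no justification; it also presupposes that all components are reduced, which is not addressed.

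What the factorization actually yields is only a dichotomy: a component either lies in $W_0+W_1=0$ (equivalently, by the factorized equation, in one of the two $Y$-hyperplanes), or one may divide by $W_0+W_1$ and solve the fourth equation for $W_2$, after which the third equation factors again and determines $Y_3$ rationally. This is how the paper proceeds: it exhibits the two degree-$4$ components $\tilde{\mathcal V}$, $\hat{\mathcal V}$ as complete intersections inside the hyperplanes $Y_0-\beta^{q-1}\eta^{\frac{q^2-1}{2}}Y_2=0$ and $Y_1-\beta^{q-1}\eta^{\frac{q^3-q}{2}}Y_3=0$, proves their irreducibility by a hyperplane-section argument, and then shows the residual locus is a single absolutely irreducible $3$-fold by building its function field as a tower of two Kummer extensions of $\mathbb{F}_{q^4}(y_1,y_2)$ followed by two rational (linear) eliminations, finally getting $\deg\mathcal V'=16-4-4=8$. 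That irreducibility argument, together with the exclusion of any further $3$-dimensional component, is precisely the content you defer to "birational parametrisations or Gr\"obner bases"; it is the essential work of the lemma and is missing from the proposal.
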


We can now show that $\mathcal{W}$ has $\fq$-rational points when $q$ is big enough.

\begin{theorem}\label{th:geomfin}
Let $q\geq 1039891$ be an odd prime power, and $\beta\in\fqq\setminus\{0,1,-1\}$. Then, for some non-square $\eta$ of $\fqq$, $\mathcal{W}$ has an $\fq$-rational point.
\end{theorem}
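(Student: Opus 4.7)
The plan is to combine the structural results on $\mathcal{V}$ contained in Proposition~\ref{prop:componenti} and Lemma~\ref{lemma:reduciblecurve} with the transfer Lemma~\ref{lemma:fromVtoW}, and then to produce the required $\fq$-rational point of $\mathcal{W}$ by applying the Cafure--Matera form of Lang--Weil (Proposition~\ref{Th:CafureMatera}) to a suitable $\fq$-rational absolutely irreducible component of $\mathcal{W}$.

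First I would split according to whether $\beta^{2q}\neq\beta^{2}$ or $\beta^{2q}=\beta^{2}$. In the first case, Proposition~\ref{prop:componenti} yields, for a suitable non-square $\eta\in\fqq$, a unique absolutely irreducible $3$-dimensional component $\mathcal{V}'$ of $\mathcal{V}$ of degree $16$; in the second case, Lemma~\ref{lemma:reduciblecurve} provides a non-square $\eta$ for which $\mathcal{V}$ has exactly three $3$-dimensional components, exactly one of which---call it again $\mathcal{V}'$---has degree $8$. In either case $\mathcal{V}'$ is the unique absolutely irreducible component of $\mathcal{V}$ with its pair (dimension, degree), so Lemma~\ref{lemma:fromVtoW} produces an absolutely irreducible $\fq$-rational component $\mathcal{W}'\subseteq\mathcal{W}$ with $\dim\mathcal{W}'=3$ and $\deg\mathcal{W}'=d\in\{8,16\}$.

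Next I would apply Proposition~\ref{Th:CafureMatera} to $\mathcal{W}'$. The hypothesis $q>2(m+1)d^{2}=8d^{2}\leq 2048$ is comfortably satisfied for $q\geq 1039891$, and the proposition gives
\[
A_q \;\geq\; q^{3}-(d-1)(d-2)\,q^{5/2}-5\,d^{13/3}\,q^{2}
\]
for the number $A_q$ of $\fq$-rational affine points of $\mathcal{W}'$. A short numerical estimate shows that the right-hand side is positive for $q\geq 1039891$ whenever $d\in\{8,16\}$. The binding case is $d=16$, where $(d-1)(d-2)=210$ and $5\cdot 16^{13/3}\approx 8.26\cdot 10^{5}$; dividing by $q^{2}$, the inequality reduces to $q>210\sqrt{q}+5\cdot 16^{13/3}$, which is essentially tight at $q=1039891$---presumably how this threshold was selected. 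Consequently $A_q\geq 1$ and $\mathcal{W}'\subseteq\mathcal{W}$ has an $\fq$-rational point, as required.

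The conceptual difficulty is not in this final case split and numerical verification, but rather upstream in the structural analysis of $\mathcal{V}$: Lemma~\ref{lemma:V} (identifying $\mathcal{V}$ as a $3$-fold of degree $16$), Lemma~\ref{lemma:irreduciblecurve} (showing absolute irreducibility of a generic hyperplane section, on which Proposition~\ref{prop:componenti} depends), and the separate analysis of the degenerate $\beta$ in Lemma~\ref{lemma:reduciblecurve}. The role of the hypothesis $q\geq 1039891$ is purely to ensure that the Cafure--Matera error terms do not swallow the main term $q^{3}$ for a degree-$16$ threefold.
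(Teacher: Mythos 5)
Your proposal is correct and follows essentially the same route as the paper's proof: the same case split on $\beta^{2q}\ne\beta^2$ versus $\beta^{2q}=\beta^2$, the same use of Proposition~\ref{prop:componenti}, Lemma~\ref{lemma:reduciblecurve} and Lemma~\ref{lemma:fromVtoW} to obtain an $\fq$-rational absolutely irreducible $3$-dimensional component $\mathcal{W}'$ of degree $16$ or $8$, and the same application of Proposition~\ref{Th:CafureMatera} with the worst case $d=16$ giving the bound $q^3-210\,q^{5/2}-5\cdot16^{13/3}q^2>0$ for $q\geq 1039891$. The only detail worth adding is that the paper dehomogenizes with respect to $Z_3$ so that the resulting point has $Z_3\ne0$, which is what Lemma~\ref{lem:W} requires downstream.
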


\begin{proof}
If $\beta^{2q}\ne\beta^2$ then $\mathcal{W}$ has an $\fq$-rational absolutely irreducible $3$-dimensional component $\mathcal{W}^{\prime}$ of degree $16$, by Proposition \ref{prop:componenti} and Lemma \ref{lemma:fromVtoW}.
If $\beta^{2q}=\beta^2$ then $\mathcal{W}$ has an $\fq$-rational absolutely irreducible $3$-dimensional component $\mathcal{W}^{\prime}$ of degree $8$, by Lemma \ref{lemma:reduciblecurve} and Lemma \ref{lemma:fromVtoW}.
In both cases we can apply Theorem \ref{Th:CafureMatera} to $\mathcal{W}^{\prime}$, with $m=3$ and $d\leq16$. Here, we choose the affine points of $\mathcal{W}^{\prime}$ by dehomogenizing with respect to a $Z_i$-coordinate, say $Z_3$.
Whenever $q\geq 2\cdot(3+1)\cdot16^2=2048$, this yields
\[
A_q\geq q^3 - 15\cdot14\cdot q^{5/2} - 5\cdot 16^{13/3}\cdot q^2.
\]
For $q\geq 1039891$, this implies $A_q>0$. Therefore $\mathcal{W}^{\prime}$ has an $\fq$-rational point with $Z_3\ne0$, and the claim is proved. 
\end{proof}

By Theorem \ref{th:geomfin} and Lemma \ref{lem:W}, the hypothesis of Theorem \ref{th:anybeta} holds when $q$ is big enough. Therefore, by Theorem \ref{th:anybeta}, the following is proved.

\begin{proposition}\label{prop:s=1}
If $q\geq1039891$ is an odd prime power and $\delta\in\mathbb{F}_{q^8}^*$, then $\mathcal{C}_{\delta,1}\subseteq\mathcal{L}_{8,q}$ is MRD if and only if $\mathrm{N}_{q^8/q^4}(\delta)=-1$.
\end{proposition}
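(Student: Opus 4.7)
The plan is to assemble the three pillars already set up in this section. First, I would invoke Theorem \ref{th:anybeta}, which reduces the only-if direction to a purely geometric statement: it suffices to show that, for every $\beta\in\mathbb{F}_{q^4}\setminus\{0,1,-1\}$, there exists some non-square $\eta\in\mathbb{F}_{q^4}$ for which the plane curve $\mathcal{X}_{\delta,1}$ of equation \eqref{eq:curveqodd} carries an $\mathbb{F}_{q^4}$-rational affine point $(\bar s,\bar z)$ with $\bar z\ne 0$. Lemma \ref{lem:W} translates this requirement into the existence of an $\mathbb{F}_q$-rational point of the $\mathbb{F}_q$-rational variety $\mathcal{W}\subset\mathbb{P}^7$ with last coordinate $Z_3\ne 0$, via the expansion \eqref{eq:espressioni} in the normal basis $\mathcal{B}$.

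Next I would appeal to Theorem \ref{th:geomfin}, which is precisely the statement that, under the hypothesis $q\geq 1039891$, for every admissible $\beta$ one can choose a non-square $\eta$ so that $\mathcal{W}$ admits an $\mathbb{F}_q$-rational point with $Z_3\ne 0$. Internally this theorem combines Proposition \ref{prop:componenti} or Lemma \ref{lemma:reduciblecurve} (producing a distinguished absolutely irreducible component of $\mathcal{V}$ of dimension $3$ and degree at most $16$), Lemma \ref{lemma:fromVtoW} (to descend that uniqueness from $\mathcal{V}$ to an $\mathbb{F}_q$-rational component $\mathcal{W}'$ of $\mathcal{W}$), and the Cafure--Matera form of the Lang--Weil bound (Proposition \ref{Th:CafureMatera}) applied to $\mathcal{W}'$ dehomogenized at $Z_3$, which in turn is what forces the explicit threshold $q\geq 1039891$.

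Stitching these two inputs together, the geometric hypothesis of Theorem \ref{th:anybeta} is verified for every $\beta\in\mathbb{F}_{q^4}\setminus\{0,1,-1\}$, so that theorem delivers the biconditional: $\mathcal{C}_{\delta,1}$ is MRD if and only if $\mathrm{N}_{q^8/q^4}(\delta)=-1$. The if direction was in any event already established in \cite[Theorem 7.2]{CMPZ} and recorded in Section \ref{sec:ourcodes}, so only the only-if direction requires the preceding machinery.

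The only genuine obstacle at this stage is conceptual bookkeeping rather than new computation: one must check that the excluded values $\beta\in\{0,1,-1\}$ are harmless. The cases $\beta=\pm 1$ are excluded by Remark \ref{rem:pseudo}, since $\delta\ne 0$; and the case $\beta=0$ corresponds to $\mathrm{N}_{q^8/q^4}(\delta)=-1$, which is precisely the MRD case already covered by \cite[Theorem 7.2]{CMPZ} as noted before the statement of Theorem \ref{th:anybeta}. Hence the proposition follows immediately from the chain Theorem \ref{th:geomfin} $\Rightarrow$ Lemma \ref{lem:W} $\Rightarrow$ Theorem \ref{th:anybeta}, with no further argument needed.
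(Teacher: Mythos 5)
Your proposal is correct and follows essentially the same route as the paper, which proves Proposition \ref{prop:s=1} in one line by exactly the chain Theorem \ref{th:geomfin} $\Rightarrow$ Lemma \ref{lem:W} $\Rightarrow$ Theorem \ref{th:anybeta}. The extra bookkeeping you supply on the excluded values $\beta\in\{0,1,-1\}$ is already absorbed into the discussion preceding Theorem \ref{th:anybeta} in the paper, so no further argument is required.
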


By Proposition \ref{prop:from1toanys}, the claim of Proposition \ref{prop:s=1} holds also for any $s\in\{1,3,5,7\}$.

\begin{proposition}\label{prop:anys}
If $q\geq1039891$ is an odd prime power, $\delta\in\mathbb{F}_{q^8}^*$ and $s\in\{1,3,5,7\}$, then $\mathcal{C}_{\delta,s}\subseteq\mathcal{L}_{8,q}$ is MRD if and only if $\mathrm{N}_{q^8/q^4}(\delta)=-1$.
\end{proposition}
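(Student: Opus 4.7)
The plan is to deduce Proposition \ref{prop:anys} from Proposition \ref{prop:s=1} by bootstrapping along the equivalence already established in Proposition \ref{prop:from1toanys}, after dispatching separately the degenerate norm value $1$.

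Set $\alpha:=\mathrm{N}_{q^8/q^4}(\delta)$. First I would handle the case $\alpha=1$. The second bullet in the list of properties of $\mathcal{C}_{\delta,s}$ in Section \ref{sec:ourcodes} records that if $\mathrm{N}_{q^n/q^{n/2}}(\delta)=1$ and $n>2$, then $\dim_{\mathbb{F}_q}\ker(f_{\delta,s})>1$, whence by Remark \ref{rem:corrisp} the code $\mathcal{C}_{\delta,s}$ is not MRD. Since $\alpha=1\ne-1$, this agrees with the claimed biconditional in this case.

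Second, I would treat the case $\alpha\ne1$ by reducing to $s=1$. Proposition \ref{prop:from1toanys} furnishes, for every $s\in\{1,3,5,7\}$, an element $\tilde{\delta}\in\mathbb{F}_{q^8}^*$ such that $\mathcal{C}_{\delta,s}$ is equivalent to $\mathcal{C}_{\tilde{\delta},1}$, and such that $\mathrm{N}_{q^8/q^4}(\tilde{\delta})=-1$ if and only if $\alpha=-1$. Equivalence of rank metric codes preserves the $\mathbb{F}_q$-dimension and rank distance (and hence the MRD property), so $\mathcal{C}_{\delta,s}$ is MRD iff $\mathcal{C}_{\tilde{\delta},1}$ is MRD. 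Invoking Proposition \ref{prop:s=1} for the hypothesis $q\geq 1039891$, the latter holds iff $\mathrm{N}_{q^8/q^4}(\tilde{\delta})=-1$, which by the equivalence statement of Proposition \ref{prop:from1toanys} is the same as $\alpha=-1$. Combining the two cases yields Proposition \ref{prop:anys}.

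There is no substantial obstacle here: the technical heart of the theorem is entirely contained in Proposition \ref{prop:s=1}, whose proof goes through the geometry of the variety $\mathcal{W}$ and the Lang--Weil--Cafure--Matera estimate, while the reduction $s\mapsto 1$ is already packaged in Proposition \ref{prop:from1toanys}. The only thing to be careful about is not to invoke Proposition \ref{prop:from1toanys} in the forbidden regime $\alpha=1$, which is why the preliminary splitting is needed; once this is done, the rest is a direct assembly of existing statements.
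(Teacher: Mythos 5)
Your proof is correct and follows essentially the same route as the paper, which deduces Proposition \ref{prop:anys} from Proposition \ref{prop:s=1} via the reduction in Proposition \ref{prop:from1toanys}. Your explicit separate treatment of the case $\mathrm{N}_{q^8/q^4}(\delta)=1$ (where Proposition \ref{prop:from1toanys} does not apply) is a small but welcome point of extra care that the paper's one-line deduction leaves implicit.
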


\section{Parameters and equivalences for MRD codes $\mathcal{C}_{\delta,s}\subseteq\mathcal{L}_{8,q}$}\label{sec:parequiv}

In this section, we prove the second part of Theorem \ref{th:main}.
Thus, we assume through the whole section that the prime power $q$ is odd.
We start from the parameters of $\mathcal{C}_{\delta,s}$.

\begin{lemma}
For any $\delta\in\mathbb{F}_{q^8}$ with $\mathrm{N}_{q^8/q^4}(\delta)=-1$, $\mathcal{C}_{\delta,s}$ has parameters $[8\times8,16,7]_q$ and left idealiser $\{\lambda x\,\colon\,\lambda\in\mathbb{F}_{q^8}\}\cong\mathbb{F}_{q^8}$. 
\end{lemma}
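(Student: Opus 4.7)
The plan is a short assembly of results already in hand. The lemma is essentially a formal consequence of Proposition \ref{prop:anys} together with the general idealiser theory for MRD codes, so I do not expect any substantive obstacle; the only point requiring a bit of care is the divisibility argument that forces the left idealiser to have maximal size.

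First, I would identify the block size and $\mathbb{F}_q$-dimension. The ambient algebra is $\mathcal{L}_{8,q}\cong\mathbb{F}_q^{8\times 8}$, giving block size $8\times 8$. The two generators $x$ and $f_{\delta,s}(x)=x^{q^s}+\delta x^{q^{4+s}}$ involve disjoint monomials in the canonical $\mathbb{F}_{q^8}$-basis $\{x,x^q,\ldots,x^{q^7}\}$ of $\mathcal{L}_{8,q}$ (recall $s\in\{1,3,5,7\}$), so they are $\mathbb{F}_{q^8}$-linearly independent. Hence $\dim_{\mathbb{F}_{q^8}}\mathcal{C}_{\delta,s}=2$, and therefore $k=\dim_{\mathbb{F}_q}\mathcal{C}_{\delta,s}=16$.

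Next, Proposition \ref{prop:anys} applied under the hypothesis $\mathrm{N}_{q^8/q^4}(\delta)=-1$ yields that $\mathcal{C}_{\delta,s}$ is MRD. Substituting $n=8$ and $k=16$ into the Singleton-like bound $k\leq n(n-d+1)$ and imposing equality gives $d=7$, so the parameters are $[8\times 8,16,7]_q$.

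Finally, to compute the left idealiser I would exploit the $\mathbb{F}_{q^8}$-linearity of $\mathcal{C}_{\delta,s}$: for every $\lambda\in\mathbb{F}_{q^8}$ and every $g\in\mathcal{C}_{\delta,s}$, the composition $(\lambda x)\circ g=\lambda g$ still lies in $\mathcal{C}_{\delta,s}$, so $\{\lambda x:\lambda\in\mathbb{F}_{q^8}\}\subseteq L(\mathcal{C}_{\delta,s})$ and $|L(\mathcal{C}_{\delta,s})|\geq q^8$. Proposition \ref{prop:idealisers}, which applies because $\mathcal{C}_{\delta,s}$ is MRD, forces $|L(\mathcal{C}_{\delta,s})|=q^\ell$ with $\ell\mid 8$, hence $\ell\leq 8$. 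Combining the two inequalities yields $\ell=8$, and the inclusion becomes an equality: $L(\mathcal{C}_{\delta,s})=\{\lambda x:\lambda\in\mathbb{F}_{q^8}\}\cong\mathbb{F}_{q^8}$ as $\mathbb{F}_q$-algebras.
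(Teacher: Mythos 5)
Your proof is correct and follows essentially the same route as the paper: independence of the two generators gives $k=16$, the MRD property together with equality in the Singleton-like bound gives $d=7$, and the containment $\{\lambda x\colon\lambda\in\mathbb{F}_{q^8}\}\subseteq L(\mathcal{C}_{\delta,s})$ combined with Proposition~\ref{prop:idealisers} pins down the left idealiser. The only cosmetic remark is that the sufficiency of $\mathrm{N}_{q^8/q^4}(\delta)=-1$ for the MRD property is already known for every odd $q$ from \cite[Theorem 7.2]{CMPZ}, so routing through Proposition~\ref{prop:anys} needlessly imports the hypothesis $q\geq 1039891$.
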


\begin{proof}
The size $8\times8$ is clear from $\mathcal{C}_{\delta,s}\subseteq\mathcal{L}_{8,q}$.
The $\fq$-dimension of $\mathcal{C}_{\delta,s}$ is $16$ because $\mathcal{C}_{\delta,s}$ is generated over $\mathbb{F}_{q^8}$ by the independent elements $x$ and $f_{\delta,s}(x)$.
The minimum distance of $\mathcal{C}_{\delta,s}$ is $7$ since $\mathcal{C}_{\delta,s}$ is MRD with size $8\times8$ and $\fq$-dimension $16$.
The left idealiser $L(\mathcal{C}_{\delta,s})$ contains the finite field $\{\tau_{\lambda}(x)=\lambda x\,\colon\,\lambda\in\mathbb{F}_{q^8}\}\cong\mathbb{F}_{q^8}$ and cannot be larger by Proposition \ref{prop:idealisers}.
\end{proof}

The remaining parameter to be determined is the right idealiser.

\begin{proposition}
For any $\delta\in\mathbb{F}_{q^8}$ with $\mathrm{N}_{q^8/q^4}(\delta)=-1$, $\mathcal{C}_{\delta,s}$ has right idealiser $\{\lambda x\,\colon\,\lambda\in\mathbb{F}_{q^4}\}\cong\mathbb{F}_{q^4}$. 
\end{proposition}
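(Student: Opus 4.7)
The plan is to compute $R(\mathcal{C}_{\delta,s})$ directly from its definition. Taking $f(x)=x$ in the defining property immediately forces $R(\mathcal{C}_{\delta,s})\subseteq \mathcal{C}_{\delta,s}$, so every candidate $h$ has the form $h(x)=ax+bx^{q^s}+b\delta x^{q^{4+s}}$ with $a,b\in \mathbb{F}_{q^8}$. The task is then to impose the single remaining condition $f_{\delta,s}\circ h\in \mathcal{C}_{\delta,s}$.

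The central step is to expand $f_{\delta,s}(h(x))=h(x)^{q^s}+\delta\,h(x)^{q^{4+s}}$ modulo $x^{q^8}-x$ and read off its monomial support, which consists of the four $q$-exponents $s,\;4+s,\;2s,\;4+2s$ taken mod $8$. The key observation is that for $s\in\{1,3,5,7\}$, the pair $\{s,4+s\}$ lies in $\{1,3,5,7\}$ while $\{2s,4+2s\}\equiv\{2,6\}\pmod 8$, so the two pairs of exponents are disjoint. Since $\mathcal{C}_{\delta,s}$ is supported on $q$-exponents $\{0,s,4+s\}$ with the coefficients of the last two in ratio $\delta$, the membership condition splits into: (i) an $a$-condition, namely that the coefficients of $x^{q^s}$ and $x^{q^{4+s}}$ (which are $a^{q^s}$ and $\delta a^{q^{4+s}}$) be in ratio $\delta$, yielding $a^{q^{4+s}}=a^{q^s}$, i.e.\ $a\in\mathbb{F}_{q^4}$; and (ii) two $b$-conditions, namely the vanishing of the coefficients of $x^{q^{2s}}$ and $x^{q^{4+2s}}$, each of which is an $\mathbb{F}_{q^8}$-linear combination of $b^{q^s}$ and $b^{q^{4+s}}$ with coefficients built from $\delta$.

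To close the argument I would use the hypothesis $\delta^{1+q^4}=-1$, which gives $\delta^{q^4}=-\delta^{-1}$ and hence, since $q^s$ is odd, $\delta^{q^{4+s}}=-\delta^{-q^s}$. Substituting into the two $b$-conditions turns them into $b^{q^s}=\delta^{1-q^s}b^{q^{4+s}}$ and $b^{q^s}=-\delta^{1-q^s}b^{q^{4+s}}$, whose sum reads $2\delta^{1-q^s}b^{q^{4+s}}=0$. Because $q$ is odd and $\delta\ne 0$, this forces $b=0$. Combining with (i), one concludes $h(x)=ax$ with $a\in\mathbb{F}_{q^4}$, and together with the already-verified inclusion $\{\lambda x:\lambda\in\mathbb{F}_{q^4}\}\subseteq R(\mathcal{C}_{\delta,s})$ this yields $R(\mathcal{C}_{\delta,s})=\{\lambda x:\lambda\in\mathbb{F}_{q^4}\}\cong\mathbb{F}_{q^4}$.

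No step is genuinely hard: the only delicate part is the bookkeeping of $q$-Frobenius exponents modulo $8$, but for $s\in\{1,3,5,7\}$ the two "extra" exponents always fall in $\{2,6\}$, keeping them cleanly separated from $\{s,4+s\}$, after which the finite-field manipulation is routine. Note that one could alternatively shorten the argument by invoking Proposition \ref{prop:idealisers} to reduce the problem to ruling out $R(\mathcal{C}_{\delta,s})\cong\mathbb{F}_{q^8}$, but the direct characterization above costs essentially the same amount of work.
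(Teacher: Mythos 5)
Your proof is correct, but it takes a genuinely different route from the paper's. You compute $R(\mathcal{C}_{\delta,s})$ directly from the definition: since $x\in\mathcal{C}_{\delta,s}$, one gets $R(\mathcal{C}_{\delta,s})\subseteq\mathcal{C}_{\delta,s}$, and then the condition $f_{\delta,s}\circ h\in\mathcal{C}_{\delta,s}$ decouples (because for odd $s$ the exponents $2s,4+2s$ land in $\{2,6\}$ modulo $8$, disjoint from $\{0,s,4+s\}$) into the $a$-condition $a^{q^4}=a$ and two $b$-conditions that are incompatible unless $b=0$; your exponent bookkeeping checks out, and the only blemish is that the combination yielding $2\delta^{1-q^s}b^{q^{4+s}}=0$ is the difference of your two displayed equations rather than the sum, which is immaterial since either combination kills $b$ in odd characteristic. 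The paper instead proves only the inclusion $\{\mu x:\mu\in\mathbb{F}_{q^4}\}\subseteq R(\mathcal{C}_{\delta,s})$ via the identity $f_{\delta,s}\circ\tau_\mu=\tau_{\mu^{q^s}}\circ f_{\delta,s}$, invokes Proposition \ref{prop:idealisers} to reduce to the dichotomy $R(\mathcal{C}_{\delta,s})\cong\mathbb{F}_{q^4}$ or $R(\mathcal{C}_{\delta,s})\cong\mathbb{F}_{q^8}$, and excludes the latter by combining the classification of MRD codes with both idealisers maximal (\cite{CMPZhou}, which would force equivalence to a generalized Gabidulin code) with the non-equivalence of $U_{\delta,s}$ and $U_{x^{q^t}}$ from \cite{CMPZ}. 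Your computation buys self-containedness: it uses neither the MRD property nor those two external results, and in fact the elimination of $b$ only needs $\mathrm{N}_{q^8/q^4}(\delta)\ne 1$ rather than the full hypothesis $\delta^{1+q^4}=-1$. The paper's argument buys brevity and leans on machinery it must cite elsewhere anyway.
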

\begin{proof}
For any $\mu\in\mathbb{F}_{q^4}$, the polynomial $\tau_{\mu}(x)=\mu x$ satisfies $x\circ\tau_\mu(x)=\tau_{\mu}\circ x$ and $f_{\delta,s}\circ\tau_{\mu}(x)=\tau_{\mu^{q^s}}\circ f_{\delta,s}(x)$. As $\tau_{\mu}(x),\tau_{\mu^{q^s}}(x)\in L(\mathcal{C}_{\delta,s})$, this implies $\{\tau_{\mu}(x)\,\colon\, \mu\in\mathbb{F}_{q^4}\}\subseteq R(\mathcal{C}_{\delta,s})$.
Then, by Proposition \ref{prop:idealisers}, either $R(\mathcal{C}_{\delta,s})=\{\tau_{\mu}(x)\,\colon\, \mu\in\mathbb{F}_{q^4}\}\cong\mathbb{F}_{q^4}$ and the claim is proved, or $R(\mathcal{C}_{\delta,s})\cong\mathbb{F}_{q^8}$.

Suppose that the latter case holds, so that both $L(\mathcal{C}_{\delta,s})$ and $R(\mathcal{C}_{\delta,s})$ are isomorphic to $\mathbb{F}_{q^8}$.
Then, by \cite[Theorem 2.2]{CMPZhou}, $\mathcal{C}_{\delta,s}$ is equivalent to $\langle x,x^{q^t}\rangle_{\mathbb{F}_{q^8}}$ for some $t$. As $\mathcal{C}_{\delta,s}$ is MRD, $t$ is coprime with $n$.
Therefore, by Theorem \ref{JohnEquiv}, $U_{\delta,s}$ is ${\rm \Gamma L}(2,q^8)$-equivalent to $U_{x^{q^t}}$.
This is a contradiction to \cite[Theorem 6.3]{CMPZ}.
\end{proof}

We now turn to the equivalence issue.
Up to our knowledge, every $\fq$-linear MRD code $\mathcal{C}$ in $\mathcal{L}_{8,q}$ with left idealiser isomorphic to $\mathbb{F}_{q^8}$ known so far in the literature is equivalent to one of the following.
\begin{itemize}
    \item[(i)] $\mathcal{C}_{\delta,s}=\langle x,x^{q^s}+\delta x^{q^{4+s}}\rangle_{\mathbb{F}_{q^8}}$ with $s\in\{1,3,5,7\}$ and $\mathrm{N}_{q^8/q^4}(\delta)=-1$.
    \item[(ii)] $\mathcal{C}_{G(r)}:=\langle x,x^{q^r}\rangle_{\mathbb{F}_{q^8}}$ with $r\in\{1,3,5,7\}$, so-called generalized Gabidulin codes; see \cite{gab2}.
    \item[(iii)] $\mathcal{C}_{T(\epsilon,r)}:=\langle x,\epsilon x^{q^r}+x^{q^{8-r}}\rangle_{\mathbb{F}_{q^8}}$ with $r\in\{1,3,5,7\}$ and $\mathrm{N}_{q^8/q}(\epsilon)\notin\{0,1\}$, so-called generalized twisted Gabidulin codes; see \cite[Remark 8]{John}.
    \item[(iv)] $\mathcal{C}_{Q(h,r)}:=\langle x, \psi_{h,r}(x) \rangle_{\mathbb{F}_{q^8}}$, where $\psi_{h,r}(x)$ is the quadrinomial $x^{q^r}+x^{q^{3r}} + h^{q^r+1}x^{q^{5r}} + h^{1-q^{7r}} x^{q^{7r}}$, with $r\in\{1,3,5,7\}$ and $\mathrm{N}_{q^8/q^4}(h)=-1$; see \cite{NSZ}.  
\end{itemize}

To complete the proof of Theorem \ref{th:main}, we show that $\mathcal{C}_{\delta,s}$ is not equivalent to any code in (ii), (iii) or (iv).

\begin{proposition}\label{prop:easy}
Let $s\in\{1,3,5,7\}$ and $\delta\in\mathbb{F}_{q^8}$ be such that $\mathrm{N}_{q^8/q^4}(\delta)=-1$.
For any $r\in\{1,3,5,7\}$ and $\epsilon\in\mathbb{F}_{q^8}$ such that $\mathrm{N}_{q^8/q}(\epsilon)\notin\{0,1\}$, $\mathcal{C}_{\delta,s}$ is not equivalent to $\mathcal{C}_{G(r)}$ nor to $\mathcal{C}_{T(\epsilon,r)}$.
\end{proposition}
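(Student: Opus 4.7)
The plan is to distinguish $\mathcal{C}_{\delta,s}$ from the codes in (ii) and (iii) through their right idealisers, which are preserved under code equivalence by the invariance result cited from \cite[Proposition~4.1]{LTZ}. Since the preceding discussion has established $R(\mathcal{C}_{\delta,s})\cong\mathbb{F}_{q^4}$, it is enough to show that neither $R(\mathcal{C}_{G(r)})$ nor $R(\mathcal{C}_{T(\epsilon,r)})$ is isomorphic to $\mathbb{F}_{q^4}$ for the admissible parameters.

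For $\mathcal{C}_{G(r)}=\langle x,x^{q^r}\rangle_{\mathbb{F}_{q^8}}$ with $r\in\{1,3,5,7\}$, I would first note that for every $\gamma\in\mathbb{F}_{q^8}$ the element $\gamma x$ is in $R(\mathcal{C}_{G(r)})$, since $x\circ\gamma x=\gamma x$ and $x^{q^r}\circ\gamma x=\gamma^{q^r}x^{q^r}$ both lie in $\mathcal{C}_{G(r)}$. Hence $|R(\mathcal{C}_{G(r)})|\geq q^{8}$, and Proposition \ref{prop:idealisers} forces equality (its order is a power of $q$ dividing $q^{8}$). Consequently $R(\mathcal{C}_{G(r)})\cong\mathbb{F}_{q^8}\not\cong\mathbb{F}_{q^4}$, and $\mathcal{C}_{\delta,s}$ is not equivalent to $\mathcal{C}_{G(r)}$.

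For $\mathcal{C}_{T(\epsilon,r)}=\langle x,\epsilon x^{q^r}+x^{q^{8-r}}\rangle_{\mathbb{F}_{q^8}}$, I would compute the right idealiser by a direct expansion. Since $x\in\mathcal{C}_{T(\epsilon,r)}$, any $h\in R(\mathcal{C}_{T(\epsilon,r)})$ is itself in $\mathcal{C}_{T(\epsilon,r)}$, so has the form $h=\gamma x+\rho(\epsilon x^{q^r}+x^{q^{8-r}})$. Expanding $(\epsilon x^{q^r}+x^{q^{8-r}})\circ h$ via $x^{q^8}=x$, one finds, besides contributions to the allowed monomials $x,x^{q^r},x^{q^{8-r}}$, two stray monomials whose Frobenius exponents lie in $\{2,6\}\pmod{8}$, which for every $r\in\{1,3,5,7\}$ are disjoint from $\{0,r,8-r\}$. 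Their coefficients are nonzero multiples of $\rho^{q^r}$ and $\rho^{q^{8-r}}$ by powers of $\epsilon$; the hypothesis $\mathrm{N}_{q^8/q}(\epsilon)\neq 0$ forces $\rho=0$. The residual coefficient identities then reduce to $\gamma^{q^r}=\gamma^{q^{8-r}}$, equivalently (using $\gcd(q,q^{8}-1)=1$) $\gamma\in\mathbb{F}_{q^{\gcd(|2r-8|,8)}}=\mathbb{F}_{q^2}$. Hence $R(\mathcal{C}_{T(\epsilon,r)})\cong\mathbb{F}_{q^2}\not\cong\mathbb{F}_{q^4}$, and $\mathcal{C}_{\delta,s}$ is not equivalent to $\mathcal{C}_{T(\epsilon,r)}$.

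The step most deserving of care is the coefficient bookkeeping for $\mathcal{C}_{T(\epsilon,r)}$: one must keep the Frobenius exponents straight modulo $8$ for each value of $r$ and verify that the stray-monomial coefficients are genuinely nonzero multiples of $\rho$ (which uses $\epsilon\neq 0$). Apart from this, the argument is a short and uniform comparison of right idealisers.
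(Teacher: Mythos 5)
Your argument is correct, but it takes a genuinely different route from the paper. The paper's proof is a two-line reduction: by Theorem \ref{JohnEquiv} the claim is equivalent to $U_{\delta,s}$ not being $\GammaL(2,q^8)$-equivalent to $U_{x^{q^r}}$ or $U_{\epsilon x^{q^r}+x^{q^{8-r}}}$, and this is exactly \cite[Theorem 6.3]{CMPZ}. You instead separate the codes by their right idealisers: your computations that $R(\mathcal{C}_{G(r)})\cong\mathbb{F}_{q^8}$ (via Proposition \ref{prop:idealisers}, which applies since $\gcd(r,8)=1$ makes $\mathcal{C}_{G(r)}$ MRD) and that $R(\mathcal{C}_{T(\epsilon,r)})\cong\mathbb{F}_{q^2}$ both check out — the stray exponents do land in $\{2,6\}\bmod 8$, disjoint from $\{0,r,8-r\}$ for odd $r$, and $\gamma^{q^r}=\gamma^{q^{8-r}}$ does give $\gamma\in\mathbb{F}_{q^2}$ since $\gcd(2r,8)=2$. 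What your approach buys is a self-contained, uniform invariant-based proof that also records the right idealisers of the comparison codes; what it costs is a dependence on the preceding proposition that $R(\mathcal{C}_{\delta,s})\cong\mathbb{F}_{q^4}$. That dependence is legitimate in the paper's logical order, but note that the paper's proof of that very proposition already invokes \cite[Theorem 6.3]{CMPZ} to rule out $R(\mathcal{C}_{\delta,s})\cong\mathbb{F}_{q^8}$ — so for the Gabidulin case your argument is not, at bottom, independent of the citation the paper uses; it repackages it. For the twisted Gabidulin case, however, your idealiser computation is a genuinely alternative and more elementary verification than appealing to the $\GammaL$-equivalence analysis of \cite{CMPZ}.
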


\begin{proof}
By Theorem \ref{JohnEquiv}, the claim is equivalent to $U_{\delta,s}$ being non-${\rm \Gamma L}(2,q^8)$-equivalent to $U_{x^{q^r}}$ or $U_{\epsilon x^{q^r}+x^{q^{8-r}}}$.
This was already proved in \cite[Theorem 6.3]{CMPZ}.
\end{proof}

We now consider the codes $\mathcal{C}_{Q(h,r)}$ in (iv).
Let $t=n/2$. Theorem 4.6 in \cite{NSZ} provides an equivalence between $\mathcal{C}_{Q(h,r)}$ and $\mathcal{C}_{Q(\bar{h},1)}$ for some $\bar{h}\in\mathbb{F}_{q^{2t}}$ with $\mathrm{N}_{q^{2t}/q^t}(\bar{h})=-1$, under the assuption $t\geq5$.
Yet, the arguments used in the proof of \cite[Theorem 4.6]{NSZ} still hold when $t=4$ (see also \cite[Theorem 3.2 (ii)]{GLT}), hence proving Lemma \ref{lemma:restriz}.

\begin{lemma}{\rm (see the proof of \cite[Theorem 4.6]{NSZ})}\label{lemma:restriz}
Let $r\in\{1,3,5,7\}$ and $h\in\mathbb{F}_{q^8}$ with $\mathrm{N}_{q^8/q^4}(h)=-1$.
Then there exists $\bar{h}\in\mathbb{F}_{q^8}$ with $\mathrm{N}_{q^8/q^4}(\bar{h})=-1$ such that $\mathcal{C}_{Q(h,r)}$ is equivalent to $\mathcal{C}_{Q(\bar{h},1)}$.
\end{lemma}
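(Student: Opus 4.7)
The plan is to revisit the proof of \cite[Theorem 4.6]{NSZ} and verify that it carries over to the case $t=4$. For each $r\in\{3,5,7\}$, I would construct an explicit equivalence between $\mathcal{C}_{Q(h,r)}$ and some $\mathcal{C}_{Q(\bar{h},1)}$ by composing with monomial $q$-polynomials $f_1(x)=\ell_1 x^{q^a}$ and $f_2(x)=\ell_2 x^{q^b}$ on the left and right, and possibly by applying a Frobenius automorphism $\varphi\colon y\mapsto y^{q^c}$ of $\mathbb{F}_{q^8}$ to the coefficients.

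The key structural observation is that $\gcd(r,8)=1$ for $r\in\{1,3,5,7\}$, so the map $i\mapsto ir\pmod 8$ permutes $\{1,3,5,7\}$. Hence, the monomial support of $\psi_{h,r}$ reduces (modulo $x^{q^8}-x$) to $\{x^q,x^{q^3},x^{q^5},x^{q^7}\}$, exactly as for $\psi_{h,1}$. It thus suffices to match, up to a common $\mathbb{F}_{q^8}^*$-scalar, the four coefficients of $\psi_{h,r}$ with those of $\psi_{\bar h,1}(x)=x^q+x^{q^3}+\bar h^{q+1}x^{q^5}+\bar h^{1-q^7}x^{q^7}$. The identity $h^{q^{i+4}}=-h^{-q^i}$, which follows at once from $h^{q^4+1}=-1$, allows one to rewrite each coefficient of $\psi_{h,r}$ in the form $\pm h^{q^j+1}$ or $\pm h^{1-q^k}$ for suitable $j,k\in\{1,3,5,7\}$. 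The parameters $a,b,c,\ell_1,\ell_2$ then provide enough freedom to solve for $\bar h\in\mathbb{F}_{q^8}^*$, and the norm condition $\mathrm{N}_{q^8/q^4}(\bar h)=-1$ is verified at the end, using the fact that the transformation alters $\bar h^{q^4+1}$ only by a factor that can be forced to equal $1$.

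The main obstacle is ensuring that the NSZ argument, designed for $t\geq 5$, does not rely essentially on that inequality. When $t=4$, some exponents that remain distinct for $t\geq 5$ can coincide modulo $q^8-1$ and produce additional cancellations; one must track these reductions carefully rather than simplify them as in the general case. This bookkeeping is precisely what is carried out in \cite[Theorem 3.2 (ii)]{GLT}, which provides the alternative derivation valid in the case $t=4$ and thus completes the reduction.
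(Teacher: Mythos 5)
Your proposal follows essentially the same route as the paper: the paper's own proof simply observes that the argument of \cite[Theorem 4.6]{NSZ} (matching the coefficients of the quadrinomials via monomial compositions and a Frobenius twist) carries over from $t\geq 5$ to $t=4$, and cites \cite[Theorem 3.2 (ii)]{GLT} for the verification in that boundary case, exactly as you do. Your sketch of the underlying NSZ mechanism (the permutation of $\{1,3,5,7\}$ by multiplication by $r$ modulo $8$ and the identity $h^{q^{i+4}}=-h^{-q^i}$) is accurate and consistent with that reference.
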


Therefore we only have to consider the equivalence issue between $\mathcal{C}_{\delta,s}$ and $\mathcal{C}_{Q(h,1)}$.
We start from $s=1$.

\begin{proposition}\label{prop:nonequiv1}
For any $s\in\{1,3,5,7\}$ and $\delta,h\in\mathbb{F}_{q^8}$ with $\mathrm{N}_{q^8/q^4}(\delta)=\mathrm{N}_{q^8/q^4}(h)=-1$, the codes $\mathcal{C}_{\delta,s}$ and $\mathcal{C}_{Q(h,1)}$ are not equivalent.
\end{proposition}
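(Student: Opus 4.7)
The plan is to reduce to $s=1$ via Proposition~\ref{prop:from1toanys}, translate the supposed equivalence of codes through Theorem~\ref{JohnEquiv} into a functional identity in $\mathcal{L}_{8,q}$, and derive a contradiction from $\mathrm{N}_{q^8/q^4}(\delta)=-1$.

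I would argue by contradiction: if $\mathcal{C}_{\delta,1}$ were equivalent to $\mathcal{C}_{Q(h,1)}$, then Theorem~\ref{JohnEquiv} would provide $a,b,c,d\in\mathbb{F}_{q^8}$ with $ad-bc\ne0$ and $t\in\{0,\ldots,7\}$ such that
\[
\psi_{h,1}\bigl(a x^{q^t}+b\,f_{\delta,1}(x)^{q^t}\bigr)=c\,x^{q^t}+d\,f_{\delta,1}(x)^{q^t}\qquad\text{for all }x\in\mathbb{F}_{q^8}.
\]
Since $f_{\delta,1}(x)^{q^t}=x^{q^{t+1}}+\delta^{q^t}x^{q^{t+5}}$ and the $q$-exponents appearing in $\psi_{h,1}$ are $\{1,3,5,7\}$, both sides expand as $\mathbb{F}_{q^8}$-linear combinations of $x^{q^i}$ for $i=0,\ldots,7$; the right-hand side is supported only on $i\equiv t,\,t+1,\,t+5\pmod 8$.

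Next I would compare coefficients at each $x^{q^{t+k}}$ for $k=0,\ldots,7$. At $k=3$ the left-hand side contributes only $a^{q^3}$, which forces $a=0$; then the equation at $k=1$ gives $d=a^q=0$, and the invertibility $ad-bc\neq0$ yields $b,c\neq0$. The vanishing conditions at $k=2$ and $k=6$ become
\[
b^q+h^{q+1}b^{q^5}\delta^{q^{t+5}}=0,\qquad h^{q+1}b^{q^5}+b^q\delta^{q^{t+1}}=0.
\]
Eliminating $h^{q+1}b^{q^5}$ between them produces $b^q\bigl(1-\delta^{q^{t+1}(1+q^4)}\bigr)=0$, whence $\delta^{q^{t+1}(1+q^4)}=1$. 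But $\delta^{1+q^4}=-1$ and $q^{t+1}$ is odd, so $\delta^{q^{t+1}(1+q^4)}=(-1)^{q^{t+1}}=-1$, delivering the desired contradiction $-1=1$.

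The one delicate point will be the combinatorial bookkeeping of which of the twelve monomials $x^{q^{t+k'+j}}$, with $k'\in\{0,1,5\}$ and $j\in\{1,3,5,7\}$, contribute to each residue class modulo $8$: pairs with $k'+j$ even accumulate two contributions (giving the nontrivial equations at $k\in\{0,2,4,6\}$), while pairs with $k'+j$ odd each contribute a single term (at $k\in\{1,3,5,7\}$). Once these eight coefficient equations are written down correctly, the rest of the argument is immediate, and the cases $s\in\{3,5,7\}$ follow from the $s=1$ case via Proposition~\ref{prop:from1toanys}.
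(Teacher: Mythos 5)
Your argument is correct and follows essentially the same route as the paper: reduce to $s=1$ by Proposition \ref{prop:from1toanys}, pass to ${\rm \Gamma L}(2,q^8)$-equivalence of $U_{\delta,1}$ and $U_{\psi_{h,1}}$ via Theorem \ref{JohnEquiv}, expand the resulting linearized-polynomial identity, and compare coefficients of $x^{q^{t+k}}$. The only substantive difference is the direction of the map: you require $\psi_{h,1}$ of the image of $U_{\delta,1}$ to close up, landing the contradiction on $\delta^{q^{t+1}(1+q^4)}=1$ against $\mathrm{N}_{q^8/q^4}(\delta)=-1$, whereas the paper applies $f_{\delta,s}$ to the image of $U_{\psi_{h,1}}$ and contradicts $\mathrm{N}_{q^8/q^4}(h)=-1$ via $h^{q^2+1}=1$; your bookkeeping (single contributions at odd residues, double at even ones, and the equations at $k=2,6$) checks out. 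One small point to patch: a general element of ${\rm Aut}(\mathbb{F}_{q^8})$ has the form $\lambda\mapsto\lambda^{p^iq^j}$ with $q=p^e$ and $0\leq i<e$, not merely $\lambda\mapsto\lambda^{q^t}$; the paper carries the $p^i$ explicitly. This is harmless for your argument, since absorbing the automorphism into the variable replaces $\delta$ by $\delta^{p^iq^t}$, which still has norm $-1$, and $(-1)^{p^iq^{t+1}}=-1$ because $q$ is odd, so the final contradiction survives verbatim — but the case should be acknowledged.
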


\begin{proof}
By  Theorem \ref{JohnEquiv}, it is enough to prove that $U_{\delta,s}$ and $U_{\psi_{h,1}}$ are not ${\rm \Gamma L}(2,q^8)$-equivalent.
Assume by contradiction that there exist $\varphi\in{\rm Aut}(\mathbb{F}_{q^8})$ and $a,b,c,d\in\mathbb{F}_{q^8}$ such that $ad-bc\ne0$ and the following holds: for every $y\in\mathbb{F}_{q^8}$, there exists $z\in\mathbb{F}_{q^8}$ satisfying
\begin{equation}\label{eq:equiv}
\begin{pmatrix} a & b \\ c & d \\ \end{pmatrix}\cdot
\begin{pmatrix} y^{\varphi} \\ \psi_{h,1}(y)^{\varphi} \\ \end{pmatrix} =
\begin{pmatrix} z \\ f_{\delta,s}(z) \\ \end{pmatrix}.
\end{equation}
Write the action of $\varphi$ on $\mathbb{F}_{q^8}$ as $\lambda\mapsto(\lambda^{p^i})^{q^j}$, with $1\leq p^i<q$ and $0\leq j<8$. The map $(\mu,\rho)\mapsto(\mu^{q^j},\rho^{q^j})$ is $\fq$-linear on $\mathbb{F}_{q^8}\times\mathbb{F}_{q^8}$ and hence is in ${\rm GL}(2,q^8)$. Thus, up to changing $a,b,c,d$, we can assume that $j=0$.

The condition \eqref{eq:equiv} is equivalent to require that, for every $y\in\mathbb{F}_{q^8}$,
\[
c y^{\varphi} + d\psi_{h,1}(y)^{\varphi} - f_{\delta,s}(ay^{\varphi}+b\psi_{h,1}(y)^{\varphi}) = 0.
\]
Suppose $s=1$. Then every element of $\mathbb{F}_{q^8}$ is a root of the following polynomial:
\[
\begin{split}
P(x)&= \left(c-b^q h^{(q-1)p^i}-\delta b^{q^5}\right)x^{p^i} + \left(d-a^q\right)x^{qp^i}-\left(b^q+\delta b^{q^5} h^{(q^6+q^5)p^i}\right)x^{q^2p^i}+ d x^{q^3p^i}  \\
 & - \left(b^q+\delta b^{q^5}h^{(q^5-q^4)p^i}\right)x^{q^4p^i} + \left(d h^{(q+1)p^i}-\delta a^{q^5}\right) x^{q^5p^i} - \left(b^q h^{(q^2+q)p^i} + \delta b^{q^5}\right)x^{q^6p^i} + d h^{(1-q^7)p^i}x^{q^7p^i}.
\end{split}
\]
Since the degree of $P(x)$ is smaller than $q^8$, this implies that $P(x)$ is the zero polynomial.
The vanishing of its coefficients yields
\begin{equation}\label{eq:siste}
\begin{cases}
a=d=0, \\
c=b^q h^{(q-1)p^i}+\delta b^{q^5}, \\
b^q h^{(q^2+q)p^i} + \delta b^{q^5}=0, \\
b^q +\delta b^{q^5} h^{(q^6+q^5)p^i}=0, \\
b^q+\delta b^{q^5}h^{(q^5-q^4)p^i}=0. \\
\end{cases}
\end{equation}
Since $\delta\ne0$ and $b\ne0$ (as $ad-bc\ne0$), the last two equations in System \eqref{eq:siste} yield $h^{q^2+1}=1$. Since $4$ does not divide $q^2+1$, this yields a contradiction to $\mathrm{N}_{q^8/q^4}(h)=-1$.
This proves the claim for $s=1$, and hence for any $s\in\{1,3,5,7\}$ by Proposition \ref{prop:from1toanys}. 
\end{proof}

Therefore, for any $\delta\in\mathbb{F}_{q^8}^*$ with $\mathrm{N}_{q^8/q^4}=-1$ and any $s\in\{1,3,5,7\}$, the MRD code $\mathcal{C}_{\delta,s}\subseteq\mathcal{L}_{8,q}$ is not equivalent to a code $\mathcal{C}_{G(r)}$ or $\mathcal{C}_{T(\epsilon,r)}$ (Proposition \ref{prop:easy}), neither to a code $\mathcal{C}_{Q(h,r)}$ (Lemma \ref{lemma:restriz} and Proposition \ref{prop:nonequiv1}).
The proof of Theorem \ref{th:main} is now complete.

\section{Proof of Lemmas \ref{lemma:V}, \ref{lemma:irreduciblecurve}, \ref{lemma:reduciblecurve}}\label{sec:lemmaqodd}

In this section we prove three technical lemmas which were stated and applied in Section \ref{sec:MRD}.
The proofs of Lemmas \ref{lemma:irreduciblecurve} and \ref{lemma:reduciblecurve} use arguments from the theory of algebraic function fields and their finite extensions, for which we refer the reader to the monograph \cite{Sti}.
In particular, we will show that certain quadratic equations $X^2=u$ define Kummer extensions (see \cite[Proposition 3.7.3]{Sti}) of an algebraic function field $L$, by showing that $u$ is not a square in $L$.
To this aim, we will make use of Laurent series expansions with respect to local parameters at certain places (see \cite[Theorem 4.2.6]{Sti}).

\subsection{Proof of Lemma \ref{lemma:V}}\label{sec:V}

By Proposition \ref{prop:sha}(i), the dimension of $\mathcal{V}$ is at least $3$. To prove the equality, it is enough to provide a linear subspace $\mathcal{L}$ of $\mathbb{P}^7$ of dimension $3$ such that $\mathcal{V}\cap\mathcal{L}=\emptyset$; see Proposition \ref{prop:sha}(ii).
Let $\mathcal{L}$ be the $3$-dimensional linear space defined by the equations
\[
X_3=0,\quad Y_0=0,\quad Y_1=0,\quad Y_2+Y_3=0.
\]
If $\mathcal{V}\cap\mathcal{L}\ne\emptyset$, then any point $P=(\bar{w}_0:\bar{w}_1:\bar{w}_2:\bar{x}_3:\bar{y}_0:\bar{y}_1:\bar{y}_2:\bar{y}_3)$ in $\mathcal{V}\cap\mathcal{L}$ satisfies
\[
\bar{w}_0=\bar{x}_3=\bar{y}_0=\bar{y}_1=0,\quad \bar{w}_1^2=\eta^{q^2}\bar{y}_2^2,\quad \bar{w}_2^2 = (\eta^{q^2} + \eta^{q^3} + 2\beta^{q^2}\eta^{\frac{q^3+q^2}{2}})\bar{y}_2^2,\quad (\bar{w}_1+\bar{w}_2)^2 = \eta^{q^3} \bar{y}_2^2.
\]
This implies
\[
2(\eta^{q^2} + \beta^{q^2}\eta^{\frac{q^3+q^2}{2}})\bar{y}_2^2=-2\bar{w}_1\bar{w}_2,
\]
whence
\[
4(\eta^{q^2} + \beta^{q^2}\eta^{\frac{q^3+q^2}{2}})^2\bar{y}_2^4=4\bar{w}_1^2\bar{w}_2^2=4\cdot \eta^{q^2}\bar{y}_2^2\cdot (\eta^{q^2} + \eta^{q^3} + 2\beta^{q^2}\eta^{\frac{q^3+q^2}{2}})\bar{y}_2^2
\]
and thus
\[
4\eta^{q^3+q^2}(\beta^{2q^2}-1)\bar{y}_2^4=0.
\]
The case $\bar{y}_2=0$ cannot occur by the equations of $\mathcal{V}$ and $\mathcal{L}$, therefore $\beta^{2q^2}-1=0$, a contradiction to $\beta\ne\pm1$.

Finally, since $\mathcal{V}\subset\mathbb{P}^7$ has dimension $3$ and its ideal is generated by $4$ polynomials, $\mathcal{V}$ is a complete intersection. Thus, $\mathcal{V}$ has degree $2\cdot2\cdot2\cdot2=16$.

\subsection{Proof of Lemma \ref{lemma:irreduciblecurve}}\label{sec:lemmairred}

Note that the homogeneous equations of $\mathcal{V}$ can be dehomogenized with respect to the hyperplane at infinity $\mathcal{H}_0\colon Y_0=0$, since it is easily seen that $\mathcal{V}\cap\mathcal{H}_0$ has dimension strictly smaller than $3$.
Thus, given $k\in\KK=\overline{\mathbb{F}_{q^4}}$ and the hyperplane $\Pi:Y_2=Y_1+k$, $\mathcal{V}\cap\Pi$ has affine equations
\[
\mathcal{V}\cap\Pi: \begin{cases}
W_0^2 = \eta + \eta^{q} Y_1^2 - 2\beta\eta^{\frac{q+1}{2}}Y_1,\\
W_1^2 = \eta^q Y_1^2 + \eta^{q^2} (Y_1+k)^2 - 2\beta^q\eta^{\frac{q^2+q}{2}}Y_1(Y_1+k),\\
W_2^2 = \eta^{q^2} (Y_1+k)^2 + \eta^{q^3} Y_3^2 - 2\beta^{q^2}\eta^{\frac{q^3+q^2}{2}}(Y_1+k)Y_3,\\
(W_0+W_1+W_2)^2 = \eta^{q^3} Y_3^2 + \eta - 2\beta^{q^3}\eta^{\frac{1+q^3}{2}}Y_3\\
\end{cases}.
\]
Therefore, given a transcendental $y_1$ over $\KK$, if we prove that the equations
\begin{equation}\label{eq:lemmafunctionfield}
\begin{cases}
w_0^2 = \eta + \eta^{q} y_1^2 - 2\beta\eta^{\frac{q+1}{2}}y_1,\\
w_1^2 = \eta^q y_1^2 + \eta^{q^2} (y_1+k)^2 - 2\beta^q\eta^{\frac{q^2+q}{2}}y_1(y_1+k),\\
w_2^2 = \eta^{q^2} (y_1+k)^2 + \eta^{q^3} y_3^2 - 2\beta^{q^2}\eta^{\frac{q^3+q^2}{2}}(y_1+k)y_3,\\
(w_0+w_1+w_2)^2 = \eta^{q^3} y_3^2 + \eta - 2\beta^{q^3}\eta^{\frac{1+q^3}{2}}y_3\\
\end{cases}
\end{equation}
define an algebraic function field $\KK(y_1,y_3,w_0,w_1,w_2,x_3)/\KK$ of transcendence degree $2$, then $\KK(y_1,y_3,w_0,w_1,w_2,x_3)$ is the function field of a non-repeated absolutely irreducible surface $\mathcal{S}=\mathcal{V}\cap\Pi$ and Lemma \ref{lemma:irreduciblecurve} is proved.
Note that $x_3$ does not appear in Equations \eqref{eq:lemmafunctionfield}, and hence is transcendental over $\KK(y_1,y_3,w_0,w_1,w_2)$ (indeed, $\mathcal{V}\cap\Pi$ is a cone with vertex $(0:0:0:1:0:0:0:0)$). Thus, in order to prove Lemma \ref{lemma:irreduciblecurve} it is enough to show that Equations \eqref{eq:lemmafunctionfield} define an algebraic function field $\KK(y_1,y_3,w_0,w_1,w_2)/\KK$ of transcendence degree $1$.

After eliminating $y_3$ using the third and fourth equation of Equations \eqref{eq:lemmafunctionfield}, we obtain the following equivalent system:
\begin{equation}\label{eq:sist}
\begin{cases}
w_0^2 = \eta  + \eta^{q} y_1^2 - 2\beta\eta^{\frac{q+1}{2}}y_1,\\
w_1^2 = \eta^q y_1^2 + \eta^{q^2} (y_1+k)^2 - 2\beta^q\eta^{\frac{q^2+q}{2}}y_1 (y_1+k),\\
a w_2^2 +b w_2 + c = 0, \\
y_3 = \frac{ \eta^{q}y_1^2 + \eta^{q^2}(y_1+k)^2 - \beta\eta^{\frac{q+1}{2}}y_1 - \beta^q\eta^{\frac{q^2+q}{2}}y_1 (y_1+k) + w_0w_1 + w_1w_2 + w_0w_2 }{ \beta^{q^2}\eta^{\frac{q^3+q^2}{2}}(y_1+k) - \beta^{q^3}\eta^{\frac{q^3+1}{2}} }, \\
\end{cases}
\end{equation}
where
\[
\begin{aligned}
a \,=\, & 2w_0w_1 + \big(- 2\beta^q \eta^{\frac{q^2+q}{2}} - \beta^{2q^2}\eta^{q^2} + 2\eta^{q} + \eta^{q^2}\big)y_1^2 + 2\big( - \beta\eta^{\frac{q+1}{2}} - \beta^{q}\eta^{\frac{q^2+q}{2}}k -
        \beta^{2q^2}\eta^{q^2}k + \beta^{q^3+q^2}\eta^{\frac{q^2+1}{2}}\\
				& + \eta^{q^2}k\big)y_1 - \beta^{2q^2}\eta^{q^2}k^2 + 2\beta^{q^3+q^2}\eta^{\frac{q^2+1}{2}}k - \beta^{2q^3}\eta + \eta +
        \eta^{q^2}k^2, \\
b \,=\, & \beta^{q}\eta^{\frac{q^2+q}{2}}(-6w_0-2w_1)y_1^2 - 2\beta^{2q^2}\eta^{q^2}(w_0+w_1)y_1^2 + 4\eta^{q}(w_0+w_1)y_1^2 + \eta^{q^2}(4w_0+2w_1)y_1^2\\
& + \beta\eta^{\frac{q+1}{2}}(-2w_0-6w_1)y_1  + \beta^{q}\eta^{\frac{q^2+q}{2}}k (-6w_0-2w_1)y_1 -
    4\beta^{2q^2}\eta^{q^2}k (w_0+w_1)y_1 + 2\beta^{q^3+q^2}\eta^{\frac{q^2+1}{2}}(w_0+w_1)y_1\\
		&+ \eta^{q^2}k (8w_0+4w_1)y_1 - 2\beta^{2q^2}\eta^{q^2}k^2 (w_0+w_1) + 2\beta^{q^3+q^2}\eta^{\frac{q^2+1}{2}}k (w_0+w_1) + \eta^{q^2}k^2 (4w_0+2w_1) + 2\eta w_1,\\
c \,=\, & 2w_0w_1\big[ \big(-\beta^{q}\eta^{\frac{q^2+q}{2}} - \beta^{2q^2}\eta^{q^2} + \eta^{q} + \eta^{q^2} \big)y_1^2
+\big( -\beta\eta^{\frac{q+1}{2}} - \beta^{q}\eta^{\frac{q^2+q}{2}}k - 2\beta^{2q^2}\eta^{q^2}k + \beta^{q^3+q^2}\eta^{\frac{q^2+1}{2}} + 2\eta^{q^2}k \big)y_1 \\
& -\beta^{2q^2}\eta^{q^2}k^2 + \beta^{q^3+q^2}\eta^{\frac{q^2+1}{2}}k + \eta^{q^2}k^2 \big] + \big( \beta^{2q}\eta^{q^2+q} + 2\beta^{2q^2+q}\eta^{\frac{3q^2+q}{2}} - 4\beta^{q}\eta^{\frac{q^2+3q}{2}} - 2\beta^{q}\eta^{3q^2+q} - 2\beta^{2q^2}\eta^{q^2+q}\\
&  - \beta^{2q^2}\eta^{2q^2} + 2\eta^{2q} + 3\eta^{q^2+q} + \eta^{2q^2} \big)y_1^4 + 2\big(  3\beta^{q+1}\eta^{\frac{q^2+2q+1}{2}} + \beta^{2q^2+1}\eta^{\frac{2q^2+q+1}{2}} - 2\beta\eta^{\frac{3q+1}{2}} - 2\beta\eta^{\frac{2q^2+q+1}{2}} \\
& + \beta^{2q}\eta^{q^2+q}k + 3\beta^{2q^2+q}\eta^{\frac{3q^2}{2}}k - \beta^{q^3+q^2+q}\eta^{\frac{2q^2+q+1}{2}} - 2\beta^{q}\eta^{\frac{q^2+3q}{2}}k - 3\beta^{q}\eta^{\frac{3q^2+q}{2}}k - 2\beta^{2q^2}\eta^{q^2+q}k - 2\beta^{2q^2}\eta^{2q^2}k\\
& + \beta^{q^3+q^2}\eta^{\frac{q^2+2q+1}{2}} + 3\eta^{q^2+q}k + 2\eta^{2q^2}k  \big)y_1^3 + \big( \beta^{2}\eta^{q+1} + 6\beta^{q+1}\eta^{\frac{q^2+2q+1}{2}}k + 4\beta^{2q^2+1}\eta^{\frac{2q^2+q+1}{2}}k \\
& - 2\beta^{q^3+q^2+1}\eta^{\frac{q^2+q+2}{2}} - 8\beta\eta^{\frac{2q^2+q+1}{2}}k + \beta^{2q}\eta^{q^2+q}k^2 + 6\beta^{2q^2+q}\eta^{\frac{3q^2+q}{2}}k^2 - 4\beta^{q^3+q^2+q}\eta^{\frac{2q^2+q+1}{2}}k - 2\beta^{q}\eta^{\frac{q^2+q+2}{2}}\\
& - 6\beta^{q}\eta^{\frac{3q^2+q}{2}}k^2 - 2\beta^{2q^2}\eta^{q^2+q}k^2 - 6\beta^{2q^2}\eta^{2q^2}k^2 + 2\beta^{q^3+q^2}\eta^{\frac{q^2+2q+1}{2}}k + \beta^{2q^3}\eta^{q^2+1} + \eta^{q+1} + \eta^{q^2+1} + 3\eta^{q^2+q}k^2 \\
& + 6\eta^{2q^2}k^2 \big)y_1^2 + 2\big( \beta^{2q^2+1}\eta^{\frac{2q^2+q+1}{2}}k^2 - \beta^{q^3+q^2+1}\eta^{\frac{q^2+q+2}{2}}k - 2\beta\eta^{\frac{2q^2+q+1}{2}}k^2 + \beta^{2q^2+q}\eta^{\frac{3q^2+q}{2}}k^3 \\
& - \beta^{q^3+q^2+q}\eta^{\frac{2q^2+q+1}{2}}k^2 - \beta^{q}\eta^{\frac{q^2+q+2}{2}}k - \beta^{q}\eta^{\frac{3q^2+q}{2}}k^3 - 2\beta^{2q^2}\eta^{q^2}k^3 + \beta^{2q^3}\eta^{q^2+1}k + \eta^{q^2+1}k + 2\eta^{2q^2}k^3 \big)y_1 \\
& - \beta^{2q^2}\eta^{2q^2}k^4 + \beta^{2q^3}\eta^{q^2+1}k^2 + \eta^{q^2+1}k^2 + \eta^{2q^2}k^4.\\
\end{aligned}
\]
Since the fourth equation of \eqref{eq:sist} is linear in $y_3$, it is enough to prove that the first, second and third equation of \eqref{eq:sist} define an algebraic function field $\KK(y_1,w_0,w_1,w_2)/\KK$ of transcendence degree $1$ to have that $y_3\in\KK(y_1,w_0,w_1,w_2)$ and Lemma \ref{lemma:irreduciblecurve} is proved.

\subsubsection{The function field $\KK(y_1,w_0)$}\label{sec:primaest}
Define the polynomial
\[F_0(Y):=\eta^{q}Y^2 - 2\beta\eta^{\frac{q+1}{2}}Y+\eta=\eta^q(Y-\lambda_1)(Y-\lambda_2)\in\KK[Y],
\]
having distinct roots 
\[
\lambda_1=\eta^{\frac{1-q}{2}}(\beta-\sqrt{\beta^2-1}),\quad \lambda_2=\eta^{\frac{1-q}{2}}(\beta+\sqrt{\beta^2-1}).
\]
Then the first equation $w_0^2=F_0(y_1)$ of \eqref{eq:sist} defines a Kummer extension $\KK(y_1,w_0)/\KK(y_1)$ of degree $2$ with exactly two ramified places, namely the zeros of $y_1-\lambda_1$ and $y_1-\lambda_2$.

\subsubsection{The function field $\KK(y_1,w_0,w_1)$}\label{sec:secondaest}
Define the polynomial
\[
F_1(Y):= \left(\eta^{q}-2\beta\eta^{\frac{q+1}{2}}+\eta\right)^{q}Y^2 - 2k\left(\beta\eta^{\frac{q+1}{2}} - \eta^{q}\right)^{q}Y + k^2\eta^{q^2}\in\KK[Y].
\]
Up to this point, the only require on $\eta$ is that it is a non-square in $\mathbb{F}_{q^4}$, and hence we have $(q^4-1)/2$ choices for $\eta$.
The coefficient of $Y^2$ in $F_1(Y)$ vanishes for at most $q$ values of $\eta$.
Since $q<(q^4-1)/2$, we can choose $\eta$ as a non-square $\mathbb{F}_{q^4}$ such that the coefficient of $Y^2$ in $F_1(Y)$ does not vanish.
Now, choose $k\ne0$. Then $F_1(Y)$ has distinct roots
\[
\mu_1=k\cdot\left(\frac{-\eta^{q}+\eta^{\frac{q+1}{2}}(\beta-\sqrt{\beta^{2}-1})}{\eta^{q}-2\beta\eta^{\frac{q+1}{2}}+\eta}\right)^q,\qquad \mu_2=k\cdot\left(\frac{-\eta^{q}+\eta^{\frac{q+1}{2}}(\beta+\sqrt{\beta^{2}-1})}{\eta^{q}-2\beta\eta^{\frac{q+1}{2}}+\eta}\right)^q.
\]
Up to excluding four values of $k\in\KK$, we have $\{\lambda_1,\lambda_2\}\cap\{\mu_1,\mu_2\}=\emptyset$.
Thus, $F_1(y_1)$ has four simple zeros in $\KK(y_1,w_0)$, namely the zeros of $y_1-\mu_1$ and $y_1-\mu_2$.
Then the second equation $w_1^2=F_1(y_1)$ of \eqref{eq:sist} defines a Kummer extension $\KK(y_1,w_0,w_1)/\KK(y_1,w_0)$ of degree $2$, in which the ramified places are the zeros of $y_1-\mu_1$ and $y_1-\mu_2$.

Note that the ramified places in $\KK(y_1,w_0,w_1)/\KK(y_1)$ are the $8$ zeros of $y_1-\lambda_1$, $y_1-\lambda_2$, $y_1-\mu_1$ and $y_1-\mu_2$, all of them with ramification index $2$.

\subsubsection{The function field $\KK(y_1,w_0,w_1,w_2)$}
Define $u_2=w_2+\frac{b}{2a}$.
Then the third equation of \eqref{eq:sist} reads
\begin{eqnarray*}
u_2^2 &=& \frac{(b/2)^2-ac}{a^2}\\
&=& \left(\frac{ \beta^{q^3}\sqrt{\eta}  - \beta^{q^2}\sqrt{\eta}^{q^2}(y_1+k)}{a} \right)^2 \left(w_0w_1 + A -\eta^{\frac{q^2+1}{2}}\gamma (y_1+k) \right)\left(w_0w_1 + A +\eta^{\frac{q^2+1}{2}}\gamma (y_1+k)\right)
\end{eqnarray*}
where
\[
A= -\beta\eta^{\frac{q+1}{2}}y_1 + \beta^{q^3+q^2}\eta^{\frac{q^2+1}{2}}(y_1+k) + \eta^{q}y_1^2 - \beta^{q}\eta^{\frac{q^2+q}{2}}y_1(y_1+k), \qquad \gamma^2 = (\beta^{2q^3}-1)(\beta^{2q^2}-1).
\]
Write $u_2^2=\chi^2\cdot\xi\cdot\zeta$, where 
\[
\xi= w_0w_1 + A -\eta^{\frac{q^2+1}{2}}\gamma (y_1+k)=: w_0w_1+ \bar{a}y_1^2 + \bar{b}y_1 + \bar{c},
\]
\[\zeta=\xi+2\eta^{\frac{q^2+1}{2}}\gamma (y_1+k)= w_0w_1 + A +\eta^{\frac{q^2+1}{2}}\gamma (y_1+k)=w_0w_1+ \bar{a}y_1^2 + (\bar{b}+2\eta^{\frac{q^2+1}{2}}\gamma)y_1 + \bar{c}+2\eta^{\frac{q^2+1}{2}}\gamma k, 
\]
with
\[\bar{a}=\eta^{q}-\beta^{q}\eta^{\frac{q^2+q}{2}},\quad \bar{b}=-\beta^q\eta^{\frac{q^2+q}{2}}k - \beta\eta^{\frac{q+1}{2}} + \beta^{q^3+q^2}\eta^{\frac{q^2+1}{2}}-\eta^{\frac{q^2+1}{2}}\gamma,\quad \bar{c}=k(\beta^{q^3+q^2}\eta^{\frac{q^2+1}{2}} - \eta^{\frac{q^2+1}{2}}\gamma).
\]
In order to prove that the third equation of \eqref{eq:sist} defines a Kummer extension $\KK(y_1,w_0,w_1,w_2)/\KK(y_1,w_0,w_1)$ of degree $2$, we show that $\xi\cdot\zeta$ is not a square in $\KK(y_1,w_0,w_1)$.

Define $\theta:= w_0w_1-(\bar{a}y_1^2 + \bar{b}y_1 + \bar{c})$.
\begin{itemize}
    \item[(i)] We prove that the functions $\xi,\zeta,\theta$ have exactly four poles with valuation $-2$ at each of them, and therefore have exactly eight zeros, counted with multiplicity.
    In fact, the poles of $\xi,\zeta,\theta$ are among the four simple poles $P_{\infty}^i$ of $y_1$ in $\KK(y_1,w_0,w_1)$.
    Clearly $1/y_1$ is a local parameter at $P_{\infty}^i$. By Sections \ref{sec:primaest} and \ref{sec:secondaest}, the first term of the Laurent series expansion of $w_0w_1$ at $P_{\infty}^i$ with respect to $1/y_1$ is $\rho y_1^2$, where $\rho^2=\eta^q(\eta^{q^2}-2\beta^{q}\eta^{\frac{q^2+q}{2}}+\eta^q)$. If $\rho=\bar{a}$ or $\rho=-\bar{a}$, then $\rho^2=\bar{a}^2$ which implies $\beta^{2q}=1$, a contradiction.
    \item[(ii)] We prove that, up to excluding a finite number of $k\in\KK$, the zeros of $\xi\cdot\theta$ in $\KK(y_1,w_0,w_1)$  are unramified over $\KK(y_1)$.
    Suppose by contradiction that a zero $P$ of $\xi\cdot\theta$ is also a zero of $y_1-\lambda_1$ or $y_1-\lambda_2$. Then $w_0(P)=0$, and the resultant $R(k)$ between $F_0(Y)$ and $\bar{a}Y^2+\bar{b}Y+\bar{c}$ with respect to $Y$ vanishes. $R(k)$ is a polynomial in the variable $k$ whose coefficients must vanish, otherwise it is enough to exclude $\deg_k R(k)$ values of $k\in\KK$ to get a contradiction. This gives
        \[
    	2\gamma\beta^{q+1} - 2\gamma\beta^{q^3+q^2} + 2\beta^{q+2}\eta^{\frac{q-q^2}{2}} - \beta^2\eta^{q-q^2} - 2\beta^{q^3+q^2+q+1} + \beta^{q} - 2\beta^{q}\eta^{\frac{q-q^2}{2}} + 2\beta^{2q^3+2q^2} - \beta^{2q^2} - \beta^{2q^3} + \eta^{q-q^2} + 1=0,
\]
\[
        2\gamma\beta^{q+1} - 2\gamma\beta^{q^3+q^2} - 2\beta^{q^3+q^2+q+1} + \beta^{2q} + 2\beta^{2q^3+2q^2} - \beta^{2q^2} - \beta^{2q^3} + 1=0,
\]
\begin{eqnarray*}
        &&2\gamma \beta^{q+2}\eta^{\frac{q^2+1}{2}} - \gamma \beta^2 \eta^{\frac{q+1}{2}} - 2\gamma \beta^{q^3+q^2+1} \eta^{\frac{q^2+1}{2}} + \gamma \eta^{\frac{q+1}{2}} - 2\beta^{q^3+q^2+q+2}\eta^{\frac{q^2+1}{2}} + \beta^{q^3+q^2+2}\eta^{\frac{q+1}{2}} \\&&
            +\beta^{2q+1}\eta^{\frac{q^2+1}{2}} + 2\beta^{2q^3+2q^2+1}\eta^{\frac{q^2+1}{2}} - \beta^{2q^2+1}\eta^{\frac{q^2+1}{2}} - \beta^{2q^3+1}\eta^{\frac{q^2+1}{2}} + \beta \eta^{\frac{q^2+1}{2}} - \beta^{q^3+q^2}\eta^{\frac{q+1}{2}}=0.
\end{eqnarray*}
    After eliminating $\gamma$, these two equations yield
    \begin{equation}\label{eq:condbeta}
    \beta^{2q}-\beta^{2q^2}-\beta^{2q^3}+1=0.
    \end{equation}
    Equation \eqref{eq:condbeta}, together with its conjugates
    \[
    \beta^{2q^2}-\beta^{2q^3}-\beta^{2}+1=0,\qquad
    \beta^{2q^3}-\beta^{2}-\beta^{2q}+1=0,\qquad
    \beta^{2}-\beta^{2q}-\beta^{2q^2}+1=0
    \]
    under the $q$-Frobenius map, provides a contradiction to $\beta\ne\pm1$. 

    The case of a zero $P$ of $\xi\cdot\theta$ being also a zero of $y_1-\mu_1$ or $y_1-\mu_2$ is ruled out analogously through the coefficients of the resultant between $F_1(Y)$ and $\bar{a}Y^2+\bar{b}Y+\bar{c}$ seen as a polynomial in $k$, whose vanishing provides a contradiction to $\beta\ne\pm1$. 

    \item[(iii)] Let $F_2(Y)\in\KK[Y]$ be defined by
    \[
\xi\cdot\theta=w_0^2w_1^2 - (\bar{a}y_1^2 + \bar{b}y_1 + \bar{c})^2 = c_4 y_1^4 + c_3 y_1^3 + c_2 y_1^2 + c_1 y_1 + c_0 =: F_2(y_1).
\]
By direct computations we have 
\begin{eqnarray*}
c_4 &=& -\eta^{q^2+q}(\beta^{2q}-1),\\
 c_3 &=& - 2\eta^{q^2+q}[(\beta^{2q}-1)k+\eta^{\frac{1-q}{2}}(\beta^{q}\gamma+\beta-\beta^{q^3+q^2+q})+\eta^{\frac{1-q^2}{2}}(\beta^{q^3+q^2}-\beta^{q+1}-\gamma)],\\
c_2&=&- \eta^{q^2+q}(\beta^{2q}-1)k^2 - 2\eta^{q^2+q}[\eta^{\frac{1-q}{2}}(2\gamma\beta^{q}+2\beta-2\beta^{q^3+q^2+q})+\eta^{\frac{1-q^2}{2}}(\beta^{q^3+q^2}-\beta^{q+1}-\gamma)] k\\
 &&- \eta[2\gamma\beta\eta^{\frac{q^2+q}{2}}-2\gamma\beta^{q^3+q^2}\eta^{q^2}+\beta^2\eta^{q}-2\beta^{q^2+q^2+1}\eta^{\frac{q^2+q}{2}}+2\beta^{q}\eta^{\frac{q^2+q}{2}}+2\beta^{2q^3+2q^2}\eta^{q^2}-\beta^{2q^2}\eta^{q^2}-\beta^{2q^3}\eta^{q^2}-\eta^{q}],\\
c_1 &=& - 2\eta^{q^2+1}k [\eta^{\frac{q-1}{2}}(\gamma\beta^q+\beta-\beta^{q^3+q^2+q})k+\eta^{\frac{q-q^2}{2}}(\gamma\beta-\beta^{q^3+q^2+1}+\beta^q)-2\gamma\beta^{q^3+q^2}+2\beta^{2q^3+2q^2}-\beta^{2q^2}-\beta^{2q^3}],\\
c_0 &=& - \eta^{q^2+1}(\gamma-\beta^{q^3+q^2}-1)(\gamma-\beta^{q^3+q^2}+1)k^2.
\end{eqnarray*}
Note that $F_2(Y)$ has degree $4$, since $c_4=- \eta^{q^2+q}(\beta^{2q}-1)\neq 0$.

We prove that, if $F_2(Y)$ is not a square in $\KK[Y]$, then $\xi\cdot\zeta$ is not a square in $\KK(y_1,w_0,w_1)$.
In fact, if $F_2(Y)$ is not a square in $\KK[Y]$, then $F(Y)$ has two distinct roots $\rho_1,\rho_2\in\KK$ with odd multiplicity. Then, for any $i=1,2$, $\xi$ has two unramified zeros over $\rho_i$, with valuation equal to the odd multiplicity of $\rho_i$ as a zero of $F_2(Y)$. 
Suppose by contradiction that $\xi\cdot\zeta$ is a square, and hence has even valuation at every place. Then $\zeta$ has zeros over $\rho_1$ and $\rho_2$, and $\zeta-\xi=2\eta^{\frac{q^2+1}{2}}\gamma (y_1+k)$ has zeros over $\rho_1$ and $\rho_2$, a contradiction to $\rho_1\ne\rho_2$.

\item[(iv)] We prove that $F_2(Y)$ is not a square in $\KK[Y]$.
Suppose by contradiction that
\[
F_2(Y)=(d_2Y^2+d_1Y+d_0)^2
\]
for some $d_i\in\KK$, that is,
\[
d_0^2=c_0,\quad 2d_0d_1=c_1,\quad d_1^2+2d_0d_1=c_2,\quad 2d_1d_2=c_3,\quad d_2^2=c_4.
\]
This implies
\begin{equation}\label{eq:daje}
4c_0(c_2-c_1)=c_1^2, \quad 4c_4(c_2-c_1)=c_3^2.
\end{equation}

Equation \eqref{eq:daje} provides two polynomials in $k$ that must be identically zero, namely $p_1(k):=c_1^2-4c_0(c_2-c_1)$ and $p_2(k):=c_3^2-4c_4(c_2-c_1)$, otherwise a contradiction is obtained by excluding a finite number of $k\in\KK$. 
By direct computation, the coefficient of $k^2$ in $p_2(k)$ is $\beta^q\gamma + \beta - \beta^{q^3+q^2+q},$ and hence
\begin{equation}\label{eq:1}
\beta^q\gamma + \beta - \beta^{q^3+q^2+q}=0.
\end{equation}
On the other hand, the vanishing of the coefficient of $k^2$ in $p_1(k)$ yields
\begin{equation}\label{eq:2}
4\eta^{q^2+q+2}(\gamma^2 + 2\gamma\beta^{q+1} - 2\gamma\beta^{q^3+q^2} + \beta^2 - 2\beta^{q^3+q^2+q+1} + \beta^{2q} + \beta^{2q^3+2q^2} - 1)=0.
\end{equation}
The elimination of $\gamma$ through the resultant between the left-hand sides in Equations \eqref{eq:1} and \eqref{eq:2} yields $\beta^{2q}-\beta^2=0$. This is a contradiction to the hypothesis of Lemma \ref{lemma:irreduciblecurve}.

\end{itemize}

\subsection{Proof of Lemma \ref{lemma:reduciblecurve}}\label{sec:lemmaqodd2}

Note that $\beta^{q-1}\in\{1,-1\}$, and that $\mathcal{V}$ can be equivalently defined by the following linear combinations of the equations in \eqref{eq:V}: \begin{equation}\label{eq:V2}
\mathcal{V}\colon\begin{cases}
W_0^2=\eta Y_0^2 + \eta^{q} Y_1^2 - 2\beta\eta^{\frac{q+1}{2}}Y_0 Y_1,\\
(W_1-W_0)(W_1+W_0)=-\eta(Y_0 - \beta^{q-1}\eta^{\frac{q^2-1}{2}}Y_2)(Y_0 - 2\beta\eta^{\frac{q-1}{2}}Y_1 + \beta^{q-1}\eta^{\frac{q^2-1}{2}}Y_2),\\
(W_2-W_1)(W_2+W_1)=-\eta^q(Y_1-\beta^{q-1}\eta^{\frac{q^3-q}{2}}Y_3)(Y_1 - 2\beta\eta^{\frac{q^2-q}{2}}Y_2 + \beta^{q-1}\eta^{\frac{q^3-q}{2}}Y_3),\\
(W_0+W_1)(W_1+W_2)= \beta\eta^{\frac{q+1}{2}}(Y_1-\beta^{q-1}\eta^{\frac{q^3-q}{2}}Y_3)(Y_0-\beta^{q-1}\eta^{\frac{q^2-1}{2}}Y_2).\\
\end{cases}
\end{equation}

\begin{itemize}
    \item First we show that the equations
\begin{equation}\label{eq:lacomp}
\begin{cases}
w_0^2 = \eta  + \eta^{q} y_1^2 - 2\beta\eta^{\frac{q+1}{2}}y_1,\\
w_1^2 = \eta^q y_1^2 + \eta^{q^2} y_2^2 - 2\beta^q\eta^{\frac{q^2+q}{2}}y_1 y_2,\\
y_3 = - B'/A',\\
w_2 = \beta\eta^{\frac{q+1}{2}}\frac{(y_1-\beta^{q-1}\eta^{\frac{q^3-q}{2}}y_3)(1-\beta^{q-1}\eta^{\frac{q^2-1}{2}}y_2)}{w_0+w_1}-w_1,\\
\end{cases}
\end{equation}
where
\[
    \begin{split}
    A' = & - 2\beta^{q-1}\eta^{\frac{q^3-1}{2}} w_0 w_1 + \beta^{q-1}(\beta^2-1)\eta^{\frac{q^3+1}{2}} + 2\beta^{q}\eta^{\frac{q^3+q}{2}}y_1 - 2\beta^{2}\eta^{\frac{q^3+q^2}{2}}y_2 \\
    & - 2\beta^{q-1}\eta^{\frac{q^3+2q-1}{2}}y_1^2  + 2\beta\eta^{\frac{q^3+q^2+q-1}{2}}y_1 y_2 + \beta^{q-1}(\beta^2-1)\eta^{\frac{q^3+2q^2-1}{2}}y_2^2,\\
    B' = & \, 2(\beta - \eta^{\frac{q-1}{2}}y_1 + \beta^{q}\eta^{\frac{q^2-1}{2}}y_2)w_0w_1 - (\beta^2+1)\eta^{\frac{q+1}{2}}y_1(1  +\eta^{q^2-1} y_2^2 ) + 2\beta^{q}\eta^{\frac{q^2+1}{2}}y_2 \\
    &  + 4\beta\eta^{q}y_1^2 - 6\beta^{q+1}\eta^{\frac{q^2+q}{2}}y_1 y_2
         + 2\beta\eta^{q^2}y_2^2  - 2\eta^{\frac{3q-1}{2}}y_1^3 + 4\beta^{q}\eta^{\frac{q^2+2q-1}{2}}y_1^2 y_2 ,
        \end{split}
\]
define an algebraic function field $\fqq(y_1,w_0,y_2,w_1)/\mathbb{F}_{q^4}$ of transcendence degree $2$ over $\fqq$, with constant field $\fqq$; this will imply that $\fqq(y_1,w_0,y_2,w_1,x_3)$ is a function field of three variables over $\fqq$ with constant field $\fqq$, where $x_3$ is transcendent over $\fqq(y_1,w_0,y_2,w_1)$.

As shown in Section \ref{sec:primaest}, the first equation of \eqref{eq:lacomp} defines a Kummer extension $\fqq(y_1,w_0)/\fqq(y_1)$ of degree $2$ and transcendence degree $1$ over $\fqq$ with constant field $\fqq$.
Let $y_2$ be transcendent over $\fqq(y_1,w_0)$.
Then, for some non-square $\eta$ of $\fqq$, the function $\eta^q y_1^2 + \eta^{q^2} y_2^2 - 2\beta^q\eta^{\frac{q^2+q}{2}}y_1 y_2$ is not a square in $\KK(y_1,w_0,y_2)$, because for some $k\in\KK$ its specialization with $y_2=y_1+k$ is not a square in $\KK(y_1,w_0)$ as shown in Section \ref{sec:secondaest}.
Thus, the second equation of \eqref{eq:lacomp} defines a Kummer extension $\fqq(y_1,w_0,y_2,w_1)/\fqq(y_1,w_0,y_2)$ of degree $2$ and transcendence degree $2$ over $\fqq$ with constant field $\fqq$.
The element $A'$ is non-zero (since it has degree $1$ as a polynomial in $w_1$ over $\KK(y_1,w_0,y_2)$), and hence the element $y_3\in\fqq(y_1,w_0,y_2,w_1)$ is well-defined by the third equation of \eqref{eq:lacomp}.
From $w_0+w_1\ne0$ follows that the element $w_2\in\fqq(y_1,w_0,y_2,w_1)$ is well-defined by the fourth equation of $\eqref{eq:lacomp}$.
The claim on the extension $\fqq(y_1,w_0)/\fqq(y_1)$ is proved.

\item We show that the intersection between $\mathcal{V}$ and the hyperplane $W_0+W_1=0$ contains exactly one absolutely irreducible component $\tilde{\mathcal{V}}$ of dimension $3$, which is also the unique absolutely irreducible component of dimension $3$ in the intersection between $\mathcal{V}$ and the hyperplane $Y_0-\beta^{q-1}\eta^{\frac{q^2-1}{2}}Y_2=0$. Moreover, $\tilde{\mathcal{V}}$ has degree $4$.

Indeed, suppose $W_0+W_1=0$. Then either $Y_1-\beta^{q-1}\eta^{\frac{q^3-q}{2}}Y_3=0$ or $Y_0-\beta^{q-1}\eta^{\frac{q^2-1}{2}}Y_2=0$.
    \begin{itemize}
        \item Suppose $Y_1-\beta^{q-1}\eta^{\frac{q^3-q}{2}}Y_3=0$. Then the solutions of \eqref{eq:V2} are given by the following systems:
\begin{small}
        \[
        \begin{cases}
        W_0^2=\eta Y_0^2 + \eta^{q} Y_1^2 - 2\beta\eta^{\frac{q+1}{2}}Y_0 Y_1,\\
        W_1=-W_0,\\
        Y_3=\beta^{q-1}\eta^{\frac{q-q^3}{2}}Y_1,\\
        Y_2=\beta^{q-1}\eta^{\frac{1-q^2}{2}}Y_0,\\
        W_2=W_0;\\
        \end{cases}
        \begin{cases}
        W_0^2=\eta Y_0^2 + \eta^{q} Y_1^2 - 2\beta\eta^{\frac{q+1}{2}}Y_0 Y_1,\\
        W_1=-W_0,\\
        Y_3=\beta^{q-1}\eta^{\frac{q-q^3}{2}}Y_1,\\
        Y_2=\beta^{q-1}\eta^{\frac{1-q^2}{2}}Y_0,\\
        W_2=-W_0;\\
        \end{cases}
        \]
                \[
        \begin{cases}
        W_0^2=\eta Y_0^2 + \eta^{q} Y_1^2 - 2\beta\eta^{\frac{q+1}{2}}Y_0 Y_1,\\
        W_1=-W_0,\\
        Y_3=\beta^{q-1}\eta^{\frac{q-q^3}{2}}Y_1,\\
        Y_2=-\beta^{q-1}\eta^{\frac{1-q^2}{2}}Y_0+2\beta^q\eta^{\frac{q-q^2}{2}}Y_1,\\
        W_2=W_0;\\
        \end{cases}
        \begin{cases}
        W_0^2=\eta Y_0^2 + \eta^{q} Y_1^2 - 2\beta\eta^{\frac{q+1}{2}}Y_0 Y_1,\\
        W_1=-W_0,\\
        Y_3=\beta^{q-1}\eta^{\frac{q-q^3}{2}}Y_1,\\
        Y_2=-\beta^{q-1}\eta^{\frac{1-q^2}{2}}Y_0+2\beta^q\eta^{\frac{q-q^2}{2}}Y_1,\\
        W_2=-W_0.\\
        \end{cases}
        \]
\end{small}       
For each of these four systems, the following holds. The first equation defines an absolutely irreducible curve in $W_0,Y_0,Y_1$, as shown in Section \ref{sec:primaest}. Each of the remaining four equations is linear and defines an absolutely irreducible curve with an additional indeterminate; in terms of function fields, $\KK(w_0,y_1)$ remains the same. Since the indeterminate $X_3$ does not appear, the system defines an absolutely irreducible surface contained in $\mathcal{V}\subset\mathbb{P}^7$, namely a cone with vertex $(0:0:0:1:0:0:0:0)$; its function field is $\KK(w_0,y_1,x_3)$, where $x_3$ is transcendental over $\KK(w_0,y_1)$.

        \item Suppose $Y_0-\beta^{q-1}\eta^{\frac{q^2-1}{2}}Y_2=0$.
        By direct computation, \eqref{eq:V} reads
\[        \tilde{\mathcal{V}}\colon\begin{cases}
W_0^2=\eta Y_0^2 + \eta^{q} Y_1^2 - 2\beta\eta^{\frac{q+1}{2}}Y_0 Y_1,\\
W_2^2=\eta Y_0^2 + \eta^{q^3}Y_3^2 - 2\beta^{q}\eta^{\frac{q^3+1}{2}}Y_0Y_3,\\
Y_2=\beta^{q-1}\eta^{\frac{1-q^2}{2}}Y_0,\\
W_1=-W_0.\\
\end{cases}
\]
An argument analogous to the one used in Section \ref{sec:lemmairred} shows that $\tilde{\mathcal{V}}$ is absolutely irreducible. In fact, we have
\[\eta Y_0^2 + \eta^{q} Y_1^2 - 2\beta\eta^{\frac{q+1}{2}}Y_0 Y_1=\eta^q(Y_1-\lambda_1 Y_0)(Y_1-\lambda_2 Y_0)
\]
with $\lambda_1\ne\lambda_2$. After the specialization $Y_3=hY_1$ with $h\in\KK$, write
\[
(\eta Y_0^2 + \eta^{q^3}Y_3^2 - 2\beta^{q}\eta^{\frac{q^3+1}{2}}Y_0Y_3)\mid_{Y_3=hY_1}=\eta^{q^3}(Y_3-h\mu_1Y_0)(Y_3-h\mu_3Y_0).
\]
For a suitable $h\in\KK$, the values $h\mu_1$ and $h\mu_2$ are different from each other and different from $\lambda_1$ and $\lambda_2$.
Therefore, recalling that the indeterminate $X_3$ does not appear explicitly, the intersection between $\tilde{\mathcal{V}}$ and the hyperplane $Y_3=hY_1$ is an absolutely irreducible surface, namely a cone with vertex $(0:0:0:1:0:0:0:0)$, and hence $\tilde{\mathcal{V}}$ is an absolutely irreducible variety with $\dim(\tilde{\mathcal{V}})=3=\dim(\mathcal{V})$.
Hence, $\tilde{\mathcal{V}}$ is an absolutely irreducible component of $\mathcal{V}$, and it is readily seen that $\tilde{\mathcal{V}}$ is the unique $3$-dimensional component of $\mathcal{V}$ contained in the hyperplane $Y_0-\beta^{q-1}\eta^{\frac{q^2-1}{2}}Y_2=0$.
Clearly $\tilde{\mathcal{V}}$ is a complete intersection, whence $\deg(\tilde{\mathcal{V}})=4$.
\end{itemize}

\item With the same arguments as in the previous point, it is easily seen that the intersection between $\mathcal{V}$ and the hyperplane $W_1+W_2=0$ contains exactly one absolutely irreducible component $\hat{\mathcal{V}}$ of dimension $3$, given by
\[        \hat{\mathcal{V}}\colon\begin{cases}
W_0^2=\eta Y_0^2 + \eta^{q} Y_1^2 - 2\beta\eta^{\frac{q+1}{2}}Y_0 Y_1,\\
W_1^2=\eta^q Y_1^2 + \eta^{q^2} Y_2^2 - 2\beta^q\eta^{\frac{q^2+q}{2}}Y_1 Y_2,\\
Y_3=\beta^{q-1}\eta^{\frac{q-q^3}{2}}Y_1,\\
W_2=-W_1,\\
\end{cases}
\]
which is also the unique absolutely irreducible component of dimension $3$ in the intersection between $\mathcal{V}$ and the hyperplane $Y_1-\beta^{q-1}\eta^{\frac{q^3-q}{2}}Y_3=0$.
Moreover, $\hat{\mathcal{V}}$ has degree $4$.

\item We show that $\mathcal{V}$ has exactly another $3$-dimensional absolutely irreducible component $\mathcal{V}^{\prime}$ other than $\tilde{\mathcal{V}}$ and $\hat{\mathcal{V}}$, having degree $8$.

Indeed, by what has been shown for $\tilde{\mathcal{V}}$, we can assume that $W_0+W_1$ does not vanish identically, so that the fourth equation of \eqref{eq:V2} yields
\[
w_2=\beta\eta^{\frac{q+1}{2}}\frac{(y_1-\beta^{q-1}\eta^{\frac{q^3-q}{2}}y_3)(y_0-\beta^{q-1}\eta^{\frac{q^2-1}{2}}y_2)}{w_0+w_1}-w_1.
\]
After replacing $w_2$ and deleting the denominator $w_0+w_1$, the third equation of \eqref{eq:V2} gives
\[
    (y_1 - \beta^{q-1}\eta^{\frac{q^3-q}{2}}y_3)\cdot (A\cdot y_3+B) =0,
    \]
    where
    \[
    \begin{split}
    A = & - 2\beta^{q-1}\eta^{\frac{q^3-1}{2}} w_0 w_1 + \beta^{q-1}(\beta^2-1)\eta^{\frac{q^3+1}{2}}y_0^2 + 2\beta^{q}\eta^{\frac{q^3+q}{2}}y_0 y_1 - 2\beta^{2}\eta^{\frac{q^3+q^2}{2}}y_0 y_2 \\
    & - 2\beta^{q-1}\eta^{\frac{q^3+2q-1}{2}}y_1^2  + 2\beta\eta^{\frac{q^3+q^2+q-1}{2}}y_1 y_2 + \beta^{q-1}(\beta^2-1)\eta^{\frac{q^3+2q^2-1}{2}}y_2^2,\\
    B = & \, 2(\beta y_0 - \eta^{\frac{q-1}{2}}y_1 + \beta^{q}\eta^{\frac{q^2-1}{2}}y_2)w_0w_1 - (\beta^2+1)\eta^{\frac{q+1}{2}}(y_0^2 y_1  +\eta^{q^2-1}y_1 y_2^2 ) + 2\beta^{q}\eta^{\frac{q^2+1}{2}}y_0^2 y_2 \\
    &  + 4\beta\eta^{q}y_0 y_1^2 - 6\beta^{q+1}\eta^{\frac{q^2+q}{2}}y_0 y_1 y_2
         + 2\beta\eta^{q^2}y_0 y_2^2  - 2\eta^{\frac{3q-1}{2}}y_1^3 + 4\beta^{q}\eta^{\frac{q^2+2q-1}{2}}y_1^2 y_2 .
        \end{split}
    \]
    By what has been shown for $\hat{\mathcal{V}}$, we can assume that $y_1 - \beta^{q-1}\eta^{\frac{q^3-q}{2}}y_3$ does not vanish identically, so that the third equation of \eqref{eq:V2} can be replaced by the equation $A\cdot y_3+B=0$.
    Therefore, every $3$-dimensional absolutely irreducible component of $\mathcal{V}$ other than $\tilde{\mathcal{V}}$ and $\hat{\mathcal{V}}$ is contained in the variety $\mathcal{V}^{\prime}$ whose function field is defined by
    \[
              \begin{cases}
        w_0^2=\eta y_0^2 + \eta^{q} y_1^2 - 2\beta\eta^{\frac{q+1}{2}}y_0 y_1,\\
        w_1^2 = \eta^q y_1^2 + \eta^{q^2} y_2^2 - 2\beta^q\eta^{\frac{q^2+q}{2}}y_1 y_2,\\
        y_3=-B/A,\\
        w_2= \beta\eta^{\frac{q+1}{2}}\frac{(y_1-\beta^{q-1}\eta^{\frac{q^3-q}{2}}y_3)(y_0-\beta^{q-1}\eta^{\frac{q^2-1}{2}}y_2)}{w_0+w_1}-w_1.\\
        \end{cases}
    \]
    Note that the system \eqref{eq:lacomp} defines exactly the coordinate functions of $\mathcal{V}^{\prime}$ after dehomogenizing the equations with respect to $Y_0$. Therefore, by what has been shown above for $\fqq(y_1,w_0,y_2,w_1,x_3)$, $\mathcal{V}^{\prime}$ is absolutely irreducible and has dimension $3$.
    Finally, we use the fact that the degree of $\mathcal{V}$ is the sum of the degrees if its absolutely irreducible components of maximal dimension (see \cite[Proposition 7.6 (b)]{Hart}) to conclude that \[\deg(\mathcal{V}^{\prime})=\deg(\mathcal{V})-\deg(\tilde{\mathcal{V}})-\deg(\hat{\mathcal{V}})=8.\]
\end{itemize}

\section{Conclusions and open problems}\label{sec:open}

In this paper we have considered for $n=8$ the rank metric codes $\mathcal{C}_{\delta,s}\subseteq\mathcal{L}_{n,q}$, where $\delta\in\mathbb{F}_{q^n}^*$ and $s$ is coprime with $n/2$.
We have given a partial answer to Conjecture 4.6 in \cite{PZZ}, proving its validity under the assumption that $q$ is odd and $q\geq1039891$.

The following questions naturally arise, and we list them as open problems. A solution to them would complete the classification of MRD codes $\mathcal{C}_{\delta,s}$ in $\mathcal{L}_{n,q}$ for any even positive integer $n$ and prime power $q$.

\begin{enumerate}
    \item Let $n\geq10$, and suppose that $n<4s+1$, so that the assumptions of \cite[Theorem 4.5]{PZZ} do not hold. Is it still possible to classify the codes $\mathcal{C}_{\delta,s}\subseteq\mathcal{L}_{n,q}$ which are MRD?
    \item Let $n=8$ and $q$ be an odd prime power with $q<1039891$. Classify the codes $\mathcal{C}_{\delta,s}\subseteq\mathcal{L}_{8,q}$ which are MRD: is it true that $\mathcal{C}_{\delta,s}$ is MRD if and only if $\mathrm{N}_{q^8/q^4}(\delta)=-1$? For $q\leq11$ this is true; see \cite[Remark 7.4]{CMPZ}.
    \item Let $n=8$ and $q$ be an even prime power. Classify the codes $\mathcal{C}_{\delta,s}\subseteq\mathcal{L}_{8,q}$ which are MRD: is it true that $\mathcal{C}_{\delta,s}$ is never MRD? For $q\leq8$ this is true; see \cite[Remark 7.4]{CMPZ}.
\end{enumerate}

In order to deal with the point (3), we have tried to apply algebraic geometric techniques, similar to the ones used in this paper. More precisely, a result analogous to Proposition \ref{prop:condX} holds for $q$ even after replacing the curve $\mathcal{X}_{\delta,s}$ in \eqref{eq:curveqodd_withs} with a suitable plane curve $\mathcal{X}_{\delta,s}^{even}$, as shown in \cite[Section 3.2]{PZZ}.
In analogy with Equation \eqref{eq:espressioni} and the variety $\mathcal{V}$ in this paper, we have then considered the $q$-powers of the indeterminates of $\mathcal{X}_{\delta,s}^{even}$ as the indeterminates of a higher-dimensional variety $\mathcal{V}_{even}$.
However, we have not been able to decide whether $\mathcal{V}_{even}$ contains an absolutely irreducible rational component, although computational experiments for small values of $q$ suggest that $\mathcal{V}_{even}$ may be absolutely irreducible.

\section{Acknoledgements}

The authors thank the anonymous referees for the valuable comments that helped to improve the paper.  The first author was funded by the project ``Metodi matematici per la firma digitale ed il cloud computing" (Programma Operativo Nazionale (PON) “Ricerca e Innovazione” 2014-2020, University of Perugia).
The second author was funded by the project ``Attrazione e Mobilità dei
Ricercatori'' Italian PON Programme (PON-AIM 2018 num. AIM1878214-2).
This research was supported by the project ``VALERE: VAnviteLli pEr la RicErca" of the University of Campania ``Luigi Vanvitelli'', and by the Italian National Group for Algebraic and Geometric Structures and their Applications (GNSAGA - INdAM).
Data sharing is not applicable to this article as no datasets were generated or analysed during the current study.

\end{document}